\newcommand\C{{\mathbb C}}
\newcommand\Q{{\mathbb Q}}
\newcommand\R{{\mathbb R}}
\newcommand\N{{\mathbb N}}
\newcommand\Z{{\mathbb Z}}
\newcommand\DD{{\mathcal D}}
\newcommand\LL{{\mathcal L}}
\newcommand\NN{{\mathcal{N}}}
\newcommand\ord[1]{\mathrm{ord}(#1)}
\DeclareMathOperator{\RotOp}{Rot}
\newcommand\Rot[2]{\RotOp^{#2}\left[#1\right]}
\newcommand\al{\alpha}
\newcommand\be{\beta}
\newcommand\ga{\gamma}
\newcommand\eps{\varepsilon}
\numberwithin{equation}{section}
\numberwithin{figure}{section}
\numberwithin{table}{section}
\newcommand\variance[1]{\text{var}\left(#1\right)}
\newtheorem{theorem}{Theorem}[section]
\newtheorem{lemma}[theorem]{Lemma}
\newtheorem{corollary}[theorem]{Corollary}
\newtheorem{proposition}[theorem]{Proposition}
\newtheorem{conjecture}[theorem]{Conjecture}
\theoremstyle{definition}
\newtheorem{definition}[theorem]{Definition}
\newtheorem{example}[theorem]{Example}
\theoremstyle{remark}
\numberwithin{equation}{section}
\newcommand{\abs}[1]{\lvert#1\rvert}
\begin{document}

\title[Zeros of $\{0, 1\}$  and $\{-1, 1\}$ polynomials inside disk]{On Newman and Littlewood polynomials with prescribed number of zeros inside the unit disk}

\author{Kevin G. Hare}
\address{Department of Pure Mathematics, University of Waterloo, Waterloo, Ontario, Canada N2L 3G1}
\email{kghare@uwaterloo.ca}
\thanks{The research of Kevin G. Hare is supported in part by NSERC grant 2019-03930}

\author{Jonas Jankauskas}
\address{Mathematik und Statistik, Montanuniversität Leoben, Franz Josef Stra{\ss}e 11, 8700 Leoben, Austria}
\email{jonas.jankauskas@gmail.com}

\thanks{}

\thanks{The post-doctoral position of Jonas Jankauskas is supported by the Austrian Science Fund (FWF) project M2259 Digit Systems, Spectra and Rational Tiles under the Lise Meitner Program.}

\subjclass[2010]{11R06, 11R09, 11B83, 11Y99, 12D10, 26C10, 30C15, 65H04, 93A99}

\keywords{Newman polynomials, Littlewood polynomials, complex Pisot numbers, zero location, unit disk}

\begin{abstract}
We study $\{0, 1\}$ and $\{-1, 1\}$ polynomials $f(z)$, called Newman and Littlewood polynomials, that have a prescribed number $N(f)$ of zeros in the open unit disk $\DD = \{z \in \C: \abs{z} < 1\}$. For every pair $(k, n) \in \N^2$, where $n \geq 7$ and $k \in [3, n-3]$, we prove that it is possible to find a $\{0, 1\}$--polynomial $f(z)$ of degree $\deg{f}=n$ with non--zero constant term $f(0) \ne 0$, such that $N(f)=k$ and $f(z) \ne 0$ on the unit circle $\partial\DD$.  On the way to this goal, we answer a question of D.~W.~Boyd from 1986 on the smallest degree Newman polynomial that satisfies $\abs{f(z)} > 2$ on the unit circle $\partial \DD$. This polynomial is of degree $38$ and we use this special polynomial in our constructions.  We also identify (without a proof) all exceptional $(k, n)$ with $k \in \{1, 2, 3, n-3, n-2, n-1\}$, for which no such $\{0, 1\}$--polynomial of degree $n$ exists: such pairs are related to regular (real and complex) Pisot numbers.

Similar, but less complete results for $\{-1, 1\}$ polynomials are established. We also look at the products of spaced Newman polynomials and consider the rotated large Littlewood polynomials. Lastly, based on our data, we formulate a natural conjecture about the statistical distribution of $N(f)$ in the set of Newman and Littlewood polynomials.
\end{abstract}

\maketitle

\section{Introduction}\label{intro}

Throughout the paper, we will write
\begin{equation}\label{genForm}
f(z) = a_0 + a_1 z + \dots + a_n z^n
\end{equation}
for a polynomial with coefficients $a_j \in \Z$, $\R$ or $\C$; we will assume that the highest order term $a_n \ne 0$, so that $\deg{f} = n$. We are interested in two important classes of polynomials $f(z)$ whose coefficients are restricted to very small sets containing only two integer numbers, namely, Newman and Littlewood polynomials of degree $n$:
\begin{equation}\label{defNL}
\begin{array}{rcl}
\NN_n &=& \{f(z) \in \Z[z]: a_0 = a_n = 1, a_j \in \{0, 1\}, 1 \leq j \leq n\},\\
           &  &\\
\LL_n  &=& \{f(z) \in \Z[z]: a_j \in \{-1,1\},  1 \leq j \leq n\}.
\end{array}
\end{equation}
The set of all possible Newman and Littlewood polynomials, named after \cite{New1, New2} and \cite{Littl1, Littl2, Littl3, Littl4, Littl5}, respectively, will be denoted
\[
\NN = \bigcup_{n=1}^{\infty}{\NN_n}, \qquad \LL = \bigcup_{n=1}^{\infty}{\LL_n}.
\]
As any polynomial $f(z) \in \C[z]$, Newman and Littlewood polynomials factor over $\C$ into
\begin{equation}\label{eqFactorizes}
f(z) = a_n(z-\al_1)\cdots (z-\al_n),
\end{equation}
where the complex zeros $\al_1, \al_2, \dots, \al_n$ of $f(z)$ are not necessarily distinct.

The sets of numbers that are the zeros of Newman and Littlewood polynomials have received a lot of attention in the contemporary mathematical literature. Despite the deceiving simplicity of $\NN$ and $\LL$, the zero sets exhibit complicated, fractal--like geometry \cite{BEL, BoPi, OdlPoo, Solo}. For number theorists, they are valuable sources of algebraic integers of small height \cite{BoDoMo, BoHaMo, MoPiVa} and algebraic integers with special properties, like real or complex Pisot and Salem numbers \cite{G1, G2, Mu1, Mu2, Mu3}. The set structure and the relationship between the zero sets of polynomials $\NN$ and $\LL$ is rather intricate, see \cite{DJ, DJS, DJKJ, Saund}. Due to pioneering work by Schur, Szeg\H{o}, Erd\H{o}s, Tur\'{a}n, Bloch, P\'{o}lya, Littlewood, Offord, subsequent contributions by Borwein, Boyd, Erd{\'e}lyi and many other authors, the key properties of these zero sets have been determined. For instance, the angular equidistribution property \cite{AmoMig, Erd3, ET, Gan, Mign, Sound} of zeros, the range of absolute values of zeros compared to their argument angle or zero multiplicity \cite{BBBP1, BBBP2, Solo}. Asymptotic bounds  for the number of real zeros \cite{BEK, BloPoy, Erd2, Sch, Sze}, their expected  values \cite{LitOff1, LitOff2, EO} are available; as well as the concentration inequalities  for the probability to have a zero at a fixed point \cite{LitOff3}; also asymptotic estimates on the number of zeros in circular and polygonal regions \cite{BE1, BEL, Erd1}; bounds \cite{Amor} for the maximal vanishing multiplicity at roots of unity (especially at point $z=1$, see \cite{BoMo1, B7, B8, Dub1, Dub2}), etc.

Let us denote the open unit disk of radius one, centered at the origin and its circular boundary by
\[
\DD = \{z \in \C, \abs{z} < 1\}, \qquad \text{ and } \qquad \partial\DD=\{z \in \C: \abs{z}=1\}, 
\] 
respectively. We will refer to complex numbers $z \in \partial\DD$ as \emph{unimodular}. Recall that  a \emph{reciprocal polynomial} of $f(z) \in \C[z]$ is defined by $f^*(z) := z^{\deg{f}}\overline{f}(z^{-1})$, or $f^*(z) = \overline{a}_n + \overline{a}_{n-1}z+\dots+\overline{a}_0z^n.$

When $f(z) \in \R[z]$, the reciprocation simply reflects the coefficients of $f(z)$ backwards.  Whenever $f^*(z)=f(z)$,  $f(z)$ is called \emph{self-reciprocal} (or \emph{symmetric}, if $f(z) \in \R[z]$).  We call a polynomial $f(z)$ \emph{negative self--reciprocal} (\emph{antisymmetric}, resp.), whenever  $f^*(z)=-f(z)$. Polynomials that satisfy $f^*(z)=\pm f(-z)$ are referred as \emph{skew--reciprocal} (resp. \emph{skew--symmetric}). For unimodular $z=e^{it}$, $t \in [0, 2\pi)$ and self--reciprocal $f(z)$ of even degree $n=2l$,
\[
f(e^{it})e^{-ilt/2} =a_{l}+2a_{l-1}\cos{t}+\dots+2a_0\cos{(lt)}.
\]
Similar trigonometric expressions exist for self--reciprocal and negative self--reciprocal polynomials of even and odd degrees. Due to these relations, the unimodular zeros of self--reciprocal polynomials are of  interest for harmonic analysts, especially in conjunction with topics of the positivity and $L^p$--norm estimation, see, for instance, \cite{Erd6, Littl2, Littl4, Littl5}. For this purpose, let us define the function
\[
U(f) := \# \{j, 1 \leq j \leq n: |\al_j| = 1\},
\] that counts the unimodular zeros of $f(z)$ in Eq. \eqref{eqFactorizes}. For simplicity, each zero will be counted the number of times it is repeated in equation \eqref{eqFactorizes}; although the subject of repeated zeros will occur through the paper briefly and only in a very special situation.

Over last $15$ years, many results have been established on $U(f)$ for polynomials $f \in \NN$ and $\LL$. The first essential fact, that $U(f) \geq 1$ holds for all self-reciprocal $\{0, 1\}$ and $\{-1, 1\}$ polynomials, was proved (using different methods) by Konyagin and Lev \cite{KonLev}, Kovalina and Matache \cite{KovMat}, Mercer \cite{Mer} and Erd\'elyi \cite{Erd2}. For self--reciprocal $f \in \NN \cup \LL$ of odd degree $\geq 3$,  Mukunda \cite{Mu1} raised the lower bound to $U(f) \geq 3$. Later, specifically for $f \in \LL_n$, Drungilas improved the lower bound to $U(f) \geq 4$ for $n=2l \geq 14$ and $U(f) \geq 5$ for $n =2l+1\geq 7$. Conrey, Granville, Poonen and Soundararajan \cite{CGPS} proved that, for Fekete polynomials $f_p \in \LL_p$ of prime degree $p$,  $U(f_p)=\kappa_0p$, where the constant factor $\kappa_0 \in [0.5006, 0.5008]$. In the opposite direction, Mercer proved \cite{Mer} that $U(f)=0$ for skew--symmetric $f \in \LL_n$. Smyth \cite{Smy1} and Boyd \cite{B8} proved the existence of  Newman polynomials with $U(f)=0$ for every $n > n_0$; Mercer  \cite{Mer2, Mer3} established that one can take $n_0 =2$ and calculated the proportion of Newman polynomials with $3$ and $4$ terms that satisfy $U(f) \ne 0$. Dubickas proved that certain $f \in \NN_n$ have $U(f)=0$ in \cite{Dub3}. By using the probabilistic method, Borwein, Erd\'elyi, Fergusson and Lockhart \cite{BEFL} proved that there exists infinitely many Newman polynomials with $U(f)$ as low as $O(n^{5/6}\log{n})$, settling a question posed by Littlewood in \cite{Littl5} negatively.

Very recently, new major results on $U(f)$ appeared.  Erd{\'e}lyi \cite{Erd4} proved that, for any sequence of self-reciprocal polynomials $\{f_j\}$ of increasing degree with coefficients from arbitrary finite subset $\mathcal{S} \subset \R$, one has $\lim_{j\to \infty}U(f_j)=+\infty$ whenever $\lim_{j \to \infty} \abs{f_j(1)} = +\infty$ (which is exactly the case for Newman polynomials with incresing number of non-zero terms, but not for Littlewood polynomials). In the earlier paper of Borwein and Erd\'elyi \cite{BE2} the same result was shown to hold, under additional restriction that the coefficients of $f_j(z)$ do not constitute an ultimately periodic sequence. Almost simultaneously when Erd\'elyi announced his result \cite{Erd4}, Sahasrabudhe \cite{Saha} achieved a breakthrough by establishing an effective inequality $U(f) \geq c \left(\log{\log{\log{\abs{f(1)}}}}\right)^{1/2 - \eps}-1$, for any $f$ with coefficients from a finite set $\mathcal{S} \subset \Z$ and effective constant $c=c(\mathcal{S})$. Erd\'elyi \cite{Erd5} quickly improved $1/2$ in the exponent of Sahasrabudhe's bound to $1$ by combining the strengths of different methods used in \cite{BE2, Erd2, Saha}. For a more in--depth account refer to \cite{Erd6}.

Let us now define another function
\[
N(f) := \# \{j, 1 \leq j \leq n: |\al_j| < 1\}
\] 
that counts (with multiplicities) the number of zeros of a polynomial $f(z) \in \C[z]$ in equation \eqref{genForm} in the open unit disk. Note that, in general, one always has $N(f)+N(f^*)+U(f) = n.$

In \cite{BCFJ}, Borwein, Choi, Fergusson and the second named author of the present paper initiated the study of the $U(f)$ and $N(f)$ for the class of Littlewood polynomials. In Problem 1.1  \cite{BCFJ}, they asked for which pair of $(k, n)$ with $1 \leq k \leq n-1$, one can find a polynomial $f(z) \in \LL$, such that $N(f)=k$ and $U(f)=0$? (As it is explained in \cite{BCFJ}, the cases $k=0$, $k=n$ are trivial.) Taking a similar approach as in \cite{MoPiVa}, authors of \cite{BCFJ} looked at the Littlewood polynomials that can be obtained from the simplest \emph{sum--of--geometric progression} polynomial by a simple perturbation (one sign change, or one term negation):
\[
1+z+\dots+z^k-z^{k+1}\dots -z^n, \qquad 1+\dots+z^k-z^{k+1}+z^{k+1}+\dots+z^n,
\] and calculated $N(f)$ and $U(f)$ for these perturbed Littlewood polynomials.

In the present paper, we will continue to study the problem posed in \cite{BCFJ} of constructing polynomials with restricted coefficients that have prescribed $N(f)$ and $U(f)$. In addition to $\LL$, we will also look at the class $\NN$  of Newman polynomials. It is convenient to introduce the following notation.

\begin{definition}\label{defAdm}
A pair $(k, n) \in \N^2$, where $1 \leq k \leq n-1$, will be called \emph{a valid pair}. A valid pair $(k, n)$ is \emph{Newman--admissible}, if there exist a polynomial $f(z) \in \NN_n$, not vanishing on the unit circle $\abs{z}=1$, such that $N(f)=k$. Likewise, a valid pair $(k, n) \in \N^2$ is \emph{Littlewood--admissible} if there exist a polynomial $f(z) \in \LL_n$, such that  $U(f)=0$, $N(f)=k$. We say that this polynomial $f(z)$ \emph{realizes} that particular pair $(k, n)$.
\end{definition}

Instead of perturbing cyclotomic polynomials like in \cite{BCFJ}, our main method will be to look for $\{0, 1\}$ and $\{-1, 1\}$ coefficient patterns that produce polynomials $f \in \NN$ or $\LL$ with predictable changes in $N(f)$ and $U(f)$ as the length of the pattern increases. In some cases, such patterns will be constructed explicitly from polynomials $f(z)$ that stay large on $\partial\DD$. We make a heavy use of computers to conduct large scale pattern searches and even to prove the $N(f)$ and $U(f)$ formulas for certain infinite sequences of polynomials $f(z)$.

We conclude this section with two comments to justify the normalizations we made in the admissibility problem under the consideration.

Our first remark is about the treatment of $z=0$ in the Newman case. Notice that, according to our definition of $\NN_n$ \eqref{defNL}, Newman polynomials  $f(z)$ have constant terms $a_0=1$.  Allowing $f(0)=0$ would make the problem of manufacturing $f \in \NN$ with prescribed $N(f)=k$ much easier. However, polynomials $f(z) \in \NN$ obtained this way would be of a little value in applications, like producing algebraic integers with a prescribed number of conjugates in $\DD$, or for the purpose of $L^p$ norm estimates: any non--zero $\{0, 1\}$ polynomial factors as $f(z)=z^mg(z)$, where $m \in \N$ and $g \in \NN$,  satisfy $g(0) \ne 0$. All the interesting information about $\abs{f(z)}$ comes from $g(z)$. Removing `a singularity' at $z=0$ from the set of zeros of $\{0, 1\}$ also has other benefits: it makes $\NN_n$ invariant under inversion and symmetrizes the distributions of $N(f)$, as described in Section \ref{sec_stats}.

Our second remark concerns the condition $U(f)=0$, (introduced in \cite{BCFJ}) that we continue to use in our Definition \ref{defAdm}. This condition arises  mainly from our current deficiency to handle zeros on $\partial\DD$. In the classical control theory, polynomials with unimodular roots are called singular: all root--counting methods, such as Schur-Cohn rule, Jury rule or Bistritz rule, singular polynomials require special, out--of--flow treatment \cite{Bis}. From the results of \cite{BE2, Erd2, Erd5, Erd6, Saha} that were discussed above, it is clear that presently there are no known simple method to control $U(f)$ in Newman and Littlewood polynomials with the same finesse as $N(f)$.

\newpage

\section{Main results}\label{sec_main}

\subsection{Newman polynomials}\label{sec_Newman}

First we take a careful look into the set of Newman polynomials.  Our principal result for the class $\NN$ is that \emph{most} valid pairs $(k, n) \in \N^2$ are Newman-admissible.

\begin{theorem}\label{mainNewmanThm}
Every valid pair $(k, n)$ with $n \geq 7$ and $k \in [3, n-3]$ is Newman--admissible.
\end{theorem}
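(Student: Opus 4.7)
The strategy is to exploit Boyd's degree-$38$ Newman polynomial $B(z)$ with $c := \min_{\abs{z}=1}\abs{B(z)} > 2$ as a ``large'' building block in a Rouch\'e-style argument. If $P(z)$ is any Newman polynomial whose modulus on $\partial\DD$ is strictly dominated by $\abs{B(z)}$, then shifting $B$ by a monomial factor $z^{j}$ and adding $P(z)$ produces a polynomial whose zero-count inside $\DD$ can be read off from the $B$-piece alone, while automatic non-vanishing on $\partial\DD$ comes for free.

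Concretely, the core family I would analyze is
\[
f(z) \;=\; P(z) + z^{j} B(z^{q}),
\]
where $q\ge 1$, $j > \deg P$, and $P\in\NN$ has fewer than $c$ nonzero terms, so that $\abs{P(z)}<c\le \abs{z^{j}B(z^{q})}$ on $\partial\DD$. Because $B(z^{q})$ is supported on multiples of $q$ shifted up by $j$ while $P$ lives in degrees below $j$, the supports are disjoint and $f\in\NN_{n}$ for $n=j+38q$. Rouch\'e then gives $N(f) = N(z^{j}B(z^{q})) = j + qb$, with $b = N(B)$, and shows $f(z)\ne 0$ on $\partial\DD$. Given a target $(k,n)$, one tries to solve $j+38q=n$, $j+qb=k$ in positive integers, which forces $q = (n-k)/(38-b)$ and $j = (38k-bn)/(38-b)$: this yields a polynomial whenever $n-k$ is a positive multiple of $38-b$ and the resulting $j$ satisfies $j\ge 1+\deg P$.

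To reach the remaining residue classes of $n-k$ modulo $38-b$, I would build a finite catalogue of small Newman ``offsets'' $P_{0}(z)$ whose pairs $(\deg P_{0},\,N(P_{0}))$ cover every residue mod $38-b$, and form the spaced product $P_{0}(z)\,B(z^{q})$, which is Newman whenever $q > \deg P_{0}$, has degree $\deg P_{0}+38q$, and contributes $N(P_{0})+qb$ zeros inside $\DD$. The reflection symmetry $f\mapsto f^{*}$ sends $(k,n)\mapsto(n-k,n)$, which halves the workload and allows one to focus on $k\le n/2$. Pairs $(k,n)$ with $n$ below the threshold where these infinite families start covering are handled by a systematic computer search: enumerate every $f\in\NN_{n}$, count zeros in $\DD$ by a standard root-isolation routine, and certify non-vanishing on $\partial\DD$ via a Schur--Cohn test.

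The main obstacle is completeness, not existence of individual realizations: the Rouch\'e families produce pairs on arithmetic progressions in the $(k,n)$-plane, and one must show that the catalogue of offsets together with the reflection symmetry fills in the entire triangle $\{(k,n): n\ge 7,\ 3\le k\le n-3\}$. The rows $k=3$ and $k=n-3$ are the most delicate, since they sit next to the exceptional Pisot regime flagged in the abstract; I would expect those edges, together with a finite range of small $n$, to require ad hoc constructions or exhaustive verification, after which the infinite Rouch\'e-based families complete the proof.
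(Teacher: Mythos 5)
Your high-level strategy — Rouch\'e with the degree-$38$ super-polynomial $B$, combined with reflection $f \mapsto f^*$ and a finite catalogue of small cases — is indeed the skeleton of the paper's argument, and the identification of $B$ (with $N(B)=18$, $\min_{\partial\DD}\abs{B}\approx 2.018$) as the engine is correct. However, the two families you propose are too rigid to cover the triangle, and the gap is structural, not merely a matter of finite bookkeeping.

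In your first family $f(z)=P(z)+z^{j}B(z^{q})$, the degree is forced to equal $j+38q$; there is no free monomial to decouple $n$ from $(j,q)$. This locks in $n-k=(38-N(B))q=20q$, so for each $n$ you obtain only one admissible $k$ per value of $q$, all on the lines $k\equiv n\pmod{20}$. The paper's construction is $h(z)=1+z^{r}B(z)+z^{n}$ (Proposition \ref{Add2Prop}): since $\abs{1+z^{n}}\le 2<\abs{z^{r}B(z)}$ on $\partial\DD$, the extra $z^{n}$ term sets the degree independently, and sweeping $r\in[1,n-39]$ produces \emph{every} $k\in[19,n-21]$ for each fixed $n\ge 40$ — an interval, not a single point. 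That free top monomial is exactly the piece you are missing. Your second family $P_{0}(z)B(z^{q})$ does not fill this hole either: the requirement $q>\deg P_{0}$ forces $N(P_{0})\le\deg P_{0}<q$, so $k=N(P_{0})+18q\ge 18$, and moreover $U(P_{0})$ must vanish, which severely restricts the catalogue (a Newman polynomial with $U(P_{0})=0$ and small degree has very few choices). Neither family can reach any pair $(k,n)$ with $3\le k\le 17$.

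This is the second, and larger, gap: the rows $k\in[3,18]$ (and their reflections) are not a ``finite range of small $n$'' — each row must be realized for \emph{all} $n\ge k+3$, so exhaustive search cannot close them. The paper constructs, for each such $k$, an explicit \emph{infinite} family: for $6\le k\le 18$ it takes a small polynomial $f_{k}$ with $N(f_{k})=k$ and $\abs{f_{k}}>1$ on $\partial\DD$ and forms $f_{k}(z)+z^{n}$ (Proposition \ref{Add1Prop}); for $k=3,4,5$ it uses longer coefficient patterns whose zero counts are certified via the resolvent criterion and Boyd's exit-point formula (Propositions \ref{propBoyd}, \ref{ResolventLem}), because $\abs{f_{k}}>1$ fails to hold there. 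Your proposal does not contain a mechanism — neither a family nor a lemma — for producing these edge rows for arbitrarily large $n$, so the proof as sketched cannot be completed without supplying essentially this missing ingredient.
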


On the way to prove Theorem \ref{mainNewmanThm}, we will need to find a polynomial $f(z) \in \NN$ -- preferably, of the smallest possible degree -- such that $\abs{f(z)} > 2$ for every $z \in \partial\DD$. Such polynomial is
\begin{align} \label{superNewman}
\begin{array}{ll}
f(z) &= 1 + z^{4} + z^{6} + z^{7} + z^{8} + z^{10} + z^{11} + z^{12} + z^{15} + z^{16} \\
     &+ z^{17} + z^{22} + z^{24} + z^{25} + z^{26} + z^{29} + z^{32} + z^{35} + z^{38}. 
\end{array}
\end{align}
The minima of $|f(z)|$ on the unit circle is $m=2.0181\dots$. We will use this particular $f(z)$ in the proof of Theorem \ref{mainNewmanThm} to cover \emph{most} of the admissible pairs. This answers a question of Boyd from 1986 paper \cite{B6}-- see Section \ref{sec_largeMin} for more details. 

From Theorem \ref{mainNewmanThm}, it follows that for a non-admissible pair, $k$ or $n-k$ must be very small. The cases $k=1$ and $k=n-1$ were settled completely by Mukunda \cite{Mu3}; here we re-state them for the sake of completeness only.

\begin{proposition}[Mukunda]\label{Prop_Mu_Newman}
The pairs $(1, 2m+1)$ and $(2m, 2m+1)$, $m \in \N$ are Newman-admissible, while pairs $(1, 2m)$ and $(2m-1, 2m)$, $m \in \N$ are not.
\end{proposition}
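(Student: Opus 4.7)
The key observation is that $\NN_n$ is closed under the reciprocation $f \mapsto f^*$ (which simply reverses the coefficient sequence), that $U(f^*)=U(f)$, and that $N(f^*)=n-N(f)-U(f)$. Hence $(k,n)$ is Newman-admissible if and only if $(n-k,n)$ is, so the four claims of the proposition reduce to two: \emph{(i)} $(1,2m+1)$ is admissible; and \emph{(ii)} $(1,2m)$ is not admissible.

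Any realizing polynomial $f\in\NN$ of $(1,n)$ must have its unique zero in $\DD$ real and lying in the interval $(-1,0)$ -- the former because the non-real zeros of $f\in\R[z]$ pair up under complex conjugation, the latter because $f(x)>0$ for every $x\ge 0$ whenever $f\in\NN$. For \emph{(i)} one exhibits for each $m\ge 1$ a Newman trinomial (or short quadrinomial) $f_m\in\NN_{2m+1}$ with this property. Mukunda's construction essentially takes $f_m(z)=1+z^a+z^{2m+1}$, with $a=a(m)$ chosen to avoid the rare arithmetic coincidences that would force a zero on $\partial\DD$ (for instance, $1+z+z^n$ carries the zeros $e^{\pm 2\pi i/3}$ precisely when $n\equiv 2\pmod 3$, so one varies $a$ accordingly). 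The three required properties of $f_m$ are then verified by: the intermediate value theorem applied at $-1$ and $0$ to place the real zero in $(-1,0)$; a Bohl-type argument-principle count for the trinomial to show that the remaining $2m$ zeros lie strictly outside $\DD$; and the elementary angular fact that three unit vectors sum to zero only if they form an equilateral triangle, which, combined with a mod $3$ arithmetic check, rules out unimodular zeros.

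For \emph{(ii)}, suppose $f\in\NN_{2m}$ realizes $(1,2m)$, with unique $\DD$-zero $\alpha\in(-1,0)$. Let $g\in\Z[z]$ be the minimal polynomial of $\alpha$; then $g$ divides $f$ in $\Z[z]$, $g$ is monic with $g(0)=\pm 1$, and every Galois conjugate of $\alpha$ distinct from $\alpha$ lies outside $\overline{\DD}$. Setting $\theta=-1/\alpha>1$, one obtains a Pisot number whose minimal polynomial is, up to sign, $z^{\deg g}\,g(-1/z)$ -- a $\{-1,0,1\}$-polynomial whose nonzero coefficients alternate in sign, with absolute values inherited from the $\{0,1\}$-pattern of $g$. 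Mukunda's classification \cite{Mu3} of Pisot numbers admitting minimal polynomials of this restricted ``Newman-Pisot'' type shows that such $\theta$ exist only in odd algebraic degree, which, together with compatibility of $\deg g$ with the degree of $f$, yields the desired contradiction.

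The main obstacle is part \emph{(ii)}. The easy parity consequences alone -- for instance that $f(-1)<0$, or that $f$ carries an odd number of real zeros in $(-\infty,-1)$ -- are insufficient; the decisive input is Mukunda's structural theorem, which rules out the even-degree case via a detailed analysis of the linear recurrence satisfied by the powers of $\theta$ modulo the prescribed sign pattern of its minimal polynomial.
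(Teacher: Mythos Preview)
The paper does not supply its own proof of this proposition; it is restated from Mukunda \cite{Mu3} ``for the sake of completeness only'' and simply cited. So there is nothing in the paper to compare your argument against beyond the bare attribution.

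That said, your sketch of part \emph{(ii)} contains a genuine gap. You pass from the Newman polynomial $f\in\NN_{2m}$ to the \emph{minimal} polynomial $g\in\Z[z]$ of its unique small zero $\alpha$, and then assert that $g$ has a $\{0,1\}$ coefficient pattern (so that $z^{\deg g}g(-1/z)$ is an alternating $\{-1,0,1\}$-polynomial). This is unjustified: an irreducible factor of a Newman polynomial need not itself be a Newman polynomial, and in general is not. Consequently, your appeal to ``Mukunda's classification of Pisot numbers admitting minimal polynomials of this restricted Newman--Pisot type'' does not apply to $g$. Even if it did, the conclusion you draw concerns $\deg g$, not $\deg f=2m$; your phrase ``compatibility of $\deg g$ with the degree of $f$'' hides a second missing argument, since $g$ may well have odd degree strictly smaller than $2m$.

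The repair is to drop the passage to $g$ entirely and work with $f$ (equivalently, with $f^*$) directly. If $f\in\NN_{2m}$ realizes $(1,2m)$, then $f^*\in\NN_{2m}$ realizes $(2m-1,2m)$: it has a single real zero $1/\alpha<-1$ outside $\overline{\DD}$ and the remaining $2m-1$ zeros in $\DD$. Mukunda's result in \cite{Mu3} is precisely about Newman polynomials (possibly reducible) with this Pisot-type zero configuration, and rules them out in even degree; that is the intended application, and it avoids any claim about the coefficients of an irreducible factor.
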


The case $k \in \{2, n-2\}$ is related to \emph{complex Pisot numbers} and their inverses. This will be covered in Section \ref{sec_complex_Pisot} in great detail. Meanwhile, we state the following:

\begin{theorem}\label{Thm_k2_Newman}
If 
\begin{enumerate}
\item  $3 \leq n\leq 20$, or
\label{k2_Newman_case_1}
\item  $n > 20$ and $n \not \equiv 1, 7, 15, 21, 25, 27 \pmod{30}$
\label{k2_Newman_case_2}
\end{enumerate}
then the pairs $(2, n)$ and $(n-2, n)$ are Newman--admissible.
\end{theorem}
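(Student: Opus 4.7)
The plan is to establish admissibility of $(2, n)$ and deduce that of $(n-2, n)$ by reciprocation. Reciprocation $f \mapsto f^*$ is an involution of $\NN_n$: the constraints $a_0 = a_n = 1$ and $a_j \in \{0,1\}$ are symmetric under $j \leftrightarrow n - j$, and the map $\alpha \mapsto 1/\alpha$ sends zeros of $f$ to zeros of $f^*$. Hence $N(f^*) = n - N(f) - U(f)$, so any $f \in \NN_n$ realizing $(2, n)$ with $U(f) = 0$ produces via $f^*$ a realization of $(n-2, n)$ with $U(f^*) = 0$. It therefore suffices to exhibit, for each admissible $n$, a witness $f \in \NN_n$ with $N(f) = 2$ and $f(z) \ne 0$ on $\partial\DD$.

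For case (1), the total number of Newman polynomials of degree at most $20$ is $\sum_{n=3}^{20} 2^{n-1} < 2^{20}$, which is easily enumerable on a computer. I would run a direct search returning, for each $n \in [3, 20]$, one explicit witness $f_n$. Rigorous verification uses the Schur--Cohn algorithm with exact integer arithmetic to confirm $N(f_n) = 2$, together with a check that $\gcd(f_n(z), f_n^*(z)) = 1$ in $\Q[z]$ (which rules out all inverse-paired zeros of a real polynomial, in particular all unimodular zeros) and $f_n(\pm 1) \ne 0$ to certify $U(f_n) = 0$. An explicit table of $18$ witnesses closes this case; for example $f_3(z) = 1 + z^2 + z^3$ serves for $n = 3$, since its only real root lies in $(-2, -1)$ and by Vieta the complex-conjugate pair has modulus in $(1/\sqrt{2}, 1) \subset \DD$.

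For case (2), I would construct, for each of the $24$ admissible residues $r \pmod{30}$, an infinite parametric family $\{f_{n,r}\}_{n \equiv r \pmod{30},\, n > 20}$ in $\NN_n$ with $N(f_{n,r}) = 2$ and $f_{n,r}|_{\partial\DD} \ne 0$. A natural template, in the spirit of the paper's general methodology, takes $f_{n,r}(z) = S_r(z) + z^{m_n} T_r(z)$ for a short Newman seed $S_r$ with $N(S_r) = 2$ and controlled minimum on $\partial\DD$, together with a Newman tail $T_r$ (possibly a variant of the high-modulus polynomial of Eq.~\eqref{superNewman}) whose shifted copy $z^{m_n} T_r(z)$ has support disjoint from $S_r$ and is dominated by $|S_r|$ pointwise on $\partial\DD$; Rouch\'e's theorem then yields $N(f_{n,r}) = N(S_r) = 2$ together with unimodular freeness. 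The exponent $m_n$ is chosen so that $\deg f_{n,r} = n$ ranges over every value in the desired residue class, so the family covers all large $n \equiv r \pmod{30}$.

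The principal obstacle is that $\min_{z \in \partial\DD}|S(z)|$ is difficult to control uniformly for Newman seeds: at $z = \pm 1$ the value $|S(z)|$ is an integer, and the residues of $S$ modulo cyclotomic factors $\Phi_d(z)$ for small $d \mid 30$ determine how much of $\partial\DD$ the polynomial $S$ dominates. Designing a suitable seed $S_r$ for each of the $24$ admissible residues, and verifying the required Rouch\'e dominance across the entire infinite family, is a case-by-case computer-assisted task. The excluded residues $\{1, 7, 15, 21, 25, 27\} \pmod{30}$ are precisely those for which no such seed-plus-tail construction succeeds, and (as foreshadowed in the introduction) they correspond to genuinely non-admissible pairs tied to regular complex Pisot numbers; this structural obstruction, detailed in Section~\ref{sec_complex_Pisot}, is the reason the theorem excludes them and is stated without proof.
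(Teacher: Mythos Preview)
Your reduction to $k=2$ via reciprocation and your treatment of Case~(1) by finite search are fine and match the paper's outline. The gap is in Case~(2): what you have written is a plan, not a proof, and the plan as stated will not work.

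The Rouch\'e template $f_{n,r}(z)=S_r(z)+z^{m_n}T_r(z)$ with a Newman seed $S_r$ satisfying $N(S_r)=2$ requires $\min_{\partial\DD}|S_r|>\max_{\partial\DD}|z^{m_n}T_r|$. Even with the minimal tail $T_r(z)=1$ this demands $\min_{\partial\DD}|S_r|>1$. But Newman polynomials with $N=2$ are (reciprocals of) complex Pisot polynomials and do \emph{not} have minimum modulus exceeding~$1$ on $\partial\DD$; indeed, Table~\ref{largeNewman} shows the smallest $N(f)$ for which $\min_{\partial\DD}|f|>1$ is achievable is $N(f)=6$. So the obstacle you flag is fatal to the approach, not merely a detail to be filled in.

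The paper's actual argument avoids Rouch\'e entirely. It uses just four explicit one-parameter families from Table~\ref{patternsNewman2} (patterns no.~1, 2, 6, 7), whose degrees run over the progressions $n\equiv 0\pmod 2$, $n\equiv 2\pmod 3$, $n\equiv 4\pmod 5$, $n\equiv 3\pmod 5$. By the Chinese Remainder Theorem these cover exactly the $24$ residues modulo~$30$ complementary to $\{1,7,15,21,25,27\}$. The verification that each family has $N(h)=2$, $U(h)=0$ is done not by strict domination but by Boyd's exit-point formula (Proposition~\ref{propBoyd}), which handles the delicate situation $|f(z)|\geq|g(z)|$ with equality at finitely many unimodular points. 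Only two sporadic degrees, $n=7$ and $n=15$, then need individual witnesses for Case~(1).

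One further correction: the excluded residues are not ``genuinely non-admissible'' in any proven sense. Theorem~\ref{notNewmanAdmThm} establishes inadmissibility only for four specific degrees $n\le 35$; the general statement is Conjecture~\ref{conjNewman}. The theorem excludes those residues simply because the four pattern families fail to reach them, not because of a proven structural obstruction.
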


The reason behind the admissibility in Case \ref{k2_Newman_case_1} of Theorem \ref{Thm_k2_Newman} is the existence of many sporadic examples  of small degree with $k=2$ and $k=n-2$. The admissibility in Case \ref{k2_Newman_case_2} follows from the existence of $10$ sequences of polynomials $f(z) \in \NN$ with $N(f)=2$ that are listed in Table \ref{patternsNewman2}. These are reciprocal to the minimal polynomials of regular complex--Pisot numbers, see Section \ref{sec_complex_Pisot}.

Next, we list all pairs $(k, n)$ of small degree that are not admissible in a non--trivial way. All those pairs were found with a computer.

\begin{theorem}\label{notNewmanAdmThm} Among pairs $(k, n)$ with $k \in \{3, n-3\}$, $n \geq 4$, only one pair $(3, 6)$ is not Newman--admissible. For $k \in \{2, n-2\}$, inadmissible pairs of degree  $n \leq 35$ are:
\[
(2, 21),  (2, 25), (2, 27),  (2, 31), (19, 21), (23, 25), (25, 27), (29, 31).
\]
Besides the exceptions listed above, every other pair $(k, n)$ with $ 2 \leq k \leq n-2$ and $n \leq 35$ is Newman--admissible.
\end{theorem}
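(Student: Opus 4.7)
The theorem reduces to a finite computer verification that leverages the earlier results of this section.

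For the first assertion on pairs $(k, n)$ with $k \in \{3, n-3\}$: Theorem \ref{mainNewmanThm} handles every such pair with $n \geq 7$. Restricted to $2 \leq k \leq n-2$ (the range in which the theorem claims admissibility), the only valid pairs that remain lie in $n \in \{5, 6\}$: namely $(2, 5)$, $(3, 5)$ and $(3, 6)$. The first two fall under $k \in \{2, n-2\}$ and are admissible by case (1) of Theorem \ref{Thm_k2_Newman}. The inadmissibility of $(3, 6)$ is established by exhaustive enumeration of the $2^5 = 32$ degree-six Newman polynomials $f(z) = 1 + a_1 z + \dots + a_5 z^5 + z^6$, verifying in each case that we do not have both $N(f) = 3$ and $U(f) = 0$.

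For the second assertion, case (1) of Theorem \ref{Thm_k2_Newman} gives admissibility of $(2, n)$ and $(n-2, n)$ for $3 \leq n \leq 20$, and case (2) extends this to every $n \in [21, 35]$ except those with $n \equiv 1, 7, 15, 21, 25, 27 \pmod{30}$; in this range the listed residues isolate precisely $n \in \{21, 25, 27, 31\}$. The involution $f \mapsto f^*$ maps $\NN_n$ to itself, preserves $U(f)$, and sends $N(f)$ to $n - N(f) - U(f)$; hence admissibility of $(2, n)$ and $(n-2, n)$ are equivalent, so it suffices to treat $k = 2$ at each of the four remaining degrees. The plan is to enumerate all $2^{n-1}$ polynomials in $\NN_n$ and verify that no $f$ has $N(f) = 2$ together with $U(f) = 0$; the largest such search has size $2^{30} \approx 10^9$, well within reach of modern brute-force enumeration. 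Witnesses from the same enumeration provide realizing polynomials for every remaining admissible pair $(k, n)$ with $2 \leq k \leq n-2$ and $n \leq 35$ not on the exception list.

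The chief technical obstacle is ensuring correctness of the root count across on the order of $10^9$ polynomials: floating-point root finders can misclassify zeros that lie on or near the unit circle, so exact arithmetic is indispensable. A robust implementation uses the Schur-Cohn algorithm in rational arithmetic, which simultaneously returns $N(f)$ and detects whether $f$ has any zero on $\partial\DD$ without computing approximations to the zeros themselves. Given this tooling, the theorem follows from the verified finite calculation just described.
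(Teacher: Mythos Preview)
Your proposal is correct and follows essentially the same strategy as the paper: reduce via Theorem~\ref{mainNewmanThm} and Theorem~\ref{Thm_k2_Newman}, then settle the residual cases by exhaustive finite enumeration. The only cosmetic differences are that you cover $(3,5)$ by observing $3=n-2$ and invoking Theorem~\ref{Thm_k2_Newman}, whereas the paper exhibits the explicit witness $1+z^3+z^5$; and for the exact root count you cite Schur--Cohn rather than the Bistritz rule that the paper actually implements (Section~\ref{sec_exhaustive}).
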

Inadmissibility of $(3, 6)$ explains the lower bound $n \geq 7$ in Theorem \ref{mainNewmanThm}. The list of inadmissible pairs for $k=2$ (and $k=n-2$) in Theorem \ref{notNewmanAdmThm}, together with the list of exceptional congruences in Theorem \ref{Thm_k2_Newman} yield evidence towards the following conjecture:
\begin{conjecture}\label{conjNewman}
Pairs $(2, n)$ and $(n-2, n)$, where $n \in \N$ satisfies
\[
n \equiv 1, 7, 15, 21, 25, 27 \pmod{30}, 
\]
form the complete set elements of $(k, n)$ and $(n-k, n) \in \N^2$ with $k \not\in \{0, 1, n-1, n\}$ that are not Newman admissible for $n \geq 21$.
\end{conjecture}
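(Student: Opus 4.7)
My plan is to reduce the conjecture to proving non-admissibility of the pairs $(2, n)$ in the six listed residue classes modulo $30$, and then to derive this from a complete structural classification of Newman polynomials with exactly two zeros in $\DD$. First, Theorem~\ref{mainNewmanThm} removes every $k \in [3, n-3]$ from consideration, and Proposition~\ref{Prop_Mu_Newman} handles $k \in \{1, n-1\}$. Since $f \in \NN_n$ with $U(f) = 0$ satisfies $f^* \in \NN_n$ with $N(f^*) = n - N(f)$, the pairs $(2, n)$ and $(n-2, n)$ are simultaneously Newman-admissible or not. Thus the conjecture is equivalent to the statement that, for $n \geq 21$, $(2, n)$ is Newman-admissible if and only if $n \not\equiv 1, 7, 15, 21, 25, 27 \pmod{30}$. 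The ``if'' direction is exactly Theorem~\ref{Thm_k2_Newman}, and the ``only if'' direction is verified by Theorem~\ref{notNewmanAdmThm} for $n \leq 35$, so the real content is the infinite tail $n \geq 36$.

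For that tail I would attempt a full structural classification of
\[
\mathcal{A} := \{ f \in \NN : N(f) = 2,\ U(f) = 0 \}.
\]
Any $f \in \mathcal{A}$ factors over $\Z[z]$ as $f = P \cdot Q$, where $P$ is the monic integer factor that carries the two zeros of $f$ in $\DD$ and $Q \in \Z[z]$ is the remaining monic integer factor, which has all of its zeros outside $\bar{\DD}$. The reciprocal $P^*$ is then a monic integer polynomial with exactly two zeros outside $\bar{\DD}$ and the rest inside $\DD$; up to cyclotomic content, this is the minimal polynomial of a complex Pisot number. The expected classification, strongly suggested by Table~\ref{patternsNewman2}, is that for $\deg f$ large enough only the ten regular complex Pisot families from Section~\ref{sec_complex_Pisot} contribute to $P$, each paired with a rigidly constrained finite list of outer factors $Q$ with which $P \cdot Q \in \NN$.

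Once the classification is in hand the conjecture becomes an arithmetic accounting problem: enumerate the residues modulo $30$ attained by $\deg P + \deg Q$ as $(P, Q)$ ranges over the admissible pairs. Because the ten families grow in degree with periods dividing $\mathrm{lcm}(2, 3, 5) = 30$, this is a finite check, and one expects to obtain precisely $\Z/30\Z \setminus \{1, 7, 15, 21, 25, 27\}$, matching Theorem~\ref{Thm_k2_Newman} and completing the conjecture. The principal obstacle, by a long margin, is the classification step itself: ruling out exotic complex Pisot numbers, or non-irreducible constructions of $P$, that might yield $N(f) = 2$ for arbitrarily large $n$ in the exceptional congruences. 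The natural tools seem to be Bertin--Boyd-style limit-point results for complex Pisot numbers combined with a fine combinatorial analysis of when the convolution coefficients of $P \cdot Q$ can be forced into $\{0, 1\}$; a complete rigorous argument along these lines is, to my knowledge, not yet available, which is why the statement is formulated as a conjecture.
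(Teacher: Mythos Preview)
The statement is a \emph{conjecture}; the paper does not prove it. Your proposal correctly recognizes this: you reduce the question to the case $k=2$ via Theorem~\ref{mainNewmanThm} and reciprocation, note that the ``if'' direction is Theorem~\ref{Thm_k2_Newman} and the ``only if'' direction is verified computationally only up to $n\le 35$ by Theorem~\ref{notNewmanAdmThm}, and then honestly identify the missing piece as a complete classification of all $f\in\NN$ with $N(f)=2$, $U(f)=0$ for large degree. This matches exactly the paper's own stance in Section~\ref{sec_complex_Pisot}, where the authors say they \emph{expect} the ten regular sequences of Table~\ref{patternsNewman2} together with the finitely many sporadic examples to be exhaustive, but explicitly leave this unproved.

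So your proposal is not a proof, and does not claim to be one; as a roadmap it is sound and aligned with the paper. One small correction to your sketch: you write that any $f\in\mathcal{A}$ factors over $\Z[z]$ as $P\cdot Q$ with $P$ carrying the two zeros in $\DD$ and $Q$ having all zeros outside $\bar\DD$. This need not hold as stated, since the two zeros inside $\DD$ could be real and Galois-conjugate to zeros outside $\DD$ (cf.\ sequence no.~1 in Table~\ref{patternsLittlewood2} for the Littlewood analogue); in that case $f$ may well be irreducible and no such integer splitting exists. The classification you want is therefore not purely a complex-Pisot classification but must also allow for irreducible $f$ with two small real roots, which is an additional case your outline does not cover.
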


Figure \ref{gridN} depicts the situation described above.

\subsection{Littlewood polynomials}\label{sec_Littlewood}

In the Littlewood case our results are less complete than for Newman polynomials: we are short of regular $\{-1, 1\}$--patterns that produce polynomials $f(z) \in \LL_n$ with regular, predictable behaviour of $N(f)$ and freedom of control in both the zero--number $k$ and the degree $n$.

As in the Newman case, the admissibility for $k \in \{1, n-1\}$ follows from the classification of Littlewood--Pisot polynomials due to Mukunda \cite{Mu1, Mu2}:

\begin{proposition}[Mukunda]\label{Prop_Mu_Littlewood}
Pairs $(1, n)$ and $(n-1, n)$, $n \geq 2$ are Littlewood--admissible.
\end{proposition}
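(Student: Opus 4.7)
The plan is to handle $(n-1, n)$ by exhibiting a single explicit Littlewood--Pisot polynomial and then pass to reciprocals for $(1, n)$. Reciprocation acts cleanly on $\LL_n$: for $f \in \LL_n$ the reciprocal $f^*$ is again in $\LL_n$ (real reciprocation merely reverses the coefficient vector), $U(f^*) = U(f)$, and $N(f^*) = n - N(f) - U(f)$. Thus any polynomial realizing $(n-1, n)$ with $U = 0$ automatically produces one realizing $(1, n)$ with $U = 0$, and we only need to handle the former.

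For the explicit construction I would use the $n$-bonacci polynomial $f_n(z) := z^n - z^{n-1} - \cdots - z - 1 \in \LL_n$, together with the factorization $(z-1) f_n(z) = z^{n+1} - 2 z^n + 1$. On $|z| = 1$, vanishing of the right-hand side requires $|z^{n+1}+1| = |2z^n| = 2$, which forces $z^{n+1} = 1$ and $z^n = 1$, hence $z = 1$; since $f_n(1) = 1 - n \neq 0$, this shows $U(f_n) = 0$. From $f_n(1) < 0 < f_n(2) = 1$, $f_n$ also has a real root $\rho_n \in (1, 2)$ (the $n$-bonacci constant).

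To prove $\rho_n$ is the \emph{only} zero of $f_n$ outside $\overline{\DD}$, I would apply Rouch\'e to $g_n(z) := z^{n+1} - 2z^n + 1$ on the circle $|z| = 1 + \eps$ for any $\eps \in (0, \rho_n - 1)$ with $(1+\eps)^n (1 - \eps) > 1$; this inequality (satisfied whenever $\eps < (n-1)/n$) provides the dominance $|-2z^n| = 2(1+\eps)^n > (1+\eps)^{n+1} + 1 \geq |z^{n+1}+1|$ needed for Rouch\'e. It forces $g_n$ to have exactly $n$ zeros inside $|z| < 1+\eps$; since $\rho_n$ lies outside this disk and $z = 1$ lies inside, the remaining $n - 1$ zeros of $g_n$---equivalently, the non-Pisot zeros of $f_n$---all satisfy $|z| < 1 + \eps$. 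As this holds for every sufficiently small $\eps > 0$, the zeros lie in $\overline{\DD}$, and $U(f_n) = 0$ upgrades this to $\DD$. Hence $N(f_n) = n - 1$, which establishes $(n-1, n)$; reciprocation then yields $(1, n)$.

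The main obstacle is this zero-counting step, where the Rouch\'e dominance must be combined with a limit argument as $\eps \to 0^+$ to push the $n-1$ inner zeros strictly into $\DD$; it is elementary but requires care. A cleaner substitute is to invoke Mukunda's classification \cite{Mu1, Mu2} of Littlewood--Pisot polynomials, in which exactly this family furnishes the required degree-$n$ representative for every $n \geq 2$, after which the reciprocation step finishes the proof of both $(1, n)$ and $(n-1, n)$ simultaneously.
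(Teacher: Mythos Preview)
Your argument is correct. The paper does not supply its own proof of this proposition: it is stated as Mukunda's result and simply attributed to \cite{Mu1, Mu2} with the remark that ``the admissibility for $k \in \{1, n-1\}$ follows from the classification of Littlewood--Pisot polynomials due to Mukunda.'' What you have written is therefore strictly more than the paper offers --- you reconstruct the relevant piece of Mukunda's classification by exhibiting the $n$-bonacci polynomial $f_n(z)=z^n-z^{n-1}-\cdots-1$, verifying $U(f_n)=0$ via the factorization $(z-1)f_n(z)=z^{n+1}-2z^n+1$, and counting zeros with Rouch\'e plus a limit $\eps\to 0^+$. This is exactly the standard route to showing $f_n$ is a Pisot polynomial, so your approach and the paper's citation point to the same underlying fact; you have just made it self-contained.

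One small imprecision: the parenthetical ``satisfied whenever $\eps<(n-1)/n$'' is not the sharp range (the critical point of $(1+\eps)^n(1-\eps)$ is at $\eps=(n-1)/(n+1)$, and the actual interval on which the inequality holds extends a bit further), but this is harmless since you only need the inequality for arbitrarily small $\eps>0$, which follows immediately from the derivative $n-1>0$ at $\eps=0$.
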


For $k \in \{2, n-2\}$, we have a new result:

\begin{theorem}\label{Thm_k2_Littlewood}
If $n \geq 3$ and 
\begin{enumerate}
\item $n \leq 12$, or
\label{k2_Littlewood_case_1}
\item $n \not\equiv 1 \pmod{6}$
\label{k_2Littlewood_case_2}
\end{enumerate}
then the pairs $(2, n)$ and $(n-2, n)$ are Littlewood--admissible.
\end{theorem}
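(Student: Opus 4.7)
The pairs $(n-2, n)$ reduce to $(2, n)$ by reciprocation. Indeed, $\LL_n$ is closed under $f \mapsto f^*$ (reversing a $\pm 1$ coefficient sequence produces another $\pm 1$ sequence), and since $N(f) + N(f^*) + U(f) = n$, any $f \in \LL_n$ realizing $(2, n)$ with $U(f) = 0$ yields an $f^* \in \LL_n$ realizing $(n-2, n)$ with $U(f^*) = 0$. Thus it suffices to realize $(2, n)$. For Case~\ref{k2_Littlewood_case_1} (i.e.\ $3 \leq n \leq 12$), the plan is a brute-force computer search over the $2^{n+1}$ Littlewood polynomials of degree $n$: the range is small enough that a direct enumeration produces, for each such $n$, an explicit witness $f \in \LL_n$ with $N(f) = 2$ and $U(f) = 0$, where the two conditions are verified by a Schur--Cohn root-counting test, and these witnesses can be tabulated.

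For Case~\ref{k_2Littlewood_case_2}, the strategy is to produce, for each residue class $r \in \{0, 2, 3, 4, 5\} \pmod{6}$, an infinite family $\{f_m^{(r)}\}_{m \ge m_0} \subset \LL$ with $\deg f_m^{(r)} \to \infty$ along $\deg f_m^{(r)} \equiv r \pmod{6}$ and $N(f_m^{(r)}) = 2$, $U(f_m^{(r)}) = 0$ for every $m$. Such families should naturally arise from reciprocals of minimal polynomials of \emph{complex Littlewood--Pisot} numbers (cf.\ Mukunda~\cite{Mu1, Mu2, Mu3}), possibly combined with geometric-progression tails, in close analogy with Table~\ref{patternsNewman2} of the Newman case. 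Once a candidate family is guessed from a small-degree computer search, the zero count is propagated along $m$ by a Rouch\'e-style estimate on $\partial\DD$: one writes $f_{m+1}^{(r)}(z) - f_m^{(r)}(z) = z^{d} p(z)$ for an explicit low-degree $p(z)$ and shows that $|p(z)|$ is strictly dominated by $|f_m^{(r)}(z)|$ on $\partial\DD$, preventing any zero from crossing the unit circle as $m$ grows. Unimodular zeros are then excluded by a finite check at those roots of unity whose order divides the period $6$.

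The hard part is finding the patterns in the first place: unlike the Newman situation, in which Boyd's polynomial~\eqref{superNewman} satisfies the large uniform bound $|f| > 2$ on $\partial\DD$ and thereby permits a single universal splicing argument across all residues, no such universal Littlewood polynomial is available here. Consequently each of the five residue classes must be handled by its own explicit family, and one must separately establish a uniform lower bound $\min_{|z|=1}|f_m^{(r)}(z)| \geq M_0 > 0$ independent of $m$ to drive the Rouch\'e step. The exceptional class $n \equiv 1 \pmod{6}$ is left open, paralleling the congruence obstructions in Theorem~\ref{Thm_k2_Newman}; it appears to be tied to an unavoidable cyclotomic factor $\Phi_d(z)$ with $d \mid 6$ forcing $U(f) > 0$ along any naive family, and a full resolution would presumably demand either a qualitatively different construction or additional sporadic examples.
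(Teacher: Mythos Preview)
Your outline matches the paper's strategy in spirit---explicit infinite families plus sporadic small-degree fillers---but it remains a plan rather than a proof, and two details deserve comment.

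First, the paper's actual argument is shorter than you anticipate. It exhibits four (not five) families in Table~\ref{patternsLittlewood2}: two of period~$2$ covering all even $n\ge 4$, and two of period~$3$ covering $n\equiv 0,2\pmod 3$; their union is exactly $\{n\ge 3:n\not\equiv 1\pmod 6\}$. For Case~\ref{k2_Littlewood_case_1} only $n=7$ is not already covered by these families, so a single sporadic witness $1+z+z^2-z^3+z^4-z^5+z^6-z^7$ suffices---your full brute-force over $3\le n\le 12$ is unnecessary.

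Second, your proposed verification step (show $|f_{m+1}^{(r)}(z)-f_m^{(r)}(z)|<|f_m^{(r)}(z)|$ on $\partial\DD$ via a uniform lower bound $\min_{|z|=1}|f_m^{(r)}(z)|\ge M_0>0$) is where a gap lurks: for the actual families this uniform strict bound need not hold, because the polynomials can touch zero at roots of unity in the limiting representation. The paper avoids this by writing each family as $h(z)=(f(z)+z^{s+tm}g(z))/(1-z^t)$, verifying only the weak inequality $|f|\ge|g|$ on $\partial\DD$ via Lemma~\ref{ResolventLem}, and then invoking Boyd's exit-point formula (Proposition~\ref{propBoyd}) to control what happens at the finitely many equality points. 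That machinery, not a naive Rouch\'e step, is what makes the propagation of $N(h)=2$ rigorous along each family.
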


We also prove the admissibility of pairs $(k, n)$ and $(n-k, n)$ for other small $k$:

\begin{theorem}\label{Thm_k3to11_Littlewood}
Every pair $(k, n), (n-k, n) \in \N^2$ with $3 \leq k \leq 11$, $n \geq k+3$ is Littlewood--admissible.
\end{theorem}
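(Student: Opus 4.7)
Reciprocation $f\mapsto f^*(z)=z^nf(1/z)$ carries $\LL_n$ to itself, preserves the condition $U(f)=0$, and swaps $N(f)=k$ with $N(f^*)=n-k$. Hence admissibility of $(k,n)$ and of $(n-k,n)$ are equivalent, so it suffices to realize $(k,n)$ for each $k\in\{3,\ldots,11\}$ and every $n\ge k+3$.

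For fixed $k$ I would split into small and large $n$. For $n$ in some initial window $[k+3,N_k]$, a direct exhaustive computer search over $\LL_n$ produces witnesses $f_{k,n}$ with $N(f_{k,n})=k$ and $U(f_{k,n})=0$, using a Schur--Cohn-style test for the zero count inside $\DD$. For $n>N_k$, I would invoke a Rouch\'e-based extension. Given a \emph{seed} $S\in\LL_d$ with $N(S)=k$, $U(S)=0$, and $c:=\min_{|z|=1}|S(z)|$, and any choice of signs $\eps_1,\ldots,\eps_j\in\{-1,1\}$ with $j<c$, each partial sum
\[
  f_i(z)\;:=\;S(z)+\eps_1 z^{d+1}+\cdots+\eps_i z^{d+i}\;\in\;\LL_{d+i}
\]
satisfies $|f_i(z)|\ge c-i$ on $\partial\DD$ (triangle inequality, by induction on $i$). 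In particular, at each step $|f_{i-1}(z)|>1=|z^{d+i}|$ on $\partial\DD$, so Rouch\'e applies and preserves the number of zeros in $\DD$; thus $N(f_i)=k$ and $U(f_i)=0$ for all $i< c$. A single seed of degree $d$ and minimum $c$ therefore realizes $(k,n)$ for every $n\in[d,\,d+\lceil c\rceil-1]$.

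To cover all $n\ge k+3$ one assembles a family of seeds whose extension blocks, together with the initial search interval, tile $[k+3,\infty)$. For each $k\in\{3,\ldots,11\}$ I would produce such seeds as the ``rotated large Littlewood polynomials'' alluded to in the abstract: products such as $L(z)L(\omega z)\cdots$ of Rudin--Shapiro-like factors $L$ (whose minimum on $\partial\DD$ grows like $\sqrt{\deg L}$), pre- or post-multiplied by short Littlewood correction factors chosen to pull the zero count down to exactly $k$. Each candidate is certified by explicit computation of $N(S)$ and of $\min_{|z|=1}|S(z)|$.

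The principal obstacle is precisely this seed construction. Tuning the zero count of a rotated product $L(z)L(\omega z)\cdots$ down to a prescribed small $k$ forces multiplication by correction factors that erode the lower bound $\min|L|\gtrsim\sqrt{\deg L}$, shrinking the Rouch\'e block $[d,d+\lceil c\rceil-1]$ and making the tiling harder; the restriction $k\le11$ in the statement presumably reflects the balance between these two competing demands, beyond which the author's method no longer produces seeds whose blocks cover every sufficiently large degree. A secondary bookkeeping obstacle is ensuring that the extension blocks and the finite initial search interval together leave no gap in $[k+3,\infty)$; this is straightforward once enough seeds have been exhibited.
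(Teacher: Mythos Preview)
Your reduction via reciprocation is correct and matches the paper. The Rouch\'e extension you state is also sound: a seed $S\in\LL_d$ with $\min_{\partial\DD}|S|=c$ does yield witnesses for all degrees in $[d,d+\lceil c\rceil-1]$.

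The genuine gap is exactly what you flag as the ``principal obstacle'': you have not constructed any seeds, and your plan requires \emph{infinitely many} of them, with minima large enough that the blocks $[d_j,d_j+\lceil c_j\rceil-1]$ tile $[k+3,\infty)$. Your proposed source---rotated Rudin--Shapiro products tweaked by short correction factors to force $N(S)=k$---is purely speculative; no mechanism is given for pulling the zero count of such a product down to a prescribed small $k$ while keeping $\min|S|$ large, and there is no reason to expect this to succeed systematically. As written, the proposal is an outline whose main step is missing.

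The paper sidesteps this difficulty entirely. For each $k\in\{3,\ldots,11\}$ it exhibits a \emph{single} coefficient pattern of the form $p\,(\pm)^m$ (a short fixed prefix $p$ followed by $m$ copies of one sign) and proves that the resulting $h\in\LL$ has $N(h)=k$, $U(h)=0$ for \emph{every} $m\ge 3$. After clearing the denominator one has $(1-z)h(z)=f(z)\pm z^{n}$ for a fixed $f$, and $|f(z)|\ge 1$ on $\partial\DD$ with equality at $z=1$; naive Rouch\'e therefore does not apply. Instead the paper invokes Boyd's exit-point formula, a refinement of Rouch\'e that tracks which roots cross $\partial\DD$ at the equality points, to conclude $N(h)=N(f)$ for all $n$ beyond a small computable threshold. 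Thus one pattern covers an entire infinite tail of degrees, and no infinite family of seeds is needed. For even $k$ the bottom degree $n=k+3$ is filled in by reciprocating the $k=3$ witness of that degree.

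Your guess about the origin of the bound $k\le 11$ is also off: it is not a trade-off between zero count and minimum modulus, but simply the range of $k$ for which explicit patterns were tabulated and certified by the automated procedure; larger $k$ are handled by other constructions in the paper.
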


A significant part of remaining pairs with $k \in [12, n-12]$ are already known to be admissible \cite{BCFJ}.  By Theorem 2.1 of \cite{BCFJ}, every valid pair $(k, n)$, where $n \geq 2k$ and $\gcd{(k, n+1)}=1$,  or $n \leq 2k$, and $\gcd{(k+1, n+1)}=1$ is Littlewood admissible: they are realized by $f(z) \in \LL_n$ with one sign change in the coefficient pattern. Also, points $(k, n) \in \N^2$ lying on each of the $7$ lines $n=2k$, $2k \pm 1$, $2k \pm2$, $2k  \pm 3$, correspond to Littlewood--admissible pairs. Points where $n=2k-4$, $k \not\equiv 0 \pmod{6}$ and points where $n=2k+4$, $k \not\equiv 2 \pmod{6}$ are admissible as well. These results can be extracted from Proposition 2.5 and Corollary 2.8 Part (ii)  of \cite{BCFJ}: such pairs are covered by $f(z) \in \LL_n$ with one negative coefficient near the middle term.

Our main new result regarding $f(z) \in \LL_n$ is

\begin{theorem}\label{mainLittlewoodThm}
Let $n \geq 6$ be even. Then, for every odd $k \in [3, n-3]$ and every even $k \in [n/3, 2n/3]$, the valid pair $(k, n)$ is Littlewood-admissible.

Likewise, suppose that $n >  6$ is odd. Then, for every odd $k \in [3, n/2]$ and every even $k \in [n/2, n-3]$, the valid pair $(k, n)$ is Littlewood-admissible.
\end{theorem}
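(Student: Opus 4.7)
My strategy is to realize each pair $(k,n)$ in the theorem by constructing an explicit Littlewood polynomial $f \in \LL_n$ with $N(f)=k$ and $U(f)=0$, combining the already-established admissibility from Proposition \ref{Prop_Mu_Littlewood} and Theorems \ref{Thm_k2_Littlewood}, \ref{Thm_k3to11_Littlewood} with the constructions of \cite{BCFJ} and one new device, the \emph{spaced product}. Given seeds $f_1 \in \LL_{d_1}$, $f_2 \in \LL_{d_2}$ with $U(f_1) = U(f_2) = 0$, I claim that the polynomial
\[
h(z) \;=\; f_1(z)\,f_2\bigl(z^{d_1+1}\bigr)
\]
lies in $\LL_{(d_1+1)(d_2+1)-1}$, because the coefficient positions $i+j(d_1+1)$ with $0 \leq i \leq d_1$, $0 \leq j \leq d_2$ are distinct and exhaust $\{0, 1, \dots, (d_1+1)(d_2+1)-1\}$. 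Since $z \mapsto z^{d_1+1}$ is an $(d_1+1)$-to-$1$ covering of $\DD$ onto itself, one obtains
\[
N(h) \;=\; N(f_1) + (d_1+1)\,N(f_2), \qquad U(h) \;=\; 0.
\]

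For a target pair $(k,n)$ with $n+1 = (d_1+1)(d_2+1)$, the plan is to pick a seed $f_1$ with $N(f_1) \equiv k \pmod{d_1+1}$ and then to supply $f_2 \in \LL_{d_2}$ realizing $N(f_2) = (k - N(f_1))/(d_1+1)$, which is available by an induction on $n$. For the odd-$k$ range $[3, n-3]$ with even $n$, one chooses seeds of even length $d_1+1$ so that $N(h)$ runs through every odd integer in an arithmetic progression as $N(f_2)$ varies. For the even-$k$ band $[n/3, 2n/3]$ with even $n$, the natural seed is $f_1(z) = 1 + z - z^2 \in \LL_2$ (which has $N(f_1)=1$, $U(f_1)=0$), giving $N(h) = 1 + 3\,N(f_2)$; the window $k \in [n/3, 2n/3]$ then translates into $N(f_2)$ lying in the central third of its admissible range, which is exactly what the earlier theorems guarantee. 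The odd-$n$ assertions follow by combining the reciprocal identity $N(f^\ast) = n - N(f)$, which swaps the odd-$k$ and even-$k$ halves of the statement, with analogous direct constructions.

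For those $n$ where $n+1$ is prime (so the spaced product is inapplicable), one supplements by perturbation: starting from a neighbouring admissible Littlewood polynomial, flip a small set of coefficient signs and use Rouch\'e's theorem -- bootstrapped from a ``large'' Littlewood polynomial playing the r\^ole of \eqref{superNewman} in the Newman case -- to confirm that $N$ and $U$ change only by the predicted amount. The principal obstacle is the arithmetic rigidity of the spaced product: the congruence $N(h) \equiv N(f_1) \pmod{d_1+1}$ forces a seed for every required residue class, and the constraint $n+1 = (d_1+1)(d_2+1)$ makes some $n$ accessible only through perturbations. In particular, the comparatively narrow window $[n/3, 2n/3]$ for the even-$k$ case reflects exactly what a single degree-$2$ seed can deliver; any relaxation of this bound would demand higher-degree seeds whose $N$-values are harder to prescribe, which is also why the Littlewood theorem is weaker than its Newman counterpart in Theorem \ref{mainNewmanThm}.
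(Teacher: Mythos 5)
Your spaced-product device $h(z)=f_1(z)\,f_2\bigl(z^{d_1+1}\bigr)$ is a correct observation as far as it goes: $h\in\LL_{(d_1+1)(d_2+1)-1}$ and, since the $(d_1+1)$th roots of a point in $\DD$ all lie in $\DD$, indeed $N(h)=N(f_1)+(d_1+1)N(f_2)$ and $U(h)=0$. But the plan built on it does not close, for three reasons, and the paper takes an entirely different route.

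First, the congruence $N(h)\equiv N(f_1)\pmod{d_1+1}$ is a far more rigid obstruction than you concede. With the degree-$2$ seed you propose, every $f_1\in\LL_2$ with $U(f_1)=0$ has $N(f_1)=1$: the only cyclotomic-free quadratics in $\LL_2$ are $\pm(1\pm z-z^2)$, all with one root in $\DD$ and one outside, and $N(f_1)\in\{0,2\}$ is impossible since $\abs{a_0/a_2}=1$. So a degree-$2$ seed produces only $k\equiv 1\pmod 3$; roughly two thirds of the even $k$ in $[n/3,2n/3]$ are systematically missed, and there is no choice of $f_1\in\LL_2$ to repair this. Second, the construction needs $n+1=(d_1+1)(d_2+1)$, so every $n$ with $n+1$ prime is inaccessible; this is not a corner case, it is a positive-density set of degrees. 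Third, the ``perturbation bootstrapped from a large Littlewood polynomial'' that you invoke for $n+1$ prime has no engine behind it. In the Newman case the paper has the explicit degree-$38$ polynomial \eqref{superNewman} with $\abs{f}>2$ on $\partial\DD$, but the analogous Littlewood object is exactly what is \emph{not} available: the flat-polynomial statement is Conjecture~\ref{flatConj}, and the unconditional Carroll--Eustice--Fiegel iterates only exist at the sparse degrees $13^{2^j}-1$. The paper itself points to this as the reason the Littlewood theorem is weaker; your plan assumes a resolution of that very difficulty.

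The paper's actual proof avoids products and multiplicativity altogether. It uses a single explicit one-parameter family, the pattern $(+-+)^m+(-)^l$ of Theorem~\ref{thmSPL}, whose $N(h)$ is computed rigorously via Boyd's exit-point formula (Proposition~\ref{propBoyd}) and a Cauchy-index argument (Lemma~\ref{lemPrep2}); this yields Corollary~\ref{colSPL}, which covers odd $k\in[3,n/2]$ and even $k\in[n/2,2n/3]$ for \emph{every} $n\geq 2$, and the full statement then follows by the reciprocation symmetry $(k,n)\mapsto(n-k,n)$ exactly as you suggest in your final paragraph. That construction works uniformly in $n$ and carries no congruence penalty on $k$, which is what your approach lacks.
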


This result will be derived by counting the zeros of the Littlewood polynomials that originate from a certain pattern:

\begin{theorem}\label{thmSPL}
Let $m$, $l \in \N$, $m \geq 1$, $l \geq 0$. Let $h(z) \in \LL_{l+3m}$ be defined by the pattern $(+-+)^m+(-)^l$:
\begin{equation}\label{eqSPL}
h(z) = (1-z+z^2)\cdot\frac{1-z^{3m}}{1-z^3} + z^{3m}-z^{3m+1} \cdot \frac{1-z^l}{1-z}.
\end{equation}
If $l \ne m+1$, then $U(h) =0$ and
\[
N(h) =	\begin{cases}
			2m,         & \text{for } l < m + 1\\
			2m+1,  & \text{for }l > m + 1
		\end{cases}.
\]
\end{theorem}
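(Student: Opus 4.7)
My plan is to bring $h(z)$ into a compact closed form, then handle unimodular zeros and interior zeros separately.

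\emph{Step 1 (closed form).}  I would first observe that the pattern $(+-+)^m + (-)^l$ factors as $h(z) = P_m(z) + z^{3m}S_l(z)$, where $P_m(z) = (1-z+z^2)(1-z^{3m})/(1-z^3)$ and $S_l(z) = 1 - z - \dots - z^l = (1 - 2z + z^{l+1})/(1-z)$.  Multiplying by $(1-z^3) = (1-z)(1+z+z^2)$ clears denominators and yields the eight-term polynomial
\[
Q(z) := (1-z^3)\, h(z) = (1-z+z^2) - 2 z^{3m+2}(1+z) + z^{3m+l+1}(1+z+z^2).
\]
Because the three extra roots $1,\omega,\omega^2$ of $1-z^3$ all lie on $\partial\DD$, it follows that $N(h) = N(Q)$, so I can work with $Q$ for the counting step.

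\emph{Step 2 ($U(h)=0$).}  To prove no zeros on $\partial\DD$, I would check three cases.  At $\zeta=1$: $h(1) = m+1-l \ne 0$ when $l \ne m+1$.  At $\zeta = \omega, \omega^2$: direct substitution using $\omega^3=1$ and $1+\omega+\omega^2=0$ gives a nonzero value in each of the three residue classes of $l \bmod 3$.  For any other unimodular $\zeta$, $1-\zeta^3 \ne 0$, so $h(\zeta) \ne 0 \Leftrightarrow Q(\zeta) \ne 0$; here I would factor each of the three coefficient blocks of $Q$ on $|z|=1$ into amplitude-times-phase form (using $|1+z| = 2|\cos(\theta/2)|$ and $|1+z+z^2| = |\sin(3\theta/2)/\sin(\theta/2)|$) and apply a triangle-inequality estimate to rule out cancellation.

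\emph{Step 3 (counting $N(h)$).}  Since $h$ has real coefficients, the argument principle reduces to $N(h) = \Delta\arg h(e^{i\theta})\big|_0^\pi / \pi$.  The self-reciprocity of $P_m$ allows the factorization $P_m(e^{i\theta}) = e^{i(3m-1)\theta/2}\,\rho_m(\theta)$ with the explicit real amplitude $\rho_m(\theta) = (2\cos\theta-1)\sin(3m\theta/2)/\sin(3\theta/2)$.  Decomposing $e^{i\cdot 3m\theta} S_l(e^{i\theta})$ in the same way and counting sign changes of the real part of $h(e^{i\theta})\,e^{-i(3m-1)\theta/2}$ on $(0,\pi)$ should yield $2m$ half-turns when $l \le m$ and $2m+1$ half-turns when $l \ge m+2$; the extra half-turn near $\theta=0$ exactly records the zero that crosses $z=1$ as $l$ passes through the forbidden value $m+1$.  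The main obstacle is this winding count: because $P_m$ already carries $3m-1$ unimodular zeros, naive Rouch\'e comparisons between $P_m$ and $z^{3m}S_l$ fail on $\partial\DD$.  As a fallback I would treat $l$ as a real homotopy parameter and use Step~2 to conclude that $N(h_l)$ can jump only when a zero crosses $\partial\DD$ (necessarily at $z=1$), giving one jump precisely at $l=m+1$; the two plateau values would then be pinned down by a single direct root-count at $l=0$ and at one value $l>m+1$.
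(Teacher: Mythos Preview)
Your Steps 1 and 2 are fine in outline: the closed form for $Q$ is correct, the checks at the cube roots of unity go through, and the triangle-inequality estimate for the remaining unimodular points is essentially the content of the paper's Lemma~\ref{lemPrep1}. The genuine gap is Step~3.

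The fallback homotopy does not work as stated. The parameter $l$ is a non-negative integer; there is no continuous family of \emph{polynomials} interpolating $h_l$ and $h_{l+1}$ in which roots vary continuously and can only cross $\partial\DD$ at $z=1$. If you make $l$ real, $z^{3m+l+1}$ is multi-valued on $\DD$; if you interpolate linearly in coefficient space, Step~2 no longer controls the intermediate polynomials. A discrete Rouch\'e step from $l$ to $l+1$ would need $|h_l(z)|>1$ on $\partial\DD$, which fails. Worse, even granting the homotopy, your anchor value at $l=0$ is not a ``single direct root-count'': $h$ at $l=0$ is a degree-$3m$ polynomial whose interior zeros must still be counted for every $m\ge 1$, and you give no method for this. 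That anchor computation is the actual heart of the theorem, and your proposal has only relocated it.

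The paper avoids both problems with a different decomposition: writing $(1-z)h(z)=f(z)+z^{3m+l+1}$ with $f(z)=\bigl(1-z+z^2-2z^{3m+2}(1+z)\bigr)/(1+z+z^2)$, one has $|f|\ge 1$ on $\partial\DD$ with equality only at $1,e^{\pm 2\pi i/3}$. Boyd's exit-point formula (a Rouch\'e-type argument in a \emph{continuous} real parameter $t$ multiplying $f$, not in $l$) then gives $N(h)=N(f)-E$ with $E\in\{0,1\}$ determined by the sign of $l-(m+1)$. The remaining work is to show $N(f)=2m+1$ once, independently of $l$, which the paper does by a Cauchy-index computation (Lemma~\ref{lemPrep2}); your proposal has no counterpart to this step.
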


Formulas of Theorem \ref{thmSPL} yield an important corollary:

\begin{corollary}\label{colSPL}
Let $n \geq 2$. For every even $k \in [n/2, 2n/3]$ and for every odd integer $k \in [3, n/2]$, one of the polynomials $h(z) \in \LL_n$ described in Theorem \ref{thmSPL} satisfies $N(h)=k$, $U(h)=0$.
\end{corollary}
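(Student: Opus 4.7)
The plan is to read Theorem~\ref{thmSPL} as a parametrization: the polynomial $h(z)$ has degree $n = l + 3m$, and by choosing the pair $(m, l)$ subject to $m \geq 1$, $l \geq 0$, and $l \ne m+1$, the value of $N(h)$ is forced to be either $2m$ (if $l < m+1$) or $2m+1$ (if $l > m+1$), always with $U(h)=0$. So the whole corollary reduces to solving the linear Diophantine condition $n = l + 3m$ for $(m,l)$ matching the target parity and size of $k$, then verifying that the constraints land in the correct branch.

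For even $k$ with $n/2 \leq k \leq 2n/3$, I would set $m := k/2$ and $l := n - 3m = n - \tfrac{3k}{2}$, so that $l+3m=n$ automatically. Since $k \geq 2$, $m \geq 1$. The upper bound $k \leq 2n/3$ gives $l \geq 0$, and the lower bound $k \geq n/2$ rearranges to $n \leq 2k$, i.e.\ $l \leq k/2 = m < m+1$. All hypotheses of Theorem~\ref{thmSPL} are therefore satisfied in the branch $l < m+1$, so $N(h) = 2m = k$ and $U(h)=0$.

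For odd $k$ with $3 \leq k \leq n/2$, I would set $m := (k-1)/2$ and $l := n - 3m = n - \tfrac{3(k-1)}{2}$. The constraint $k\geq 3$ gives $m\geq 1$, and $k \leq n/2$ rearranges to $n \geq 2k$, which in turn gives $l - (m+1) \geq (k+3)/2 - (k+1)/2 = 1 > 0$; in particular $l \geq 0$ and $l > m+1$. Thus Theorem~\ref{thmSPL} applies in its other branch, producing $N(h) = 2m+1 = k$ and $U(h)=0$.

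There is no real obstacle here: this is arithmetic bookkeeping. What the argument confirms is that the intervals $[n/2, 2n/3]$ for even $k$ and $[3, n/2]$ for odd $k$ are precisely the ranges in which the substitutions $k = 2m$ and $k = 2m+1$ produce $(m,l)\in\Z_{\geq 1}\times \Z_{\geq 0}$ solving $n = 3m+l$ and landing on the correct side of $l = m+1$. The only care needed is around the endpoints $k = n/2$ and $k = 2n/3$, but a direct substitution shows that the branch inequality is always strict there, so the excluded value $l = m+1$ is never hit.
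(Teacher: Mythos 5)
Your proposal matches the paper's proof essentially verbatim: the paper also sets $m=k/2$, $l=n-3k/2$ for even $k$ and $m=(k-1)/2$, $l=n-3m$ for odd $k$, then verifies $m\geq 1$, $l\geq 0$, and the branch inequality $l<m+1$ or $l>m+1$ from the endpoint constraints on $k$. The bookkeeping is correct, including the observation that $l=m+1$ is excluded for integer parameters.
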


Now we turn our attention to non--admissible cases. Our computations show the following:

\begin{theorem}\label{thm_inadm_Littl} Pairs
\[
(2, 13),  (2, 19),  (2, 25),  (2, 31),  (11, 13), (17, 19), (23, 25), (29, 31)
\] are not Littlewood--admissible. Apart from these exceptions, for $n \leq 31$, every other valid pair is Littlewood--admissible.
\end{theorem}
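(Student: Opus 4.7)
The plan is to treat this as a computational verification that splits naturally into two halves: an admissibility witness search for all valid pairs $(k,n)$ with $n \leq 31$ not in the exceptional list, and an impossibility (exhaustion) argument for the eight listed pairs. Most of the admissibility half should be handled by invoking earlier results rather than by separate computation. Specifically, pairs with $k=1$ or $k=n-1$ are covered by Proposition \ref{Prop_Mu_Littlewood}; pairs with $k \in \{2, n-2\}$ and either $n \leq 12$ or $n \not\equiv 1 \pmod{6}$ fall under Theorem \ref{Thm_k2_Littlewood}; all pairs with $3 \leq k \leq 11$ (and by $f \mapsto f^*$ symmetry, $n-11 \leq k \leq n-3$) with $n \geq k+3$ are handled by Theorem \ref{Thm_k3to11_Littlewood}; and the middle range $k \in [12, n-12]$ for $n \leq 31$ is covered by Theorem \ref{mainLittlewoodThm} together with Theorem 2.1, Proposition 2.5 and Corollary 2.8 of \cite{BCFJ}. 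I would begin by tabulating $(k,n)$ with $2\le k\le n-2$, $n\le 31$, crossing off pairs handled by each of these results, and verifying that only the eight exceptional pairs remain uncovered on the admissibility side.

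For the impossibility half, the approach is exhaustive enumeration of $\LL_n$ for $n \in \{13, 19, 25, 31\}$, showing that no $f \in \LL_n$ simultaneously satisfies $N(f)=2$ and $U(f)=0$ (the corresponding statement for $k=n-2$ then follows by applying $f \mapsto f^*$, which swaps $N(f)$ with $N(f^*)=n-k$ and preserves $U(f)=0$). The naive search size is $2^n$, but one can cut this down by a constant factor using the Littlewood symmetries $f(z) \mapsto -f(z)$, $f(z) \mapsto f(-z)$, and $f(z) \mapsto f^*(z)$, reducing the orbit size by up to $8$; for $n=31$ this leaves on the order of $2^{28}$ representatives, well within the reach of a straightforward search. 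For each representative I would first test $U(f)=0$ via a Sturm-type or Schur--Cohn test applied to $f^*(z)f(z)$ restricted to the real axis through $z=e^{it}$, and then count zeros inside $\DD$ using a standard numerically stable Schur--Cohn or Jury table; any pair $(f, N(f))$ with $N(f) = 2$ and $U(f) = 0$ would contradict the theorem and so must fail to appear.

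The main obstacle, and the delicate issue, is reliability rather than depth: guaranteeing that the computer search for $n=31$ is both correct and complete. Two risks must be controlled. First, numerical root-counting near $\partial \DD$ can misclassify borderline zeros; this should be handled by performing the Schur--Cohn recurrence in exact integer arithmetic (all entries are rational with denominators that are products of leading Schur coefficients, so certified integer arithmetic is feasible for $n\le 31$), and bailing out to a singular-polynomial routine whenever a zero row is encountered, which flags $U(f)>0$ with certainty. Second, one must confirm the symmetry-orbit reduction does not accidentally drop a genuine witness; this is checked by re-running the search without reduction on at least one of the four degrees as a spot check. Once these are in place, the theorem follows by combining the clean list of admissibility witnesses (either pointed to in the earlier theorems or, for any residual pairs, produced explicitly by the same search code) with the negative certificates from the exhaustive enumeration.
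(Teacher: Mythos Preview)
Your proposal is correct in outline and arrives at the same conclusion, but it is more elaborate than what the paper actually does. The paper's proof is a single sentence: complete enumeration of $\LL_n$ for $2 \leq n \leq 31$, computing $N(f)$ and $U(f)$ for every $f$ via the Bistritz algorithm in exact rational arithmetic (Section~\ref{sec_exhaustive}). That one sweep simultaneously produces witnesses for every admissible pair and certifies that no witness exists for the eight exceptional pairs; there is no separate appeal to Theorems~\ref{Thm_k2_Littlewood}--\ref{mainLittlewoodThm} or to \cite{BCFJ}.

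Your decomposition into ``admissibility via earlier theorems'' plus ``targeted exhaustive search for $n\in\{13,19,25,31\}$'' is sound but does not quite close: a handful of pairs in the middle range escape all the cited results. For instance, $(12,29)$ and $(17,29)$ lie off the lines $n=2k\pm j$, $0\le j\le 4$, fail the gcd condition of \cite[Theorem~2.1]{BCFJ} (since $\gcd(12,30)=\gcd(18,30)=6$), and fall outside the parity ranges of Theorem~\ref{mainLittlewoodThm}. Your ``residual pairs'' clause rescues this, but once you are running the search code to supply those witnesses and also exhausting $\LL_{31}$ for the inadmissibility certificate, you have essentially reproduced the paper's computation; the theorem-citation layer buys little. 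One minor technical difference: the paper uses Bistritz's tabular test rather than Schur--Cohn, precisely because its treatment of the singular (unimodular-root) case is cleaner, which matters here since detecting $U(f)=0$ reliably is half the job.
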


We conjecture that

\begin{conjecture}\label{conjLittl} Pairs $(2, n)$ and $(n-2, n)$, where $n \equiv 1 \pmod{6}$ and $n \geq 13$,  are the only non--trivial Littlewood--inadmissible pairs.
\end{conjecture}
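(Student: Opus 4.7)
The plan is to prove the conjecture in two directions: first, the constructive direction that every non-trivial valid pair outside the family $\{(2,n), (n-2,n) : n \equiv 1 \pmod{6},\ n \geq 13\}$ is Littlewood-admissible; second, the obstruction direction that every pair in this family is inadmissible.

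For the constructive direction I would first tabulate the admissibility regions already established: Proposition \ref{Prop_Mu_Littlewood} covers $k \in \{1, n-1\}$; Theorem \ref{Thm_k2_Littlewood} covers $k \in \{2, n-2\}$ outside the exceptional congruence; Theorem \ref{Thm_k3to11_Littlewood} covers $k$ or $n-k$ in $[3,11]$; Theorem \ref{mainLittlewoodThm} together with Corollary \ref{colSPL} covers a wide middle range; and the sign-change and middle-negation results of \cite{BCFJ} cover several diagonal bands and the lines $n = 2k + c$ for small $\abs{c}$. Under the reciprocation symmetry $(k,n) \mapsto (n-k, n)$, the union of these regions leaves only a relatively narrow gap, concentrated near the edges $k \approx 12$ and $k \approx n-12$ and in small residue classes on the second diagonal. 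To close this gap, I would generalise the pattern of Theorem \ref{thmSPL}: look at blocks of the form $(+-+)^m (+)^{l_1} (-)^{l_2}$ and their mirror images, express each $h(z)$ as a rational function as in \eqref{eqSPL}, and apply the argument principle on a circle $\abs{z} = 1-\eps$ to compute $N(h)$ and confirm $U(h)=0$. Tuning the parameters $m, l_1, l_2$ should realise every remaining pair.

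For the obstruction direction I would analyse the factorisation of a hypothetical $f \in \LL_n$ with $N(f)=2$ and $U(f)=0$. Writing $f$ as a product of monic integer irreducibles up to sign, each factor $p$ satisfies $\prod_{\al} \abs{\al} = 1$ because its constant term and leading coefficient are both $\pm 1$. Since $U(f)=0$, the irreducible factors contributing to $N(f)$ split into two cases: (i) a single irreducible of degree $\geq 3$ with exactly two roots in $\DD$ and the rest outside, or (ii) two irreducibles each of reciprocal-Pisot type, each contributing one root in $\DD$. The quadratic sub-case of (i) is ruled out by the product-of-roots identity. Mukunda's classification \cite{Mu1, Mu2} of Littlewood Pisot polynomials then lists the allowable factors in case (ii) together with their degree-parity constraints. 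I would combine these degree constraints with compatibility conditions on $f(1)$, $f(-1)$, and on the values of $f$ at small roots of unity, all of which are controlled modulo small integers because the coefficients lie in $\{-1,1\}$. These mod-$2$ and mod-$3$ conditions should single out $n \equiv 1 \pmod{6}$ as the unique residue class in which neither case can be realised for $n \geq 13$.

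The main obstacle is the obstruction direction, and specifically case (i): one needs a substitute for Mukunda's theorem that covers non-Pisot integer polynomials whose only roots in $\DD$ form a single conjugate pair $\{\al, \bar\al\}$. A promising route is to bound the Mahler measure of any factor of a Littlewood polynomial in terms of $n$, translate this into lower bounds on $\abs{\al}$, and then use a Jensen-type calculation together with the $\{-1, 1\}$ coefficient restriction to derive the congruence. The constructive direction, although combinatorially lengthy, should reduce to a finite enumeration of pattern families of the same kind already used in the proof of Theorem \ref{thmSPL}, with each family handled by a uniform argument-principle computation.
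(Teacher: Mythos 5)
This statement is a \emph{conjecture}; the paper itself does not prove it. The only evidence offered is computational: Theorem \ref{thm_inadm_Littl} establishes (by exhaustive search of $\LL_n$ for $n \leq 31$) that for $n\leq 31$ the listed pairs are inadmissible and all other valid pairs are admissible, and Theorem \ref{Thm_k2_Littlewood} observes that the four regular families in Table \ref{patternsLittlewood2} realise $N(f)=2$, $U(f)=0$ precisely in the residue classes $n \not\equiv 1 \pmod 6$. The authors explicitly leave the general statement open, so there is no paper proof to compare your attempt against; you are outlining what a proof would require.

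Viewed as such an outline, your constructive direction is plausible but entirely deferred: you would have to actually exhibit and verify, via Proposition \ref{propBoyd} or an argument-principle computation, pattern families covering the residue gaps left after combining Theorem \ref{Thm_k3to11_Littlewood}, Theorem \ref{mainLittlewoodThm}, Corollary \ref{colSPL} and the bands from \cite{BCFJ}. The more serious problem is in your obstruction direction, and it is a concrete error. You invoke Mukunda's classification \cite{Mu1, Mu2} to constrain the irreducible factors of a hypothetical $f \in \LL_n$ with $N(f)=2$, $U(f)=0$. Mukunda classifies Littlewood polynomials that \emph{are themselves} (minimal polynomials of) Pisot numbers; an irreducible integer factor of a Littlewood polynomial need not be a Littlewood polynomial at all -- its coefficients can be arbitrarily large, even though its leading coefficient and constant term are $\pm 1$. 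So the classification you cite says nothing about case (ii), and there is at present no classification of integer Pisot (or complex-Pisot) polynomials that divide some Littlewood polynomial; the delicacy of such divisibility questions is exactly what \cite{DJ, DJS, DJKJ} document. Your subsequent steps -- the ``Jensen-type calculation'' giving a lower bound on $\abs{\al}$ and the ``mod-$2$ and mod-$3$ conditions'' that are supposed to single out $n \equiv 1 \pmod 6$ -- are precisely where the content of a proof would have to live, and they are asserted rather than derived. As it stands the proposal does not close either direction, and the conjecture remains open.
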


Figure \ref{gridL} depicts our new results in the Littlewood case in conjunction with earlier results from \cite{BCFJ}.

\begin{figure}
\centering
\subcaptionbox{$f \in \NN_n$ \label{gridN}}{\includegraphics[width=0.75\linewidth]{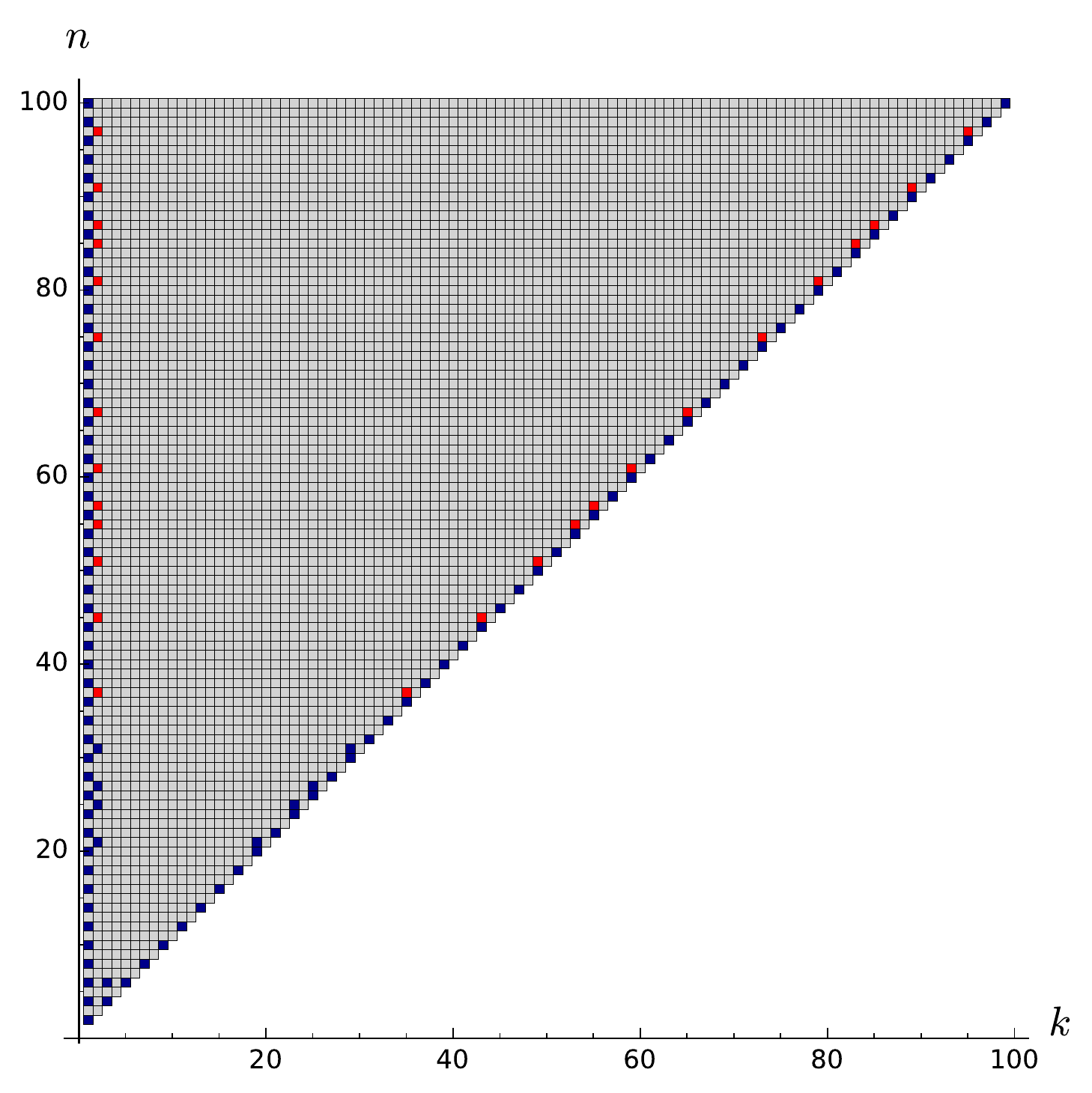}}
\subcaptionbox{$f \in \LL_n$ \label{gridL}}{\includegraphics[width=0.75\linewidth]{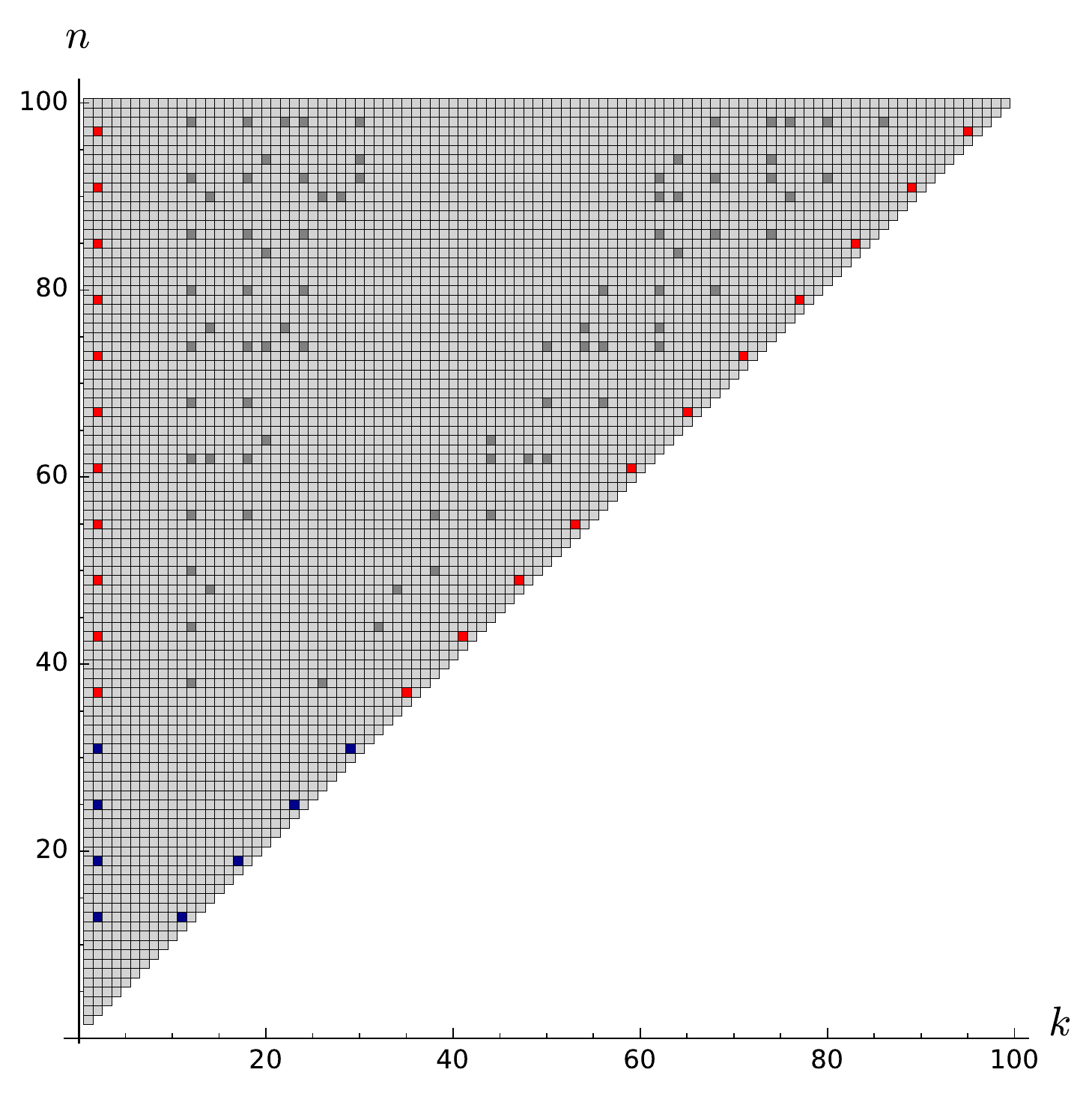}}
\vfill
\caption{Newman and Littlewood--admissibility of pairs $(k, n) \in \N^2$ for $n \leq 100$. Grey grid cells represent proved admissible pairs; dark grey cells -- conjectured admissible, dark blue cells -- proved inadmissible, red cells -- conjectured inadmissible cases.}\label{gridNewman}\label{gridNL}
\end{figure}

\section{Connection to complex Pisot numbers}\label{sec_complex_Pisot}

Recall that a real algebraic integer $\alpha > 1$ is called a \emph{Pisot number} \cite{PisotBook, DP2}, if all its conjugates $\al' \ne \al$ over $\Q$ satisfy $\abs{\al'}<1$. For every Pisot number $1 < \alpha < \frac{1+\sqrt{5}}{2}$ we have that $-\alpha$ is a root of some Newman polynomial that might be reducible in $\Z[z]$, see \cite{HM}. 

In the same way, an algebraic integer $\be \in \C \setminus \R$, $\abs{\be} > 1$ is called \emph{a complex Pisot number}, if its conjugates $\be' \not\in \{\be, \overline{\be}\}$ satisfy $\abs{\be'}<1$. They are less known than their real counterparts; it seems that they were first studied by Kelly \cite{Kelly} and later by Samet \cite{Sam}. Chamfy \cite{C1, C2} characterized smallest complex Pisot numbers, thereby extending the previous work of Pisot and Dufresnoy \cite{DP2}.  More recently, the list of small known complex Pisot numbers was expanded significantly by Garth \cite{G1, G2}. Additionally, Blumenstein, Lamarche and Saunders  computed some small complex Pisot numbers that appear among the roots of  $\{0, 1\}$ polynomials, \cite{BLS}; some results of theirs were partially published in master's thesis \cite{Saund}.

As discussed in Section \ref{sec_main}, for polynomials in $\NN$ and $\LL$, all inadmissible pairs $(k, n) \in \N^2$, apart from single exception $(3, 6)$ in the Newman case, come from $k \in \{1, 2, n-2, n-1\}$. It is easy to see that the case $k=n-1$ and $k=1$ corresponds to the irreducible (in $\Z[z]$) polynomials that are minimal polynomials of Pisot numbers and their inverses; their respective admissibility or inadmissibility was proved by Mukunda \cite{Mu1, Mu2, Mu3}.

Thus,  special attention was devoted to identify admissible and inadmissible cases  for $k=2$ and $k=n-2$. By using a pattern search that is outlined in Section \ref{sec_heuristic}, the candidate patterns to the infinite sequences of $f \in \NN_n$ and $f \in \LL_n$ with $N(f)=2$ and $U(f)=0$ were found. Polynomials $f(z)$ that belong to one of these sequences will be referred as \emph{regular} Newman and Littlewood polynomials; all other polynomials with $N(f)=2$, $U(f)=0$ that do not belong to any of these sequences will be called \emph{sporadic}. The values $N(f)=2$ and $U(f)=0$ for these regular polynomials were rigorously proved by carrying out the procedure described in Section \ref{sec_autoproof}.

An exhaustive computation of numbers $N(f)$ and $U(f)$ for $f \in \NN_n$, $n \leq 35$  and for $f \in \LL_n$ for $n \leq 31$ was performed to double check if heuristic searches did not miss any regular sequences, and also to find sporadic examples (as many as possible).  We expect that at this point we already have found all regular sequences, and, with somewhat lesser degree of confidence, all sporadic complex--Pisot polynomials in $\NN$ and $\LL$, although this, of course, remains to be proved.

\subsection{Complex Pisot polynomials in $\NN$}

\subsubsection{Regular sequences.}  Exactly $10$ infinite sequences of polynomials $f(z) \in \NN$, satisfying $N(f)=2$, $U(f)=0$,  all presented in Table \ref{patternsNewman2}, were found in $\NN$.  All of them are minimal polynomials of inverse complex Pisot numbers. Out of these $10$, only the sequence no.~2 seems to have been recorded previously  by Garth  in Table 1 of \cite{G2} (naturally, one has to take $h^*(z)$ in place of our $h(z)$). In $8$ cases the limit polynomials $f(z)$ from our Table \ref{patternsNewman2} were essentially known to Garth: they coincide with the limit polynomial $f^*(z)$ or $f^*(-z)$ in Table 1 of \cite{G2}. The remaining $2$ cases yield new limit points that previously were not registered on Garth's list.  One set of limit points arise (after the transformation $z \to z^{-1}$) from the roots of the limit polynomial $f(z)=1-z+2z^2-z^3$ of the sequence no.~4 from Table \ref{patternsNewman2}. These points are $\{\be^{-1/2}, -\be^{-1/2}\}$, where $\be$ stands for a single negative real zero of a limit polynomial from the entry no.~3  in Garth's Table 1. Another, completely new set of limit points, not associated to any polynomial from Garth's list \cite{G2}, arises from $f(z)=1+z+z^4-z^5$ in the entry no.~6 of our Table \ref{patternsNewman2}.

One should note that zeros of Newman polynomials from entries no.~8 and no.~9 from Table \ref{patternsNewman2} inside $\DD$ converge to the set of limit points $\{\ga, \overline{\ga}\}$, and $\{-\ga, -\overline{\ga}\}$, respectively; here $\ga$ is one of the zeros of limit polynomial $f(z)= 1 - z + z^2 + z^3 -z^4$ inside $\DD$. This is a non--trivial example of a non-zero complex number, such that both the number and its negative are limit points of roots of Newman polynomials.

\subsubsection{Sporadic instances.} Most of the sporadic Newman polynomials occur below $n \leq 22$, except for one remarkable finite family that originates from the pattern $11(101)^l(001)^m$. Eight choices of exponents $l, m$ in this pattern produce $f(z)$ with $N(f)=2$, $U(f)=0$:
\[
(l, m)=(0, 1), (1, 0), (1, 1), (2, 1), (3, 2), (4, 3), (5, 4), (6, 5).
\]
The longest of these exceptional patterns with $l=6$, $m=5$ correspond to $f(z) \in \NN_{34}$. We are inclined to believe that this peculiar polynomial is last possible sporadic example in $\NN$ and no more exists for $n \geq 35$.

\subsection{Complex Pisot polynomials in $\LL$}
\subsubsection{Regular sequences.} There are $16$ possible sequences of regular Littlewood polynomials with $N(f)=2$ and $U(f)=2$, each corresponding to one the $4$ normalized families from Table \ref{patternsLittlewood2} via transformations $f(z) \to \pm f(\pm z)$. Polynomials $f(z)$ from sequence no.~1 from Table \ref{patternsLittlewood2}  each has $k=2$ real roots inside $\DD$; the set of limit points for these roots is $\{1/\sqrt{2}, -1/\sqrt{2}\}$. The remaining regular Littlewood polynomials that correspond to sequences no.~2, 3, 4 in Table \ref{patternsLittlewood2} are minimal polynomials of inverse complex Pisot numbers. None of the entries in Table \ref{patternsLittlewood2} nor their limit polynomials appear in \cite{G2}.

\subsubsection{Sporadic instances.} The number of sporadic examples in $\LL$ is much smaller than in $\NN$: the last found sporadic $f(z)$ was in  $\LL_{16}$, they seem to run out for $n \geq 17$.

\section{Statistical distribution of $N(f)$ in  $\NN$ and $\LL$}\label{sec_stats}

Throughout this section, let $a_0$, $a_1$, $\dots$, $a_n$ be  $n+1$ independent, identically distributed discrete random variables.
If each $a_j$ is a Bernoulli random variable, taking values $0$ or $1$ with equal probability $p=1/2$, then the polynomial
\[
f(z) = 1 + a_1z + \dots a_{n-1}z^{n-1} + z^n
\]
will be called \emph{a random Newman polynomial} of degree $n$. Likewise, if $b_j$ are Rademacher random variables, that is, random variables taking values $-1$, $1$ with equal probability $p=1/2$, then the polynomial
\[
g(z) = b_0 + b_1z + b_2z +\dots + b_n z^n
\] will be called \emph{a random Littlewood polynomial}. In the Newman case $f(z)$ depends on $n-1$ random coefficients, whereas in the Littlewood case $g(z)$ depends on $n+1$) random coefficients. Note that $f(z)$ and $g(z)$ are uniformly distributed in $\NN_n$ and $\LL_n$, respectively, according to the definitions in equation \eqref{defNL}.

For such random polynomials $f(z)$ and $g(z)$, the unit disk root-counting functions $N(f)$ and $N(g)$ themselves become a random variables that take values $k \in \{0, 1, \dots, n-1\}$. Thus, one can speak of the probabilities
\[
p_k = P\left(N(f) = k\right) = \#\{f \in \NN_n: N(f)=k\} \big/ \#{\NN_n},
\]
or
\[
p_k = P\left(N(g) = k\right) = \#\{g \in \LL_n: N(g)=k\} \big/ \#{\LL_n}.
\]

Since we are mostly interested in polynomials with zeros inside $\DD$ and no zero on $\partial \DD$, it makes sense to condition the random polynomials so that $U(f)=U(g)=0$. Let us define:
\[
\widetilde{\NN}_n = \{f \in \NN_n: U(f)= 0\}, \qquad \widetilde{\LL}_n = \{g \in \LL_n: U(g)= 0\},
\] and the corresponding conditional probabilities:
\[
\widetilde{p}_k = P\left(N(f) = k \mid U(f)=0 \right) = \#\{f \in \widetilde{\NN}_n: N(f)=k\} \big/  \#\widetilde{\NN}_n,
\]
and
\[
\widetilde{p}_k = P\left(N(g) = k \mid U(g)=0 \right) = \#\{g \in \widetilde{\LL}_n: N(g)=k\} \big/  \#\widetilde{\LL}_n.
\] The zero-counting random variable $N(f)$ under the condition $U(f)=0$ will be denoted by $\widetilde{N}(f)$ (resp. $\widetilde{N}(g)$). Since $N(f) + N(f^*) \leq n$ with `$=$' only for $f(z)$ with $U(f)=0$, it is easily seen that, for $n \geq 3$, $E\widetilde{N}(f) = n/2$, $EN(f) < n/2$. The same holds true if one replaces $f(z)$ by $g(z)$.

The least--squares fit of the line $k=\gamma n$ to our data depicted in Figure \ref{meanvarN} yields
$EN(f) \approx 0.49050n$, for $f \in \NN_n, 15 \leq n \leq 35$  and $EN(g) \approx 0.49389n$, for $g \in \LL_n$, $15 \leq n \leq 31$. From Figure \ref{meanvarN}, least--squares fit of the line $k=\alpha n+\beta$ to the values of data points representing the variances yields crude estimates
\[
\variance{N(f)} \approx 0.15499n + 0.97761, \quad \variance{\widetilde{N}(f)} \approx 0.18087n + 0.00424
\] for random Newman polynomials, and
\[
\variance{N(g)} \approx 0.12812n + 0.45214, \quad \variance{\widetilde{N}(g)} \approx 0.14154n + 0.00966, 
\] for random Littlewood polynomials. In above estimates, we removed the data for degrees $n \leq 14$ because of larger initial oscillations in variance.  Even with this precaution, these estimates match the true values of $\al$, $\be$ and $\ga$ in no more than first or second decimal after `.' -- this is due to the relatively small values of $n$.

Comparing the distributions of $N(f)$ and $\widetilde{N}(f)$  in top and bottom histograms in Figure \ref{histNewman}, one sees that $N(f)$ is slightly skewed to the left, while $\widetilde{N}(f)$ is perfectly symmetric. Similar skew occurs also in the Littlewood case, but it is hardly noticeable for certain degrees, in particular, when $n+1$ is a prime number or a product of at most two primes (cf. $n=30$ and $n=31$ degree cases in \ref{histLittlewood}). The later phenomenon seems to be related to the restrictions of factoring a $\{-1, 1\}$ polynomial $\pmod{2}$, which leaves much less possibilities for a Littlewood polynomials to have a non-trivial divisor in $\Z[z]$ with unimodular zeros.

The distributions of $N(f)$ and $\widetilde{N}(f)$ in $\NN_n$ differ considerably for even and odd $n$. The shortage of Newman polynomials of even degree with an odd number $N(f)$ of zeros inside $\DD$ is evident in histograms \ref{histNewmanA}, \ref{histNewmanC}, \ref{histNewmanD} and \ref{histNewmanF}. For $f \in \NN_{2n}$, $N(f)$ is odd precisely when $f(-1) \leq 0$, which means that there should be at least as many, or more $z^{2j-1}$ terms than $z^{2j}$ terms in $f(z)$; this seems to severely reduce the number of choices and creates artifacts in histograms for even $n$. There appears to be no such artifacts in the odd degree Newman case, cf. \ref{histNewmanB}, \ref{histNewmanE}. In $\LL_n$, there are no visible differences between even and odd $n$ in Histogram \ref{histLittlewood}. 
\begin{figure}
\centering
\subcaptionbox{$N(f)$, $f \in \NN_n$, for $2 \leq n \leq 35$}{\includegraphics[width=0.495\linewidth]{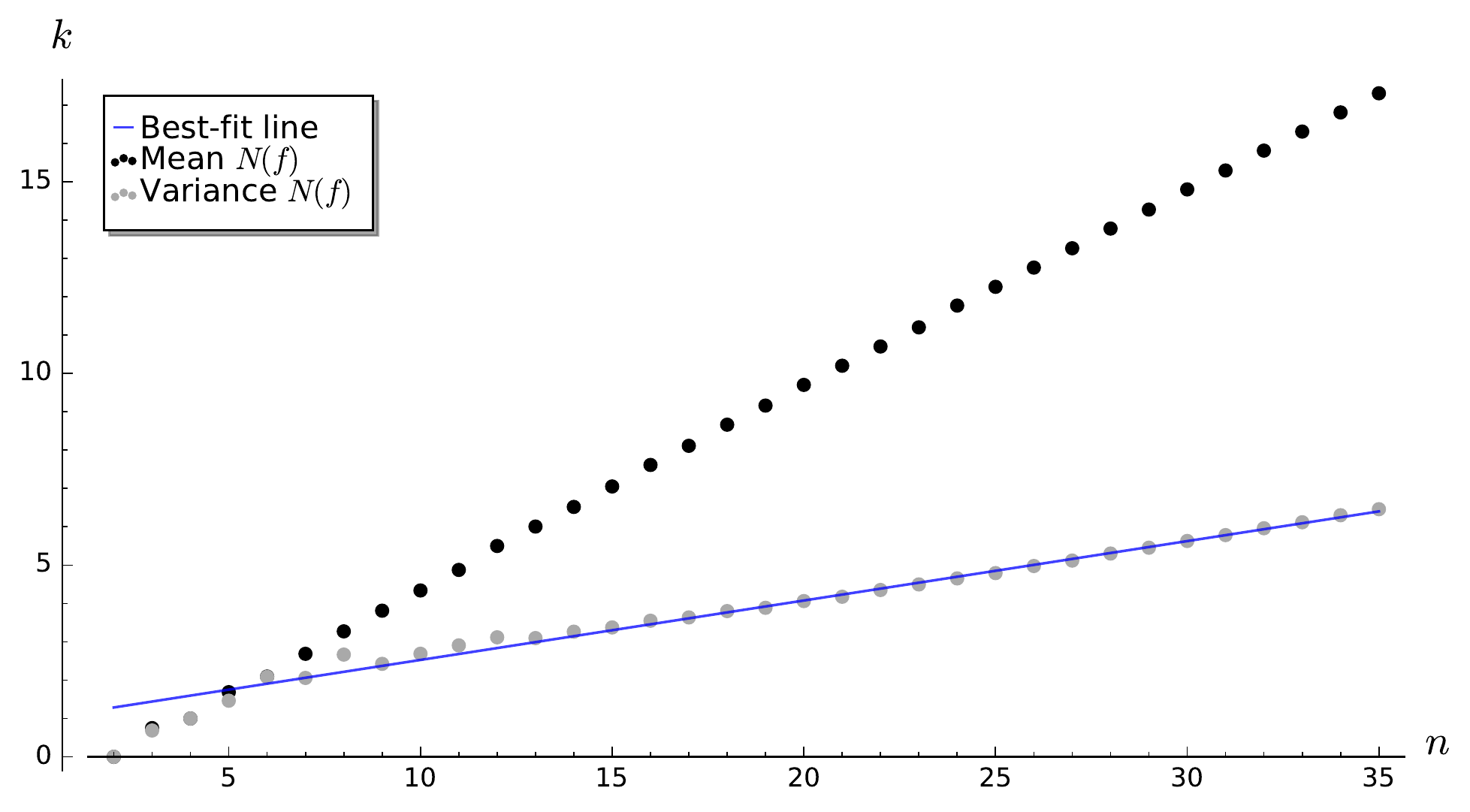}}
\hfill
\subcaptionbox{$\widetilde{N}(f)$, $f \in \widetilde{\NN}_n$, for $3 \leq n \leq 35$}{\includegraphics[width=0.495\linewidth]{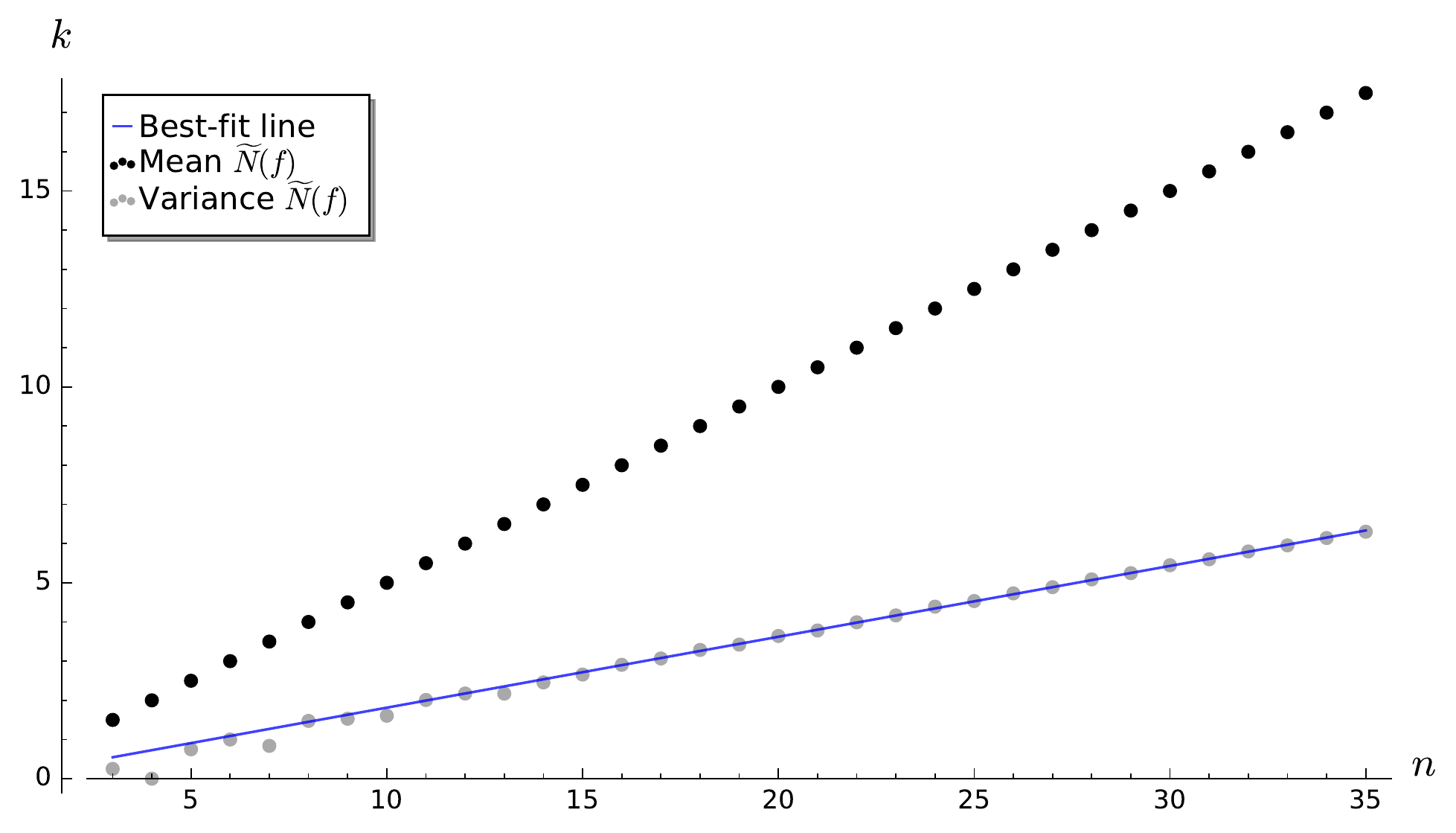}}
\vfill
\subcaptionbox{$N(g)$, $g \in \LL_n$, for $2 \leq n \leq 31$}{\includegraphics[width=0.495\linewidth]{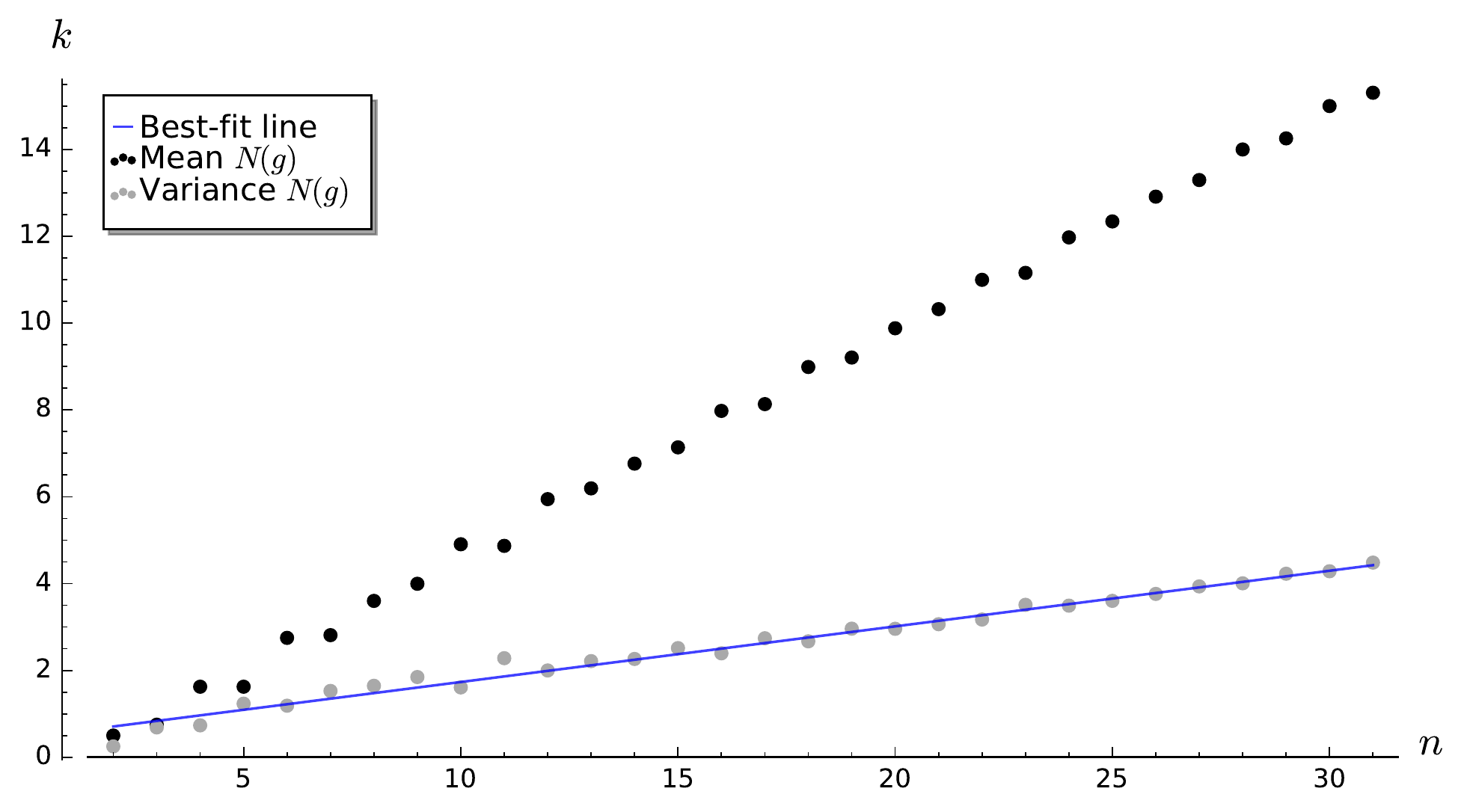}}
\hfill
\subcaptionbox{$\widetilde{N}(g)$, $g \in \widetilde{\LL}_n$, for $2 \leq n \leq 31$}{\includegraphics[width=0.495\linewidth]{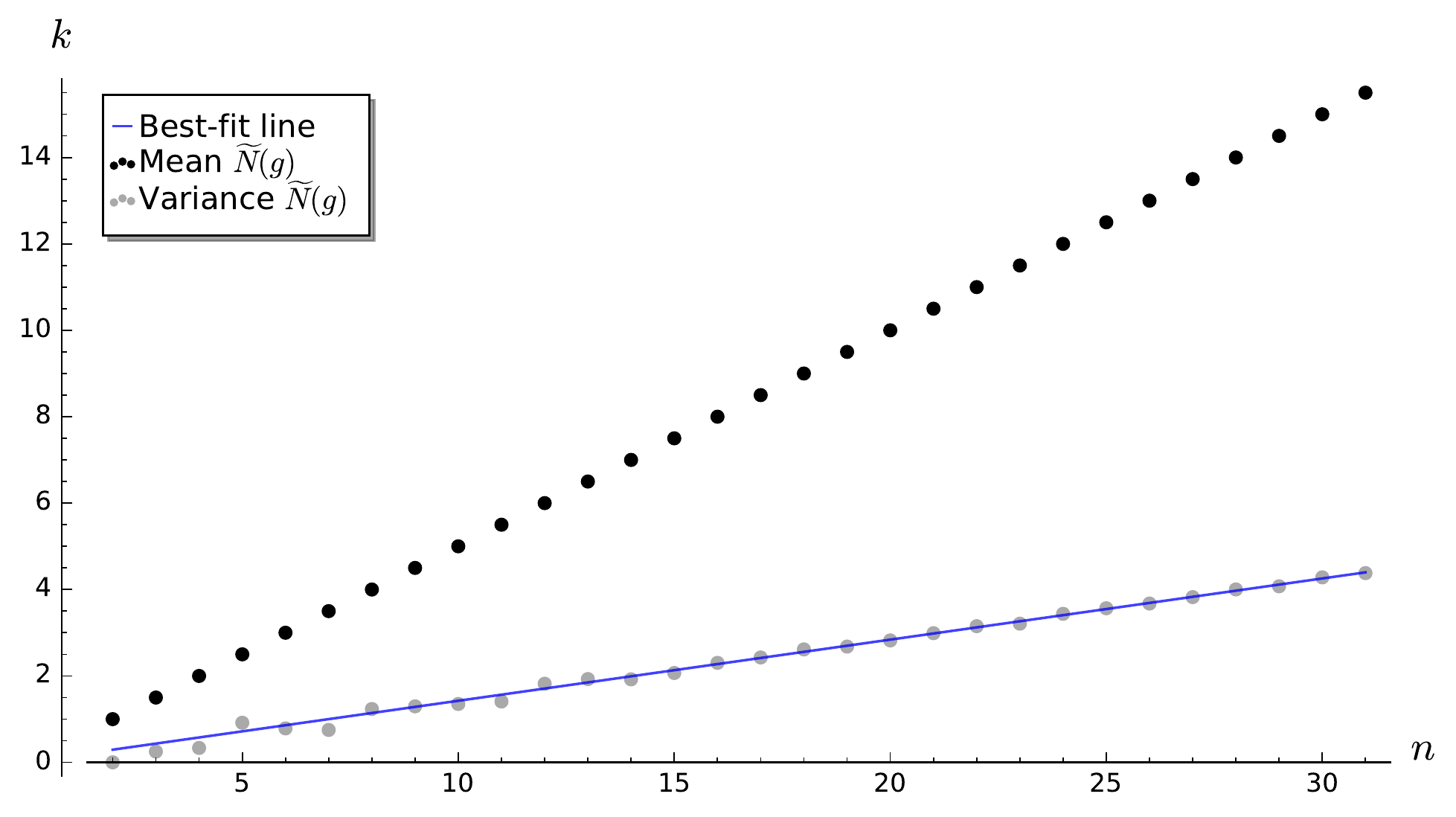}}
\caption{Means (black) and variances (gray)  of $N(f)$ and $N(g)$ for $f \in \NN_n$, $g \in \LL_n$, with/without unimodular zeros, together with lines of best-fit (blue) for variances.}\label{meanvarN}
\end{figure}
\begin{figure}
\centering
\subcaptionbox{$N(f)$, $f \in \NN_{30}$ \label{histNewmanA}}{\includegraphics[width=0.32\linewidth]{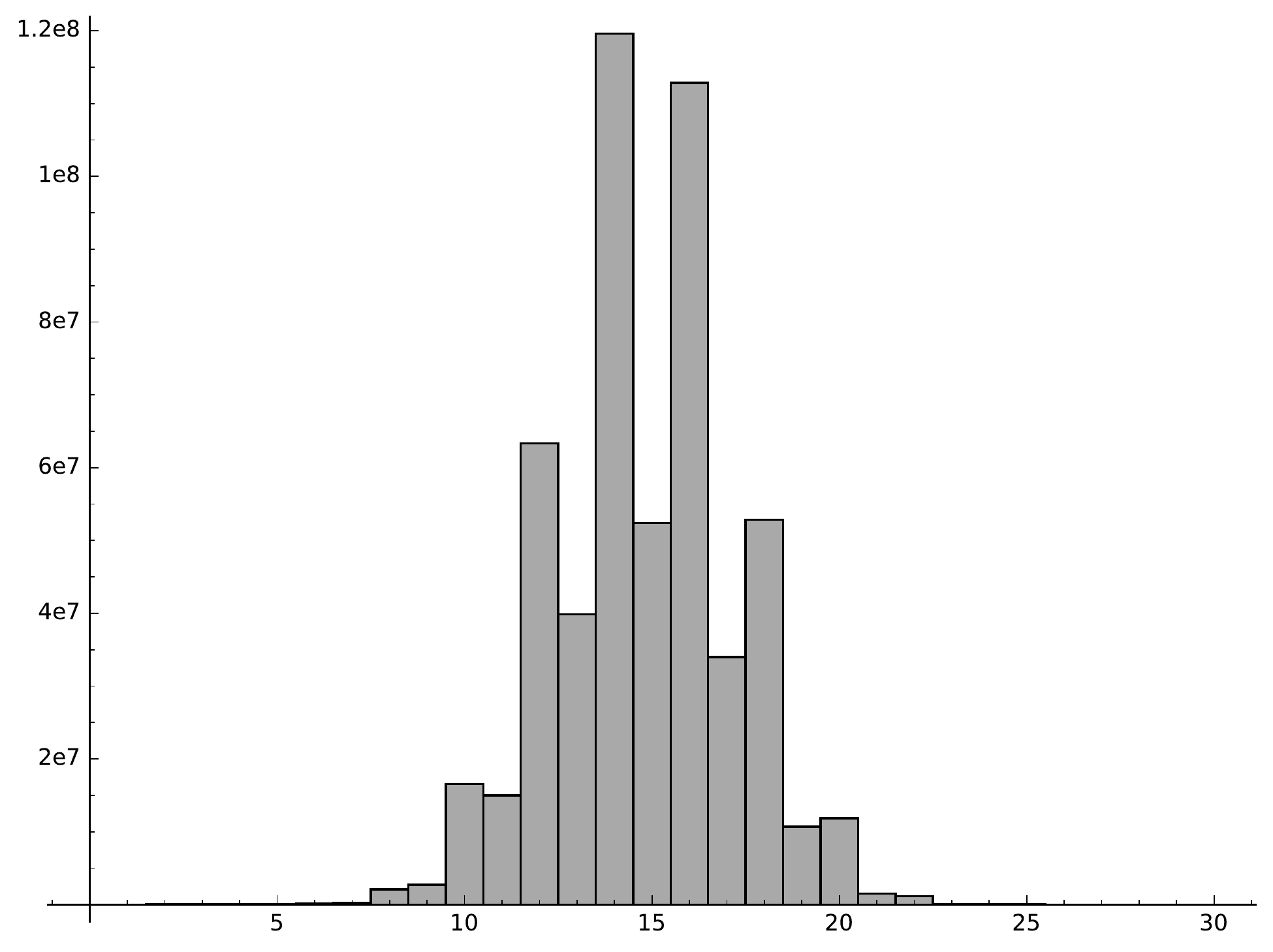}}
\subcaptionbox{$N(f)$, $f \in \NN_{31}$ \label{histNewmanB}}{\includegraphics[width=0.32\linewidth]{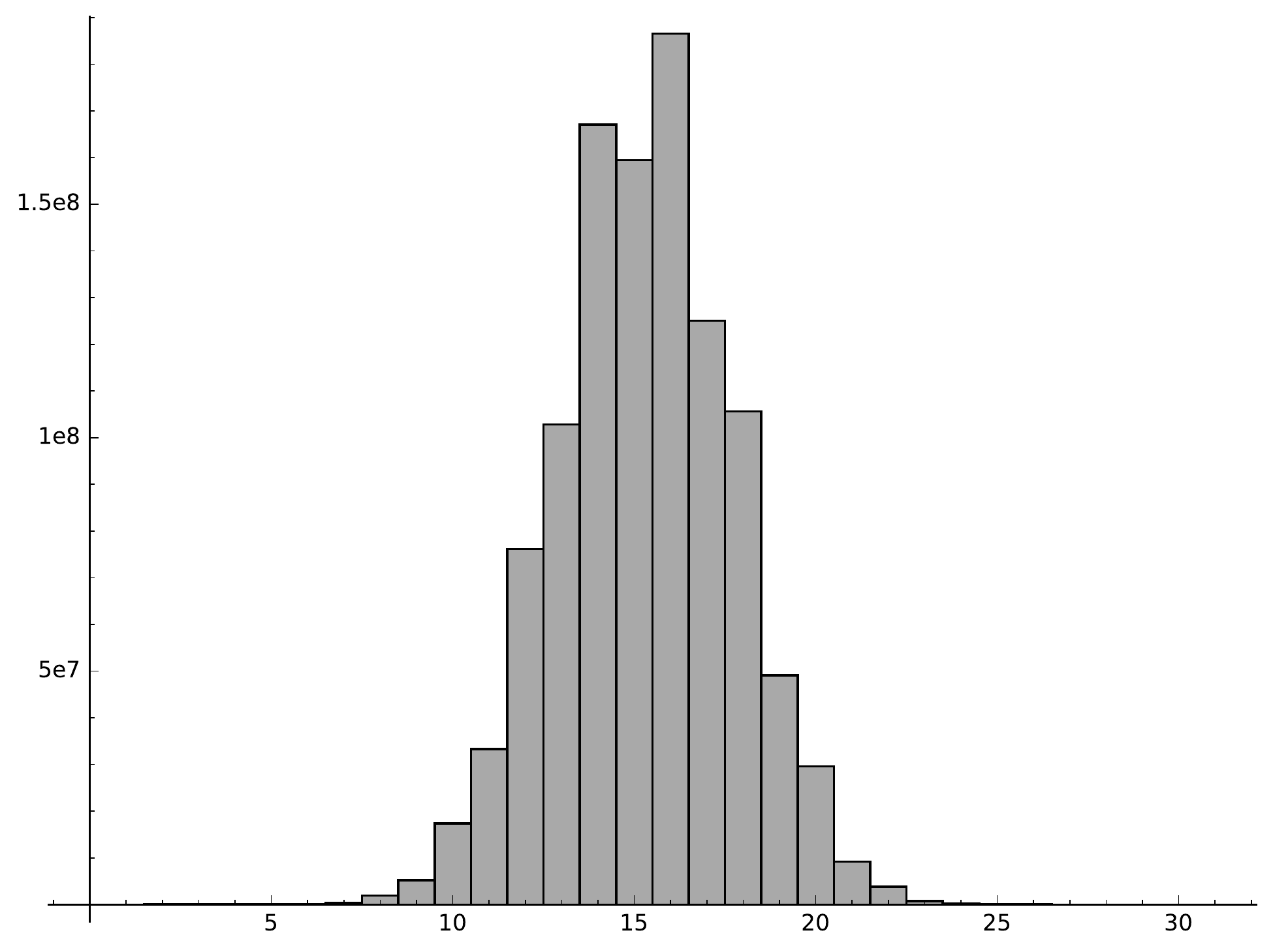}}
\subcaptionbox{$N(f)$, $f \in \NN_{32}$ \label{histNewmanC}}{\includegraphics[width=0.32\linewidth]{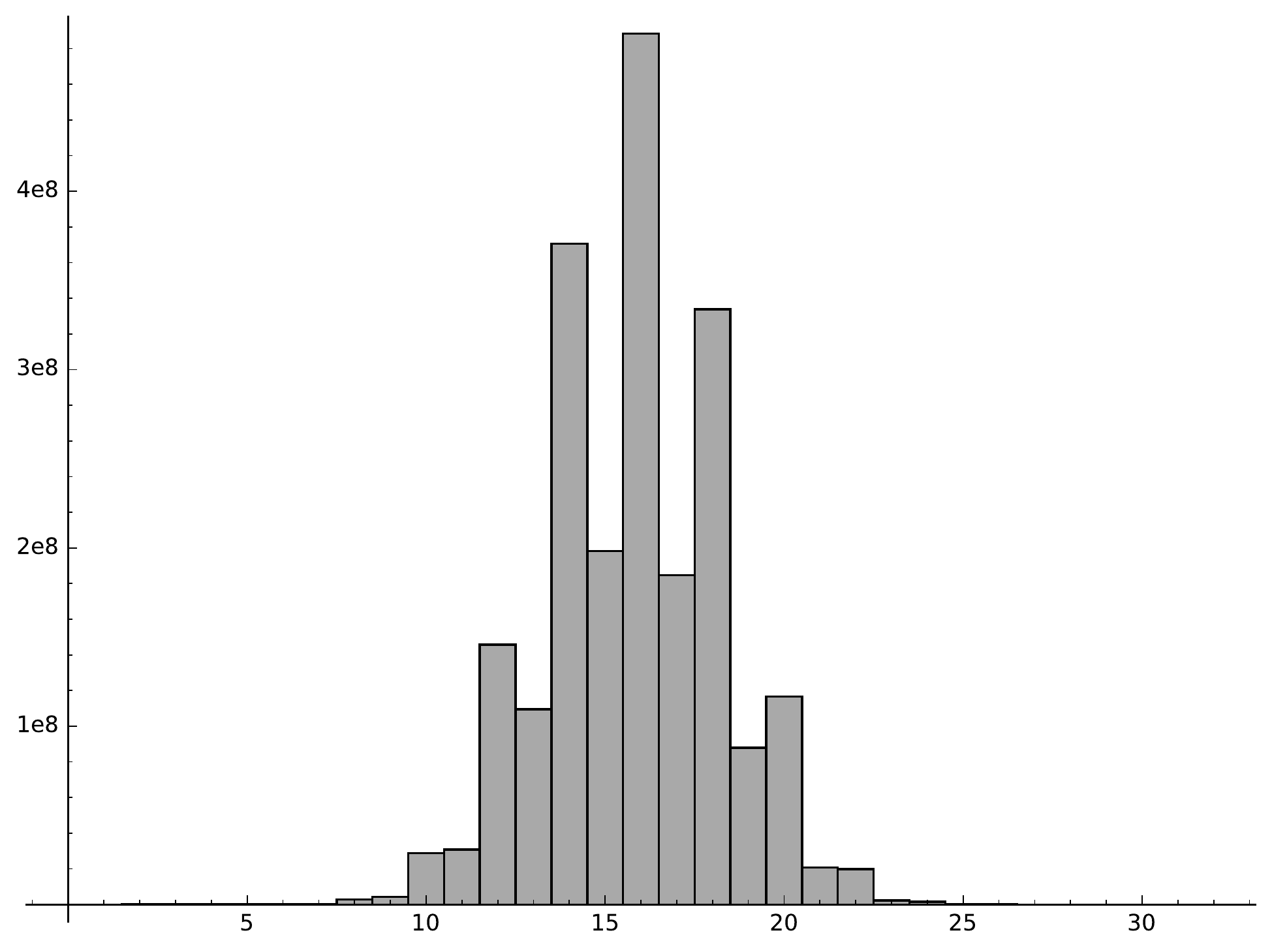}}
\vfill
\subcaptionbox{$\widetilde{N}(f)$, $f \in \widetilde{\NN}_{30}$ \label{histNewmanD}}{\includegraphics[width=0.32\linewidth]{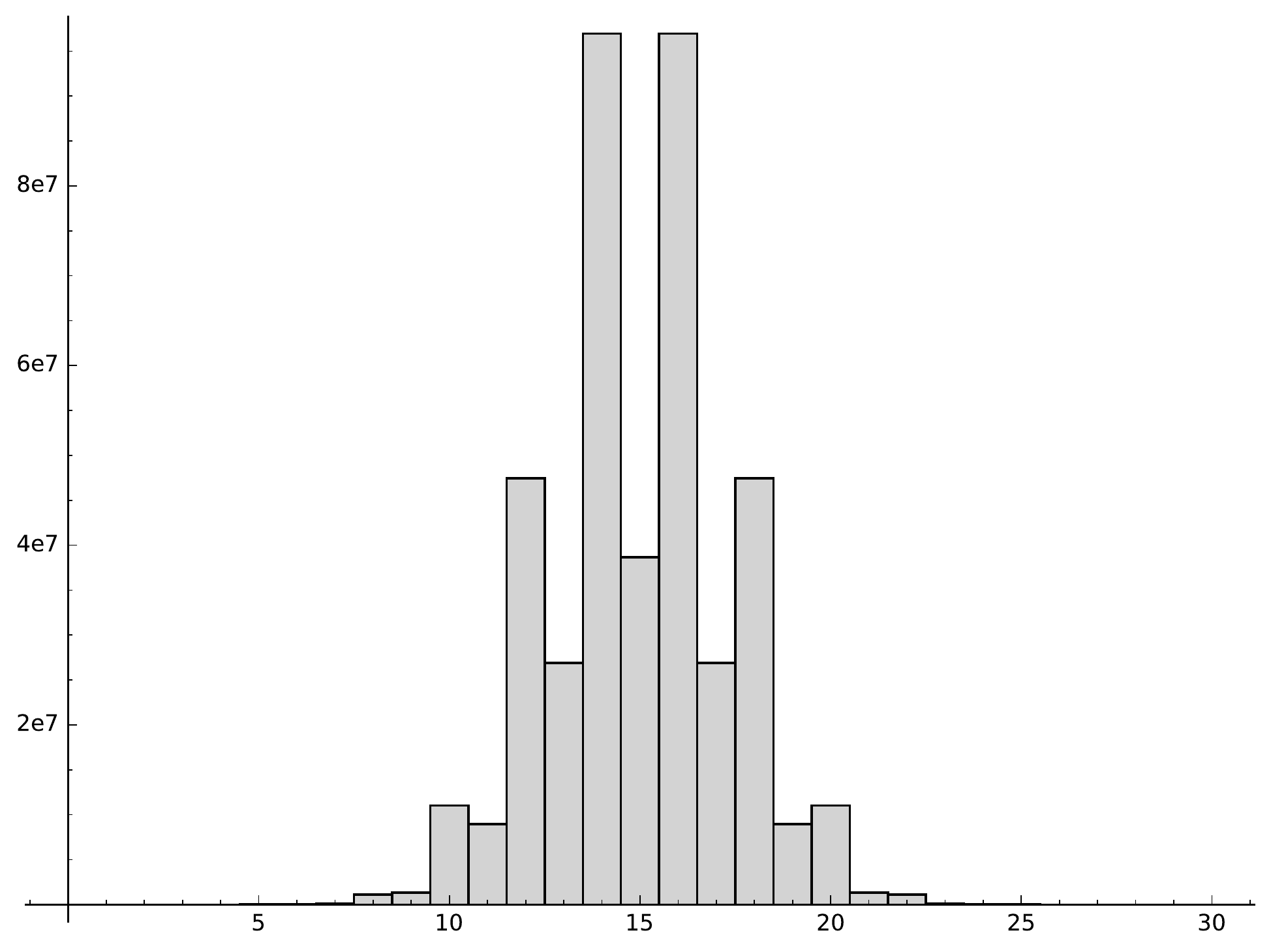}}
\subcaptionbox{$\widetilde{N}(f)$, $f \in \widetilde{\NN}_{31}$ \label{histNewmanE}}{\includegraphics[width=0.32\linewidth]{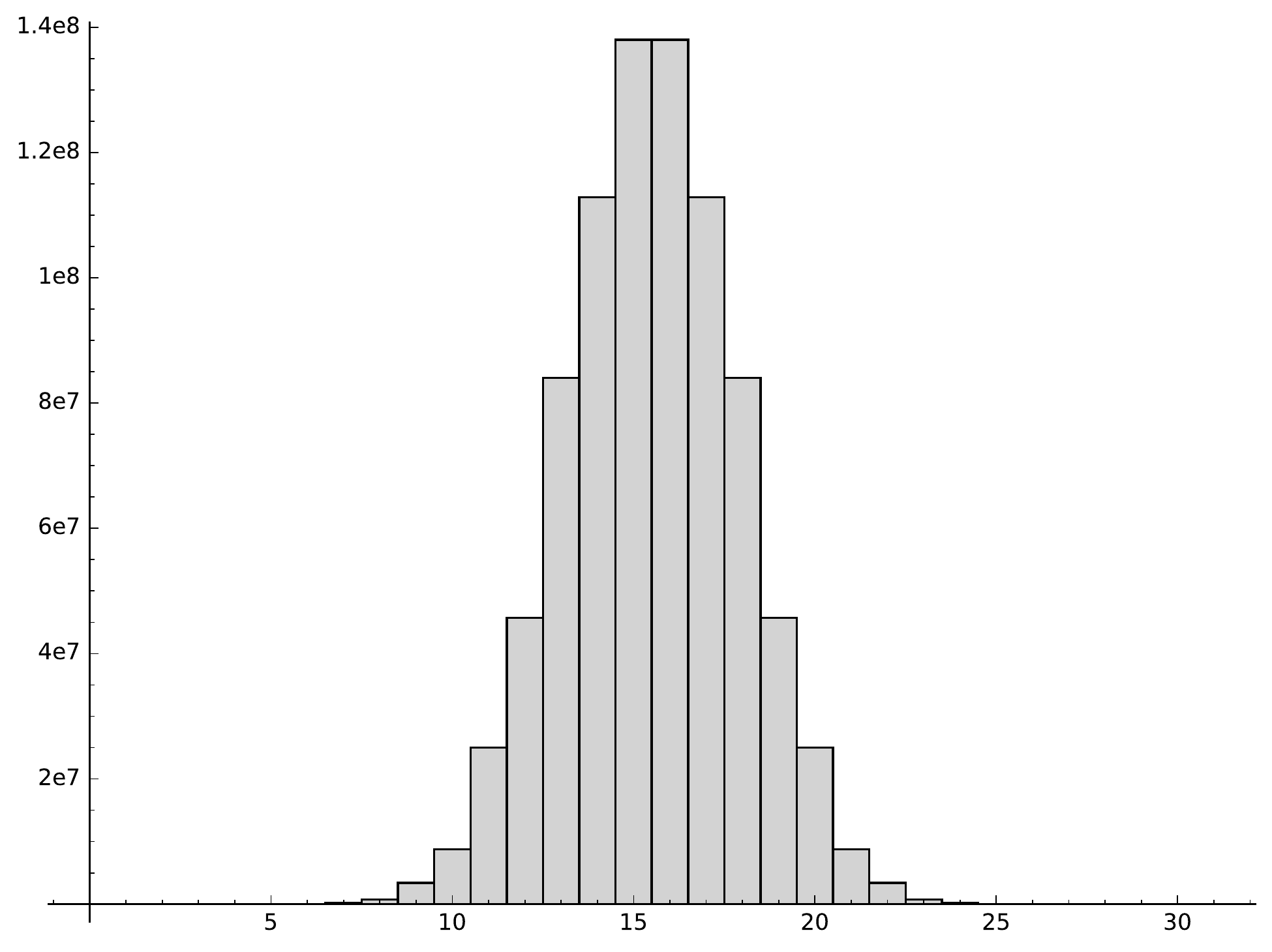}}
\subcaptionbox{$\widetilde{N}(f)$, $f \in \widetilde{\NN}_{32}$ \label{histNewmanF}}{\includegraphics[width=0.32\linewidth]{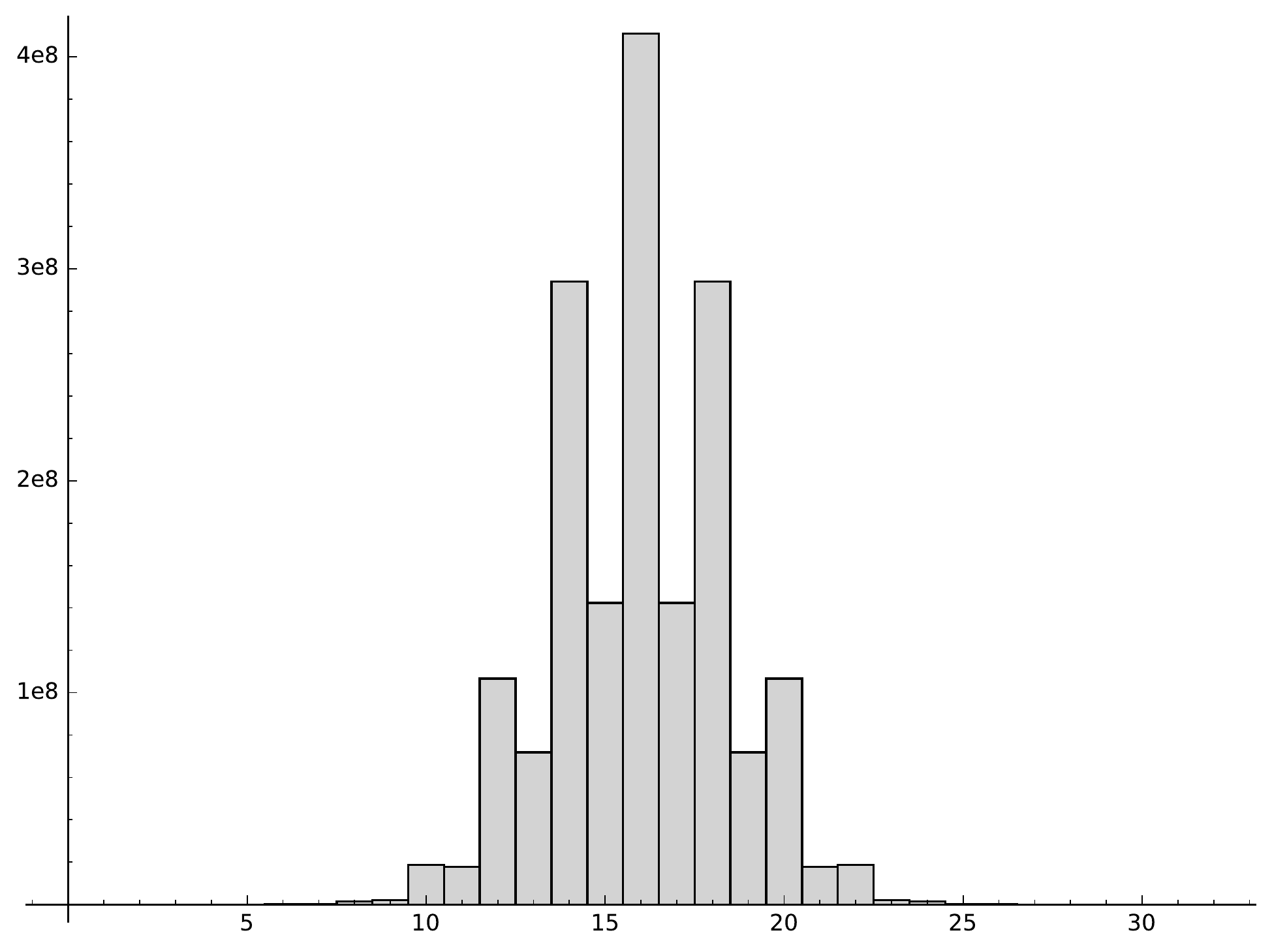}}
\caption{$N(f)$, $f \in \NN_n$ and $\widetilde{\NN}_n$, for $n=30, 31, 32$}\label{histNewman}
\end{figure}
\begin{figure}
\centering
\subcaptionbox{$N(g)$, $g \in \LL_{30}$ \label{histLittlewoodA}}{\includegraphics[width=0.24\linewidth]{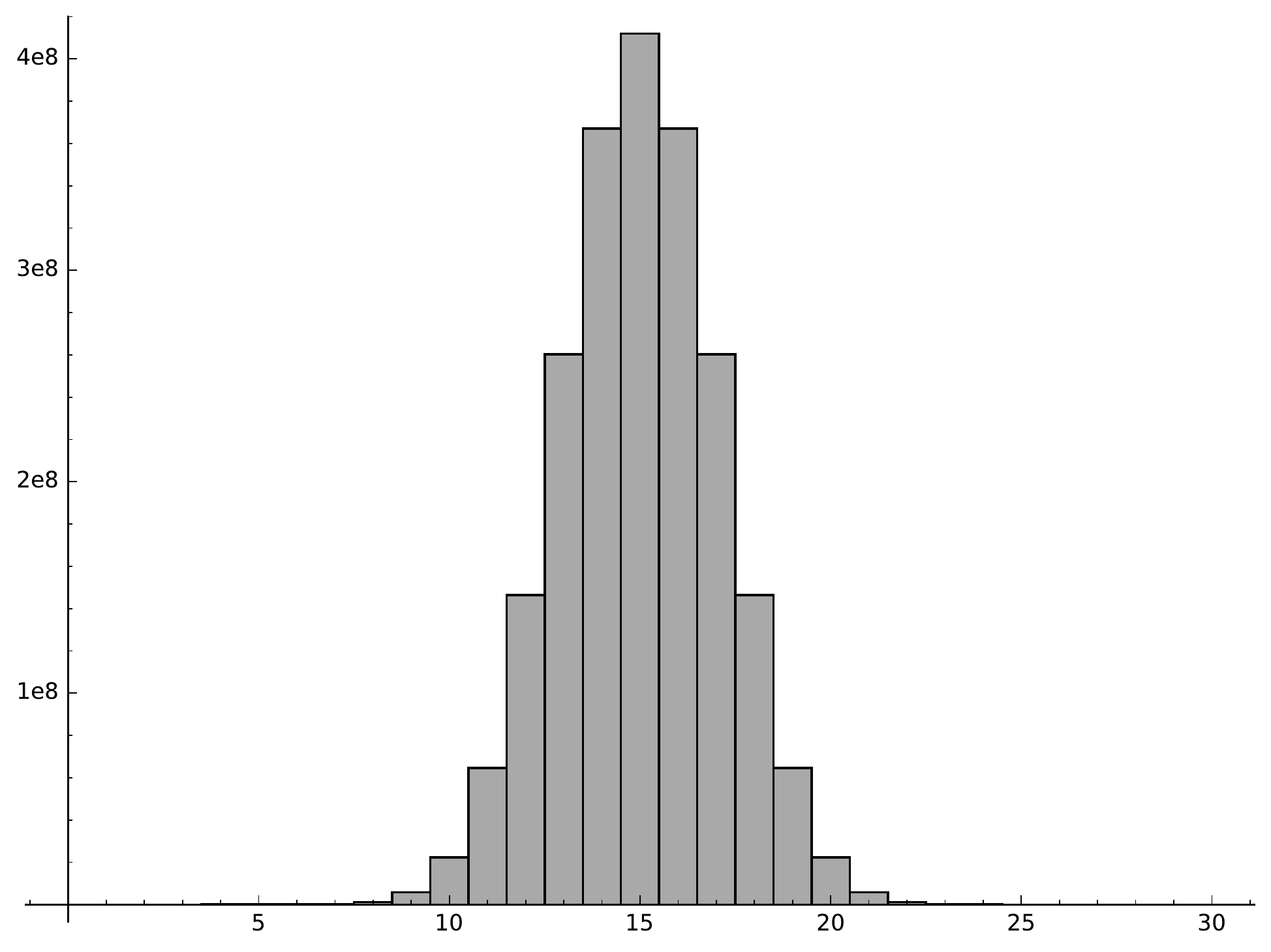}}
\subcaptionbox{$\widetilde{N}(g)$, $g \in \widetilde{\LL}_{30}$ \label{histLittlewoodB}}{\includegraphics[width=0.24\linewidth]{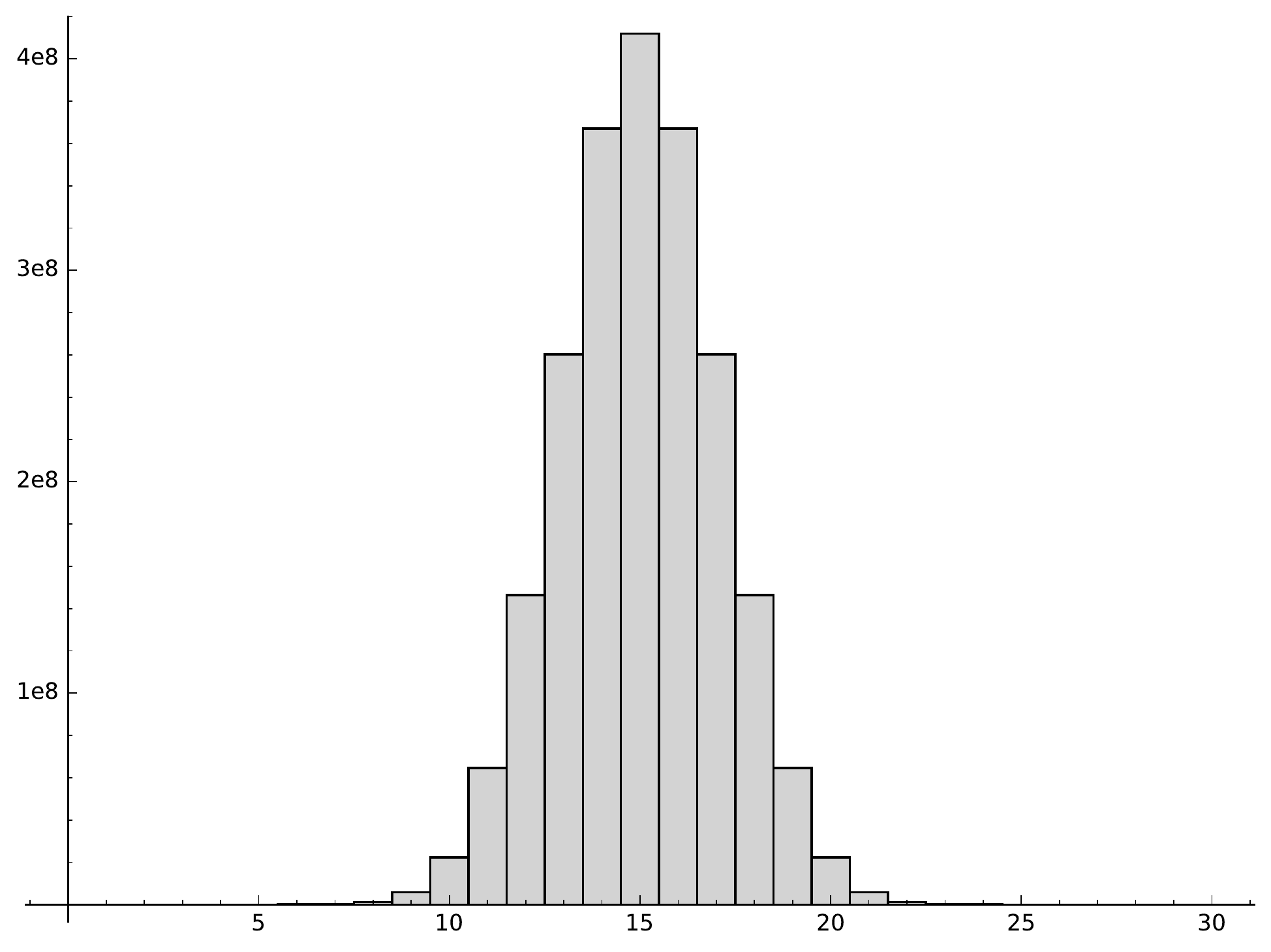}}
\subcaptionbox{$N(g)$, $g \in \LL_{31}$ \label{histLittlewoodC}}{\includegraphics[width=0.24\linewidth]{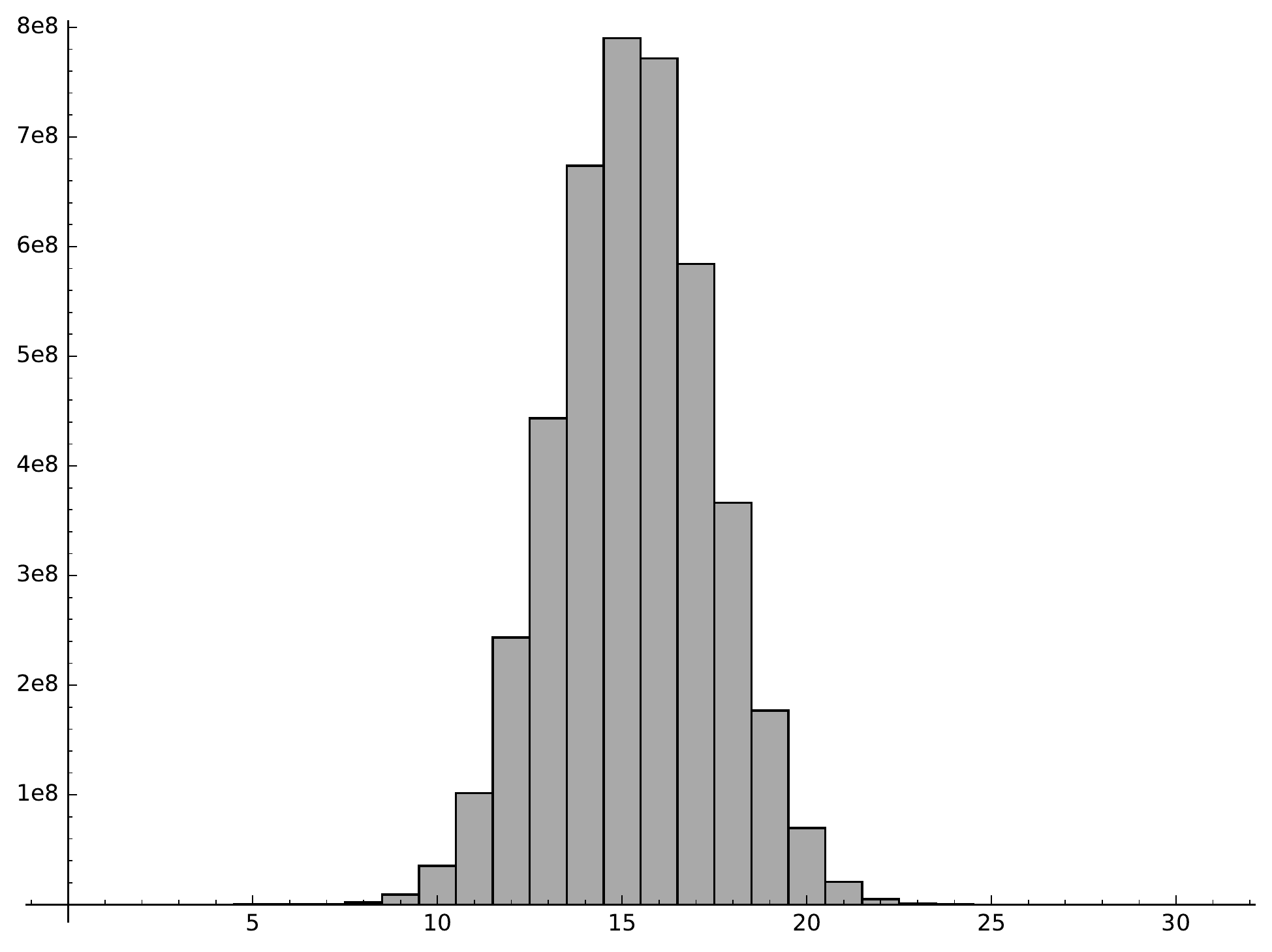}}
\subcaptionbox{$\widetilde{N}(g), g \in \widetilde{\LL}_{31}$ \label{histLittlewoodD}}{\includegraphics[width=0.24\linewidth]{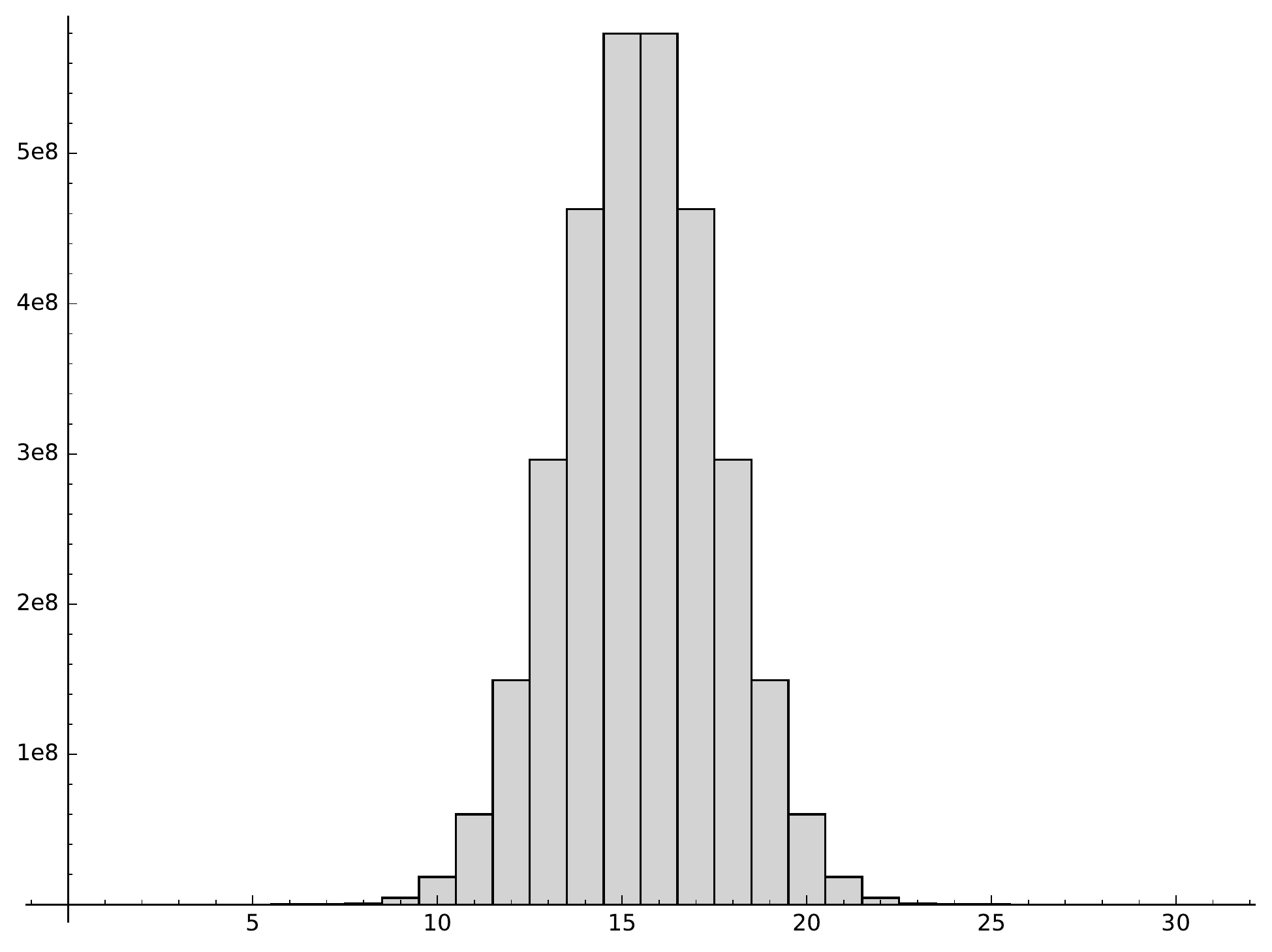}}
\caption{$N(g)$, $g \in \LL_n$ and $\widetilde{\LL}_n$, for $n=30, 31$}\label{histLittlewood}
\end{figure}
\begin{figure}
\centering
\subcaptionbox{$N(f)$, $f \in \NN_{35}$ \label{histZa}}{\includegraphics[width=0.45\linewidth]{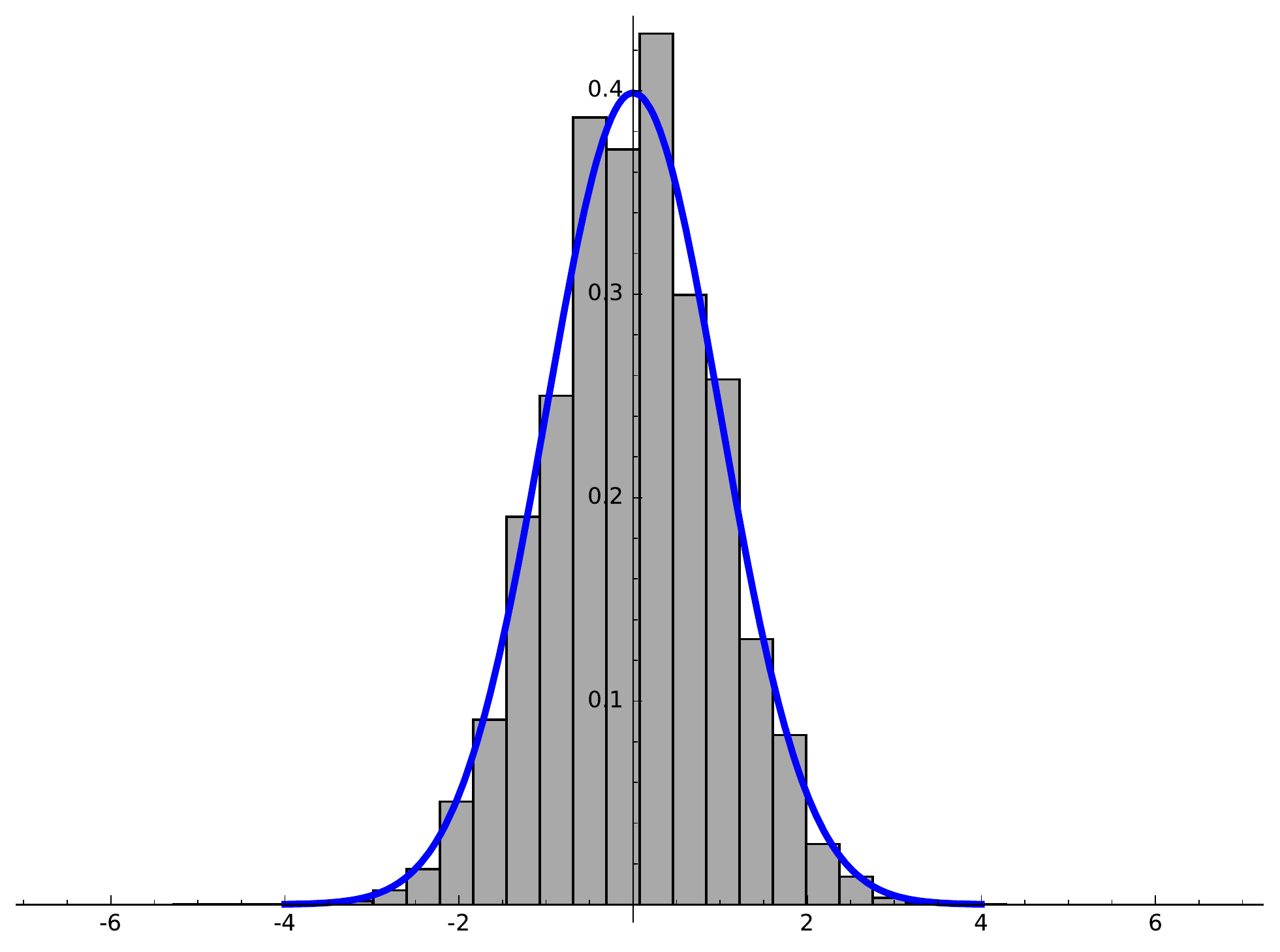}}
\subcaptionbox{$\widetilde{N}(f)$, $f \in \widetilde{\NN}_{35}$ \label{histZb}}{\includegraphics[width=0.45\linewidth]{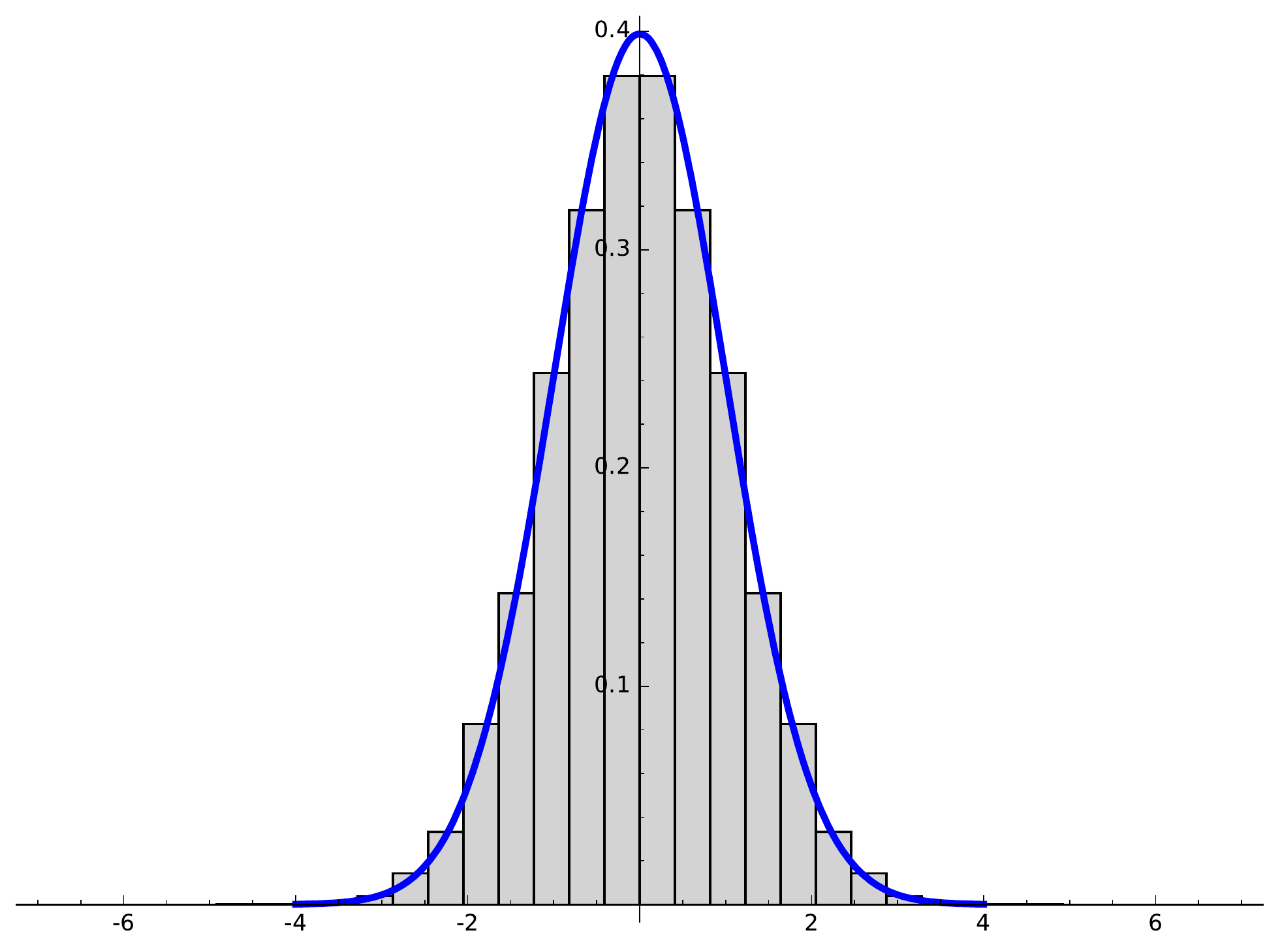}}
\vfill
\caption{Distribution of $z$-scores for $N(f)$, $f \in \NN_{35}$ and $f \in \widetilde{\NN}_{35}$, compared to density function of $Z(0, 1)$.}\label{histZapprox}
\end{figure}

We conclude this subsection with a natural conjecture illustrated by Figure \ref{histZapprox} in the odd degree Newman case.
\begin{conjecture}
For $f \in \NN_{2n+1}$ and for $g \in \LL_n$, 
\[
\frac{N(f)-EN(f)}{\sqrt{\variance{N(f)}}} \xrightarrow{\; \text{distr.} \; } Z(0, 1), \qquad \frac{\widetilde{N}(f)-E\widetilde{N}(f)}{\sqrt{\variance{\widetilde{N}(f)}}} \xrightarrow{\; \text{distr.} \; } Z(0, 1),
\]
\[
\frac{N(g)-EN(g)}{\sqrt{\variance{N(g)}}} \xrightarrow{\; \text{distr.} \; } Z(0, 1), \qquad \frac{\widetilde{N}(g)-E\widetilde{N}(g)}{\sqrt{\variance{\widetilde{N}(g)}}} \xrightarrow{\; \text{distr.} \; } Z(0, 1),
\]
as $n \to \infty$.
\end{conjecture}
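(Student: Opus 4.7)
The plan is to represent $N(f)$ (and analogously $N(g)$, $\widetilde{N}(f)$, $\widetilde{N}(g)$) as a sum of nearly independent local statistics and apply a Lindeberg--Feller CLT. For $f \in \widetilde{\NN}_n$ or $\widetilde{\LL}_n$ the argument principle gives
\[
N(f) \;=\; \frac{1}{2\pi}\int_0^{2\pi} \mathrm{d}_\theta \arg f(e^{i\theta}).
\]
Partitioning $\partial \DD$ into $M_n$ equal arcs $I_1,\ldots,I_{M_n}$ with $M_n\to\infty$, $M_n = o(\sqrt{n})$, and setting $N_k(f) := \tfrac{1}{2\pi}\int_{I_k} \mathrm{d}_\theta \arg f(e^{i\theta})$, one gets $N(f)=\sum_{k=1}^{M_n} N_k(f)$. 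In the unconditional setting, I would shrink the integration contour by $O(1/n)$ inside $\DD$, at a loss absorbed by the event $\{U(f)\geq 1\}$.

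First, I would invoke the classical equidistribution theorems of Hughes--Nikeghbali, Ibragimov--Zaporozhets and Shepp--Vanderbei for random polynomials with i.i.d.\ coefficients: the zeros concentrate in a tubular neighbourhood of $\partial\DD$ of width $O(n^{-1}\log n)$, with angles asymptotically uniform on $[0,2\pi)$. This yields $\mathbb{E}\, N_k(f) \sim n/M_n$ and heuristically $\variance{N_k(f)}=O(1)$, consistent with the empirical scaling $\variance{N(f)}=O(n)$ reported in Figure~\ref{meanvarN}. Approximate independence of the $N_k$ should follow from the observation that on a short arc $I_k$ the phase $\arg f(e^{i\theta})$ is essentially governed by a localised Fourier block of the coefficient vector $(a_j)$; these blocks become approximately orthogonal as $M_n\to\infty$, and Berry--Esseen / Hal\'asz / Nguyen--Vu type anticoncentration bounds for Bernoulli and Rademacher sums should yield joint asymptotic Gaussianity for every fixed finite collection $(N_{k_1},\ldots,N_{k_r})$. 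Lindeberg--Feller then delivers the conjectured CLT for $\sum_k N_k$, with asymptotic variance matching $\variance{N(f)}$ (or $\variance{\widetilde{N}(f)}$ in the conditional case).

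The main obstacle is turning the heuristic independence in the second step into a rigorous \emph{quantitative} statement. For continuous coefficient distributions the Kac--Rice and Edelman--Kostlan formalisms deliver CLTs essentially for free, but Bernoulli and Rademacher coefficients have no density, so one must substitute a discrete anticoncentration machinery; and while recent work of Do--Nguyen--Vu on universality for real roots is suggestive, no version seems to be on record that uniformly controls the counting statistic $N_k(f)$ across all arcs $I_k$ simultaneously. A secondary difficulty is the conditioning on $U(f)=0$: although $P(U(f)=0)\to 1$ in the asymmetric random models, the event $\{U(f)\geq 1\}$ need not be negligible on the conjectured scale $\sqrt{\variance{N(f)}}\sim \sqrt{n}$, so one has to quantify carefully how conditioning perturbs the law of $N(f)$ in order to transfer the CLT for $N$ to one for $\widetilde{N}$. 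Finally, the restriction to odd degrees $2n+1$ in the Newman case reflects the parity artefacts visible in the histograms \ref{histNewmanA}--\ref{histNewmanF}; a separate argument, probably via an independent analysis of $f(-1)\bmod 2$ conditioned on $N(f)$, would be required for even $n$.
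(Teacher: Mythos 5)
This statement is presented in the paper as an open \emph{conjecture}; the authors offer no proof of it, only the empirical evidence of Figures \ref{meanvarN} and \ref{histZapprox}. So there is no paper argument to compare against, and your text cannot be judged as a competing proof: it is a research programme, and to your credit you explicitly label its two central steps as heuristic.

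As a programme it is plausible but incomplete, and you have put your finger on the right obstruction. The argument-principle decomposition $N(f)=\sum_k N_k(f)$ is the natural one, and equidistribution of zeros near $\partial\DD$ does give the first moment $\mathbb{E}\,N_k(f)\sim n/M_n$. However, the step from there to a Lindeberg--Feller CLT requires \emph{quantitative} mixing between the $N_k$'s, and this is exactly what is unavailable for Bernoulli and Rademacher coefficients: the value $\arg f(e^{i\theta})$ at a single point depends on \emph{all} coefficients, not on a localised Fourier block, so the asserted ``approximate orthogonality'' of blocks is not a decomposition of the coefficient vector but only a heuristic about which Fourier modes dominate the local phase variation --- it needs to be made into a coupling or cumulant estimate, and no such estimate is on record. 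Two further points deserve sharpening. First, for the conditional statistic $\widetilde{N}$, the comparison of $\variance{N}$ and $\variance{\widetilde{N}}$ in Figure \ref{meanvarN} shows the two variances have visibly different slopes in $n$, so conditioning on $U=0$ changes the law of $N$ at leading order in the variance; a blanket ``$P(U\geq 1)$ is small'' argument will not transfer the CLT, and one would instead need a CLT for $N$ jointly with $U$. Second, the paper restricts the Newman conjecture to odd degree $2n+1$ precisely because for even degree the parity of $N(f)$ is tied to the sign of $f(-1)$, producing the bimodal histograms \ref{histNewmanA}, \ref{histNewmanC}, \ref{histNewmanD}, \ref{histNewmanF}; your remark that one should condition on $f(-1)\bmod 2$ is in the right direction, but note this is not a secondary issue --- the unconditioned even-degree CLT as you would state it is simply false, which is why the authors excluded it.
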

\section{Other results}

We take a look at operations in $\NN$ and $\LL$ that are useful in building admissible pairs from smaller ones. On their own, these two methods are not powerful enough to produce the full sets of Newman or  Littlewood admissible pairs, however they are of interest for future applications.

\subsection{Products of spaced Newman polynomials}

The product construction was inspired by the method of Carroll, Eustice and Fiegel \cite{CEF} (also exploited in \cite{B6, Mer3}; see also Ch.15, pp. 126--127 of \cite{Bbook}). 

\begin{theorem}\label{MultThm} Suppose that Newman--admissible pairs $(a, c)$ and $(b, d) \in \N^2$, such that $ad - bc = 1$, are realized by $f(z), g(z) \in \NN$ with
\[
N(f)=a, \qquad N(g)=b, \qquad \deg{f}=c, \qquad  \deg{g}=d.
\]
Then, for every rational fraction $k/n$ that is contained in one of the intervals
\[
 \left[\frac{b}{d}, \frac{a+bc}{c+cd}\right], \left[ \frac{b+ad}{d+cd}, \frac{a}{c}\right] ,
\left[1-\frac{a}{c}, 1-\frac{b+ad}{d+cd}\right], \left[ 1-\frac{a+bc}{c+cd}, 1-\frac{b}{d}\right], 
 \]
the pair $(k,n) \in \N^2$ is Newman--admissible: it can be realized by $h(z) := f(z^l)g(z^m) \in \NN$ for some $l, m \in \N$, or its reciprocal $h^*(z)$.
\end{theorem}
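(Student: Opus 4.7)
The construction is $h(z) := f(z^l)g(z^m)$ for specific positive integers $l, m$; the reciprocal polynomial $h^*(z) \in \NN_n$ then accounts for the two mirrored intervals, because $U(h) = 0$ will force $N(h^*) = n - N(h)$. The hypothesis $ad-bc=1$ is tailor--made to make the degree and zero--count of $h$ land exactly on the target pair $(k,n)$.

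Given $k/n$ in, say, the interval $[b/d, (a+bc)/(c+cd)]$, I set
\[
l = kd - nb, \qquad m = na - kc,
\]
and verify by direct arithmetic (using $ad-bc=1$) that $la+mb = k$ and $lc+md = n$. The interval conditions force $l \geq 0$ and $m \geq lc \geq 0$; the trivial endpoint $k/n = b/d$ corresponds to $l = 0$ and is realized by $g(z)$ itself. For the remaining $k/n$, $l, m \geq 1$, so $h(z) := f(z^l)g(z^m)$ has degree $lc+md = n$ and constant term $f(0)g(0)=1$. Each zero $\alpha$ of $f$ lifts to $l$ zeros of $f(z^l)$ of modulus $\abs{\alpha}^{1/l}$, which preserves membership in $\DD$, $\partial\DD$, and $\C \setminus \overline{\DD}$. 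Since $U(f)=0$ and $N(f)=a$, we get $U(f(z^l))=0$ and $N(f(z^l))=la$, and symmetrically for $g(z^m)$. Consequently $U(h)=0$ and $N(h)=la+mb=k$.

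The crux is then verifying that $h \in \NN$, i.e.\ that the coefficients of $f(z^l)g(z^m)$ remain in $\{0,1\}$, or equivalently that the exponent sums $li + mj$ with $(i,j) \in \mathrm{supp}(f) \times \mathrm{supp}(g)$ are pairwise distinct. For fixed $j$ these exponents lie in a window $[mj, mj+lc]$ of length $lc$, and within the window they are distinct because $l \geq 1$. Consecutive windows are disjoint whenever $m > lc$, which rearranges to $k/n < (a+bc)/(c+cd)$; this settles admissibility on the interior of the first interval. The symmetric condition $l > md \Leftrightarrow k/n > (b+ad)/(d+cd)$ treats the second interval. At the interior boundary points $m = lc$ and $l = md$ two consecutive windows abut at a single exponent, and a collision can arise between the pair $(i,j)=(c,j_0)$ and $(0,j_0+1)$, which doubles a coefficient precisely when $g$ (resp.\ $f$) has two adjacent nonzero coefficients; the closed--interval endpoints therefore need a small additional inspection of $f$ or $g$. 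This spacing analysis is the main obstacle of the proof.

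Finally, for $k/n$ in the reflected intervals $[1-a/c, 1-(b+ad)/(d+cd)]$ and $[1-(a+bc)/(c+cd), 1-b/d]$, I carry out the construction above for $1 - k/n$ and pass to $h^*(z) \in \NN_n$. Because $U(h^*) = U(h) = 0$, the identity $N(h) + N(h^*) + U(h) = n$ immediately yields $N(h^*) = n - k$, as required. Altogether the proof reduces to a single combinatorial fact---that the sumset $l\cdot\mathrm{supp}(f) + m\cdot\mathrm{supp}(g)$ contains no repetitions---which is clean on the open interior of each interval and requires case analysis of the adjacency patterns of $f$ and $g$ at the boundary.
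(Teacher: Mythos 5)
Your proof takes essentially the same route as the paper's: you build $h(z)=f(z^l)g(z^m)$, invert the unimodular matrix $\begin{pmatrix}a&b\\c&d\end{pmatrix}$ to solve for $(l,m)$ from $(k,n)$, observe that the terms of the product cannot collide when the inequalities $m>cl$ or $l>dm$ hold, translate those inequalities into the given intervals for $k/n$, and pass to $h^*$ for the reflected intervals. The paper's own proof states exactly this, with the non--collision step asserted more briefly than your window analysis.

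One thing you correctly noticed and the paper glosses over: the non--collision conditions derived in the proof are the \emph{strict} inequalities $m>cl$, $l>dm$ (and $l,m\ge1$), so the argument as given covers only the open interiors of the four intervals, while the theorem is stated with closed brackets. At an endpoint such as $l=0$ the ``product'' degenerates (and is handled by $g(z^m)$ alone), and at $m=cl$ or $l=dm$ abutting windows can genuinely produce a coefficient $2$ whenever $f$ or $g$ has two adjacent nonzero coefficients, exactly as you say. Your flagging of this boundary case is a legitimate sharpening of what the paper leaves implicit; it does not affect the open intervals, which is all the theorem is later used for.
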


\begin{proof}[Proof of Theorem \ref{MultThm}] For arbitrary $l$, $m \in \N$, let $h(z) := f(z^l)g(z^m)$. If $l > dm=\deg{g(z^m)}$ or $m > cl=\deg{f(z^l)}$, then $h(z) \in \NN$ as the terms in $h(z)$ do not collide. Also, $U(h)=0$ because $U(f)=U(g)=0$. Let $k=N(h)$, $n=\deg{h}$. Clearly, one has
\[
 \begin{cases}
 k = al + bm\\
 n =cl + dm
 \end{cases}, \text{ or, in matrix form, }
 \begin{pmatrix}
 k\\
 n
 \end{pmatrix} = \begin{pmatrix}
 a & b\\
 c & d
 \end{pmatrix} \cdot \begin{pmatrix}
 l\\
m
 \end{pmatrix}. 
 \]
 Since $ad - bc = 1$, the linear transformation is invertible over $\Z$:
 \begin{equation}\label{eq_lm}
 \begin{pmatrix}
 l\\
 m
 \end{pmatrix} = \begin{pmatrix}
 d & -b\\
 -c & a
 \end{pmatrix} \cdot \begin{pmatrix}
 k\\
n
 \end{pmatrix}. 
 \end{equation}
We want to find values $(k, n)$ that yield $(l, m) \in \mathcal{T}_1 \cup \mathcal{T}_2$ in \eqref{eq_lm}, where:
 \[
 \mathcal{T}_1 := \left\{(l, m): \in \Z^2: m > 0, l > dm
 \right\},
 \mathcal{T}_2 := \left\{(l, m) \in \Z^2: l > 0, m > cl
 \right\}.
 \] After solving for $(k, n)$ that correspond to parameters $(l, m)$, one finds
 \[
 \mathrm{Image}(\mathcal{T}_1) = \left\{ (k, n) \in \N^2: \frac{b+ad}{d+cd} < \frac{k}{n} < \frac{a}{c}\right\},
 \]
 \[
 \mathrm{Image}(\mathcal{T}_2) = \left\{ (k, n) \in \N^2: \frac{b}{d} < \frac{k}{n} < \frac{a+bc}{c+cd}\right\}.
 \]
 This is how the first two intervals in Theorem \ref{MultThm} appear. The last two intervals are obtained by reflection $(k, n) \mapsto (n-k, n)$ from the first two: they correspond to pairs realized by $h^*(z)$.
 \end{proof}
 \emph{Note:} It is possible that the pair $(k, n)$ can still be realized by the product polynomial $h(z) \in \NN$ even if $k/n$ lies outside of intervals described in Theorem \ref{MultThm}: this is because there might be still no collision of terms in the product even if $l \leq dm$ or $m \leq cl$. As it depends on the coefficient patterns of $f(z)$ and $g(z)$, such cases do not admit a simple description.
 
 \begin{example} Consider the pair $f(z) = 1+z^2+z^3$ and $g(z) = 1+z^3+z^5$. The determinant
 \[
\det{\begin{pmatrix}
N(f) & N(g)\\
 \deg{f} & \deg{g}
 \end{pmatrix}} = \det{\begin{pmatrix}
 2 & 3\\
 3 & 5
 \end{pmatrix}} = 1.
 \] Theorem \ref{MultThm} tells us that the pairs $(k,n) \in \N^2$ with
 \[
 k/n \in (1/3, 7/20) \cup (7/18, 2/5) \cup (3/5, 11/18) \cup (13/20, 2/3)
 \] certainly can be realized by $f(z^l)g(z^m)$, where $l, m \in \N$, or their reciprocals.
\end{example}

\subsection{Rotated Littlewood polynomials}

Rotated polynomials were considered in the constructions of sequences with large Merit factors and polynomials of small $L^4$--norm, see papers \cite{BoChoJe, Gola, JeKaSch, HohJen}. 

For $f(z) \in \C[z]$ of degree $n$, written as in equation \eqref{genForm} with $a_j \ne 0$, $0 \leq j \leq n$, the polynomial
\begin{equation}\label{defRot}
g(z) = a_n + a_0z + a_1 z^2+ \cdots a_{n-1}z^n
\end{equation}
produced by the cyclic shift of the coefficients of $f(z)$ is \emph{a forward rotation} of $f(z)$, which we denote by $g := \Rot{f}{}$. 
The polynomial $f(z)$ then is \emph{a backward rotation} of $g(z)$, which we denote as $f = \Rot{g}{-1}$. 
As $a_j \ne 0$, after $n+1$ rotations the original polynomial is restored. 
\begin{lemma}\label{RotLem}
Let $f(z)$ and $g(z) = \Rot{f}{}$ be as in equation \eqref{genForm} and \eqref{defRot}. Then
\begin{equation}\label{eqRotDif}
zf(z) - g(z) = a_n(z^{n+1}-1).
\end{equation}
If $|f(z)| + |g(z)| > |a_n||z^{n+1}-1|$ holds on $\partial \DD$, then $N(g) = N(f) + 1$.
\end{lemma}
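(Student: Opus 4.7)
The proof splits naturally into a short algebraic step and an application of Rouché's theorem in its symmetric form.

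\textbf{Step 1: the identity.} Expanding both sides by comparing coefficients, we have
\[
zf(z) = a_0 z + a_1 z^2 + \dots + a_{n-1} z^n + a_n z^{n+1},
\]
while $g(z) = a_n + a_0 z + a_1 z^2 + \dots + a_{n-1} z^n$. Subtracting term-by-term, every intermediate coefficient cancels and we are left with $zf(z) - g(z) = a_n z^{n+1} - a_n = a_n(z^{n+1}-1)$, which establishes \eqref{eqRotDif}. This is a routine verification and needs no more than that.

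\textbf{Step 2: apply the symmetric Rouché theorem to $zf(z)$ and $g(z)$ on $\partial\DD$.} Recall the symmetric version: if $\phi, \psi$ are holomorphic on a neighborhood of $\overline{\DD}$ and $|\phi(z) - \psi(z)| < |\phi(z)| + |\psi(z)|$ on $\partial\DD$, then $\phi$ and $\psi$ have the same number of zeros inside $\DD$ (with multiplicity), and moreover neither vanishes on $\partial\DD$ (since a zero of $\phi$ or $\psi$ on the circle would force equality in the hypothesis). By the identity from Step 1,
\[
|zf(z) - g(z)| = |a_n|\,|z^{n+1}-1| \quad \text{for all } z \in \C.
\]
On $\partial\DD$, $|zf(z)| = |f(z)|$, so the lemma's hypothesis $|f(z)| + |g(z)| > |a_n|\,|z^{n+1}-1|$ is exactly $|zf(z)| + |g(z)| > |zf(z) - g(z)|$ on $\partial\DD$. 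Thus Rouché applies.

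\textbf{Step 3: read off the zero count.} By Rouché, $zf(z)$ and $g(z)$ have the same number of zeros in $\DD$. The function $zf(z)$ contributes one zero at $z=0 \in \DD$ coming from the factor $z$, plus the $N(f)$ zeros of $f$ already inside $\DD$; in total $N(f) + 1$. Hence $N(g) = N(f) + 1$, as claimed.

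\textbf{Obstacles.} There is essentially no serious obstacle: the identity is a one-line coefficient comparison, and the inequality in the hypothesis has been engineered to be exactly the input required by the symmetric Rouché theorem. The only point that deserves a line of care is noting that the strict inequality automatically rules out zeros of $f$ or $g$ on $\partial\DD$, so the counts $N(f)$ and $N(g)$ are unambiguous and no boundary zeros need to be handled separately.
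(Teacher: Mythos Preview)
Your proof is correct and follows exactly the same approach as the paper: verify the identity by subtracting the expansions, then apply the symmetric (Estermann) form of Rouch\'e's theorem to $zf(z)$ and $g(z)$ on $\partial\DD$, and finally use $N(zf)=N(f)+1$. The paper's proof is a two-line sketch of precisely these steps; your version simply fills in the details.
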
 
\begin{proof}
Multiply Eq. \eqref{genForm} by $z$ and subtract Eq. \eqref{defRot} to obtain Eq. \eqref{eqRotDif}. For the statement about the number of roots, apply the symmetric version of Rouch\'es theorem \cite{Este} to $g(z)$, $zf(z)$, $a_n(z^n-1)$ on $\partial \DD$ and use $N(zf)=1+N(f)$.
\end{proof}
If $f(z) \in \LL_n$, then $\Rot{f}{} \in \LL_n$, and vice versa. In particular, as long as $|f(z)| > |z^{n+1}-1|$ on $\partial\DD$, we can increase or decrease $N(f)$ by rotating it.
\begin{corollary}\label{rotL}
Let $f(z) \in \LL_n$ satisfy $N(f) = k_0$, $\min_{|z|=1}{\abs{f(z)}} = \mu$. Then the pairs $(k, n)$ with $k: \abs{k-k_0} < \lceil \mu/2\rceil -1$ are Littlewood admissible and can be realized by rotations of $f(z)$.
\end{corollary}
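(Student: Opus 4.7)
The plan is to iterate Lemma~\ref{RotLem} in both directions. Set $f_0 := f$ and, for $j \in \Z$, let $f_j := \Rot{f}{j}$ (with negative $j$ meaning iterated backward rotations). Since a rotation is simply a cyclic permutation of the coefficient vector $(a_0,\dots,a_n) \in \{-1,1\}^{n+1}$, every $f_j$ lies in $\LL_n$ and has leading coefficient of modulus $1$. Applied to the pair $(f_j, f_{j+1})$, Eq.~\eqref{eqRotDif} takes the form $z f_j(z) - f_{j+1}(z) = \epsilon_j(z^{n+1}-1)$ with $\epsilon_j \in \{-1, 1\}$, so the discrepancy between $z f_j$ and $f_{j+1}$ is bounded by $2$ uniformly on $\partial\DD$.

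The main quantitative step controls how the minimum modulus shrinks per rotation. Writing $\mu_j := \min_{|z|=1}|f_j(z)|$ (so $\mu_0 = \mu$), the identity above gives, at each $z \in \partial\DD$,
\[
|f_{j+1}(z)| \geq |f_j(z)| - |z^{n+1} - 1| \geq |f_j(z)| - 2,
\]
so $\mu_{j+1} \geq \mu_j - 2$ and, by induction, $\mu_j \geq \mu - 2|j|$. Next I verify the hypothesis of Lemma~\ref{RotLem} for the pair $(f_j, f_{j+1})$: the pointwise bound above yields
\[
|f_j(z)| + |f_{j+1}(z)| \geq 2|f_j(z)| - |z^{n+1} - 1|,
\]
so the required inequality $|f_j(z)| + |f_{j+1}(z)| > |z^{n+1}-1|$ on $\partial\DD$ follows once $|f_j(z)| > |z^{n+1}-1|$ pointwise, and a clean uniform sufficient condition is $\mu_j > 2$. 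When this holds, Lemma~\ref{RotLem} gives $N(f_{j+1}) = N(f_j)+1$; applying the same lemma with the roles swapped produces the decrement $N(f_{j-1}) = N(f_j)-1$ for a backward rotation, under the same hypothesis.

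Finally one counts the rotations. To chain together $N$ successful rotations in one direction, we need $\mu_j > 2$ for $j = 0, \dots, N-1$; the lower bound $\mu_j \geq \mu - 2j$ makes this automatic as long as $\mu - 2(N-1) > 2$, i.e.\ $N < \mu/2$, which yields up to $\lceil \mu/2 \rceil - 1$ rotations in each direction. Hence for every $k$ in the stated range, $f_{k-k_0} \in \LL_n$ satisfies $N(f_{k-k_0}) = k$, and $U(f_{k-k_0}) = 0$ is free of charge since $\mu_{k-k_0} > 0$ rules out zeros on $\partial\DD$; this realizes the Littlewood--admissible pair $(k, n)$.

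The argument is essentially a straightforward induction, so there is no deep obstacle. The only delicate point is that the Rouch\'e hypothesis in Lemma~\ref{RotLem} must be verified pointwise on $\partial\DD$, because $|z^{n+1}-1|$ varies with $z$; this is exactly what forces the loss of $2$ in the minimum at each rotation and thereby bounds the attainable range of $k - k_0$.
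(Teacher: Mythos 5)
Your proof is correct and follows essentially the same approach as the paper: iterate Lemma~\ref{RotLem}, using the bound $\mu_{j+1} \geq \mu_j - 2$ (from Eq.~\eqref{eqRotDif}) to keep the minimum modulus above the threshold $2$ that guarantees the Rouch\'e hypothesis at each step. Your careful accounting actually yields the slightly stronger conclusion $\abs{k-k_0} \leq \lceil \mu/2 \rceil - 1$ rather than the strict inequality in the statement, which is consistent with the paper's own phrase ``at least $\lceil \mu/2\rceil -1$ rotations.''
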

\begin{proof}
By equation \eqref{eqRotDif}, $\abs{g(z)} \geq \abs{f(z)} - 2$ and $\abs{f(z)} \geq \abs{g(z)} - 2$. Thus the minima of rotated polynomial is at most $\mu-2$. This makes possible to carry out at least $\lceil \mu/2\rceil -1$ rotations forward or backward while increasing or decreasing $N(f)$ by one through each rotation.
\end{proof}

Lemma \ref{RotLem} yields a somewhat unexpected corollary about the values of rotated polynomials.

\begin{corollary}\label{rotLow}
Assume that in equation \eqref{genForm} the coefficients $a_j$ of $f(z) \in \C[z]$ with $N(f) = k$, $\deg{f}=n$  are all non-zero. Then there exist $l, m \in \Z$, $0 \leq l \leq k$ and $0 \leq m \leq n-k $ and numbers $\zeta$, $\xi \in \partial \DD$,  such that
\[
\abs{\Rot{f}{-l-1}(\zeta)} + \abs{\Rot{f}{-l}(\zeta)} \leq \abs{a_{l}}\abs{\zeta^{n+1}-1},
\]
\[
\abs{\Rot{f}{m}(\xi)} + \abs{\Rot{f}{m+1}(\xi)} \leq \abs{a_{n-m}}\abs{\xi^{n+1}-1},
\]
\end{corollary}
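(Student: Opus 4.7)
The plan is to argue by contradiction, repeatedly applying the contrapositive of Lemma \ref{RotLem} along the orbit of $f(z)$ under forward and backward rotations. The key observation is that rotating cyclically changes the leading coefficient in a predictable way: if $\Rot{f}{m}$ is obtained by $m$ forward rotations, then its coefficient sequence is a cyclic shift of $(a_0,\dots,a_n)$, so its leading coefficient is $a_{n-m}$; symmetrically, the leading coefficient of $\Rot{f}{-l-1}$ is $a_{l}$. This is exactly what appears in the bounds of the corollary, which signals that each inequality is precisely the \emph{negation} of the hypothesis of Lemma \ref{RotLem} applied to the consecutive pair $(\Rot{f}{m},\Rot{f}{m+1})$ or $(\Rot{f}{-l-1},\Rot{f}{-l})$.

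First I would handle the forward direction. Suppose for contradiction that for every $m \in [0, n-k]$ and every $\xi \in \partial\DD$ we have
\[
\abs{\Rot{f}{m}(\xi)} + \abs{\Rot{f}{m+1}(\xi)} > \abs{a_{n-m}}\abs{\xi^{n+1}-1}.
\]
Applying Lemma \ref{RotLem} to the pair $F = \Rot{f}{m}$, $G = \Rot{f}{m+1}$, we obtain $N(\Rot{f}{m+1}) = N(\Rot{f}{m}) + 1$ for each such $m$. Telescoping from $m=0$ up to $m=n-k$ gives $N(\Rot{f}{n-k+1}) = k + (n-k+1) = n+1$, contradicting $\deg f = n$. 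This forces the second inequality to hold at some $m \in [0, n-k]$.

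The backward direction is symmetric but requires care in indexing. If all the first inequalities in the corollary failed for $l \in [0,k]$, then applying Lemma \ref{RotLem} to $F = \Rot{f}{-l-1}$ and $G = \Rot{F}{} = \Rot{f}{-l}$ (whose leading coefficient is $a_{l}$) yields $N(\Rot{f}{-l}) = N(\Rot{f}{-l-1}) + 1$, i.e.\ backward rotation drops $N$ by one. Telescoping from $l=0$ to $l=k$ gives $N(\Rot{f}{-k-1}) = k - (k+1) = -1$, again impossible.

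I expect no serious obstacle here — the assumption that all $a_j$ are nonzero is what guarantees $\Rot{f}{\pm 1}$ remains of degree $n$, so the cyclic orbit is well-defined and Lemma \ref{RotLem} applies uniformly at each step. The only bookkeeping subtlety is to confirm that the leading coefficient of $\Rot{f}{-l-1}$ is indeed $a_l$ (not $a_{l+1}$ or $a_{l-1}$), which is a direct check on the cyclic shift of $(a_0,\dots,a_n)$. Once that is verified, the proof is a clean pigeonhole argument on the integer-valued function $m \mapsto N(\Rot{f}{m})$ constrained to the interval $[0,n]$.
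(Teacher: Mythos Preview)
Your proposal is correct and follows essentially the same approach as the paper's proof: both argue that $N$ cannot be pushed above $n$ by forward rotations nor below $0$ by backward rotations, so the hypothesis of Lemma~\ref{RotLem} must fail at some step in each direction. The paper states this in a single sentence, while you spell out the telescoping contradiction and verify the leading-coefficient bookkeeping explicitly; the underlying idea is identical.
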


\begin{proof}
As degrees of rotated polynomials stay same, one cannot decrease or increase $N(f)$ indefinitely. Hence, for some non--negative integers $0 \leq l \leq k$ and $0 \leq m \leq n-k$, $\Rot{f}{-l-1}$ or $\Rot{f}{m}$ will not satisfy the inequality of Lemma \ref{RotLem}, which guaranties the existence of the numbers $\zeta$ and $\xi$ with the prescribed property.
\end{proof}

In particular, for every $f \in \LL_n$, some rotation of $f$ will always attain absolute values $\leq 1$ on the unit circle.

\begin{example}
Consider the Barker polynomial of degree $12$ from (taken from Ch.14 of \cite{Bbook}):
\[
f(z) = 1+z+z^2+z^3+z^4-z^5-z^6+z^7+z^8-z^9+z^{10}-z^{11}+z^{12}.
\]
Since $f(z)$ is skew-symmetric and non vanishing on $\partial\DD$, one has $N(f)=6$.
Rotation yields polynomials with $k=5, 6, 7, 8$ (blue entries in Table \ref{rotBarker}). The conditions of Lemma 5.3 are fulfilled in the strong form $\abs{f_j(z)}+\abs{f_{j+1}(z)} > 2$ on $\partial\DD$ for $j=-5, -4, -1, 0$, and it still works for $j=1$.

\begin{table}[h]
\def\arraystretch{1.2}
\small{
\begin{tabular}{@{}lrrrrrrrrrrrrr@{}}
\toprule
$j$ 				        & $-6$ & $\textcolor{blue}{-5}$ &$\textcolor{blue}{-4}$ & $\textcolor{blue}{-3}$ & $-2$ & $\textcolor{blue}{-1}$ & $\textcolor{blue}{0}$ & $\textcolor{blue}{1}$ & $\textcolor{blue}{2}$ & $3$ & $4$ & $5$ & $6$\\
\midrule
$N\left(f_j\right)$  & $6$   & $5$   &$6$   & $7$  & $6$   & \textcolor{blue}{$5$}   & \textcolor{blue}{$6$} & \textcolor{blue}{$7$} & \textcolor{blue}{$8$} & $6$ & $5$ & $8$ & $7$\\
$\min$                          & \tiny{$0.25$}   & \textcolor{magenta}{\tiny{$1.23$}}   &\textcolor{violet}{\tiny{$1.80$}}   & \textcolor{magenta}{\tiny{$1.04$}}  & \tiny{$0.52$}   & \textcolor{magenta}{\tiny{$1.28$}}   & \textcolor{violet}{\tiny{$3.01$}} & \textcolor{magenta}{\tiny{$1.88$}} & {\textcolor{red}{\tiny{$0.10$}}} & \tiny{$0.92$} & \tiny{$0.31$} & \tiny{$0.06$} & \tiny{$0.08$}\\
\bottomrule
\end{tabular}
}
\caption{Rotated Barker polynomials $f_j = \Rot{f}{j}$}\label{rotBarker}
\end{table} 
\end{example}

This example is related to the famous conjecture on \emph{flat} Littlewood polynomials  (more precisely, a one side skew-symmetric version of this conjecture), see Ch.15 of \cite{Bbook} and the references therein.

\begin{conjecture}\label{flatConj}
For every even $n \in \N$ there exists a skew-symmetric polynomial $f(z) \in \LL_n$ such that $|f(z)|>c_1n^{1/2}$ on the unit circle, where $c_1 > 0$ is some constant that is independent of $n$.
\end{conjecture}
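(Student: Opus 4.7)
The plan is to attack this via a probabilistic construction combined with a deterministic correction at the problematic points on $\partial\DD$; this two-stage strategy is modeled on the approach that resolved the analogous flatness conjecture for general (non--skew-symmetric) $f \in \LL_n$ by Balister, Bollob\'as, Morris, Sahasrabudhe and Tiba. The Parseval identity $\frac{1}{2\pi}\int_0^{2\pi}|f(e^{it})|^2\,dt = n+1$ pins the $L^2$--average of $|f|^2$ at $n+1$, so the upper bound $|f| \leq C\sqrt{n}$ is the easy half (achievable by Rudin--Shapiro type recursions, and one can arrange these to be skew--symmetric), while the matching lower bound $|f| \geq c_1 \sqrt{n}$ demanded by Conjecture \ref{flatConj} is the genuinely hard half.

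First I would parametrize skew--symmetric Littlewood polynomials of even degree $n$ by the coefficient relations forced by $f^*(z) = -f(-z)$, which cut the free parameters roughly in half but still leave an exponentially large family, and draw $f$ uniformly at random from this family. For a fixed $z = e^{it} \in \partial\DD$, $f(e^{it})$ becomes a sum of $\Theta(n)$ independent $\pm 1$ combinations of fixed unit vectors in $\C$, so by a quantitative central limit theorem $|f(e^{it})|/\sqrt{n}$ is approximately Rayleigh--distributed, giving $\Pr(|f(e^{it})| < c\sqrt{n}) = O(c^2)$ for each individual $t$.

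The next step is to promote this pointwise estimate to a uniform lower bound. Bernstein's inequality yields $\|f'\|_\infty \leq n\|f\|_\infty = O(n^{3/2})$, which makes a net of $\asymp n^A$ points on $\partial\DD$ sufficient to control $\min|f|$; but a naive union bound fails because the pointwise small--ball probability $O(c^2)$ does not decay with $n$. To overcome this, I would split $\partial\DD$ into a union of small neighborhoods of low--order roots of unity and their complement; on the complement one can apply Hal\'asz--type anti-concentration inequalities, sharpened to incorporate the structure of the coefficient vector, to sharpen the single--point tail to $o(n^{-A})$ and enable a union bound.

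The main obstacle will be the deterministic piece near low--order roots of unity, especially near $z=\pm 1$, where the skew--symmetry $f^*(z) = -f(-z)$ already forces algebraic identities such as $f(1) + f(-1) = 0$ and pins down partial sums in a way that interacts rigidly with the Rademacher structure on the remaining coefficients. Conventional greedy sign--flip corrections that work in the unconstrained Littlewood setting of Balister et al.\ may violate skew--symmetry, so one must design paired flips compatible with the constraint while still defeating the anti-concentration failure at resonant $z$. Reconciling random flatness away from these resonances with a compatible, structured deterministic construction at the resonances looks to be the crux; everything else --- Bernstein, net arguments, Hal\'asz inequalities --- is machinery, but this compatibility between the $\pm 1$ alphabet, the skew--symmetric constraint, and the deterministic corrections near $\{z^k = \pm 1\}_{k = O(1)}$ is where any proof must do substantial new work.
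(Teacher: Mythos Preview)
The statement you are attempting to prove is a \emph{Conjecture} in the paper, not a theorem; the paper offers no proof whatsoever. It is stated as an open problem (a one--sided, skew--symmetric version of the flat Littlewood polynomial conjecture), and the paper only uses it \emph{conditionally}: Proposition~\ref{stdevProp} is derived under the explicit assumption that Conjecture~\ref{flatConj} holds. The paper even remarks that while Balister, Bollob\'as, Morris, Sahasrabudhe and Tiba announced a resolution of the \emph{unconstrained} flat polynomial conjecture, ``it is not yet clear whether the polynomials they prove to exist are skew--symmetric or not.'' So there is nothing in the paper to compare your proposal against.

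As for the proposal itself: it is not a proof but a strategy outline, and you are candid about this. You correctly locate the crux --- the random construction with Hal\'asz--type anti-concentration handles generic points on $\partial\DD$, but the skew--symmetry constraint $f^*(z)=\pm f(-z)$ forces rigid algebraic relations (e.g.\ $f(1)+f(-1)=0$ in one parity) that obstruct the deterministic correction step near low--order roots of unity. Your observation that the paired sign--flips of the Balister et al.\ argument may be incompatible with the skew--symmetric coefficient constraint is exactly the reason this remains open in the skew--symmetric case. Until that compatibility issue is resolved with a concrete construction, what you have written is a well--informed research plan, not a proof; and since the paper itself makes no claim to prove the conjecture, there is no discrepancy to report.
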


Assuming this conjecture, one can apply Corollary \ref{rotL} with $k_0 = n/2$ and $\mu = c_1n^{1/2}$.

\begin{proposition}\label{stdevProp}
Assume that Conjecture \ref{flatConj} is true. Then there exists a constant $c_2 > 0$, such that every pair $(k, n) \in \N^2$, where $n$ is even and $\abs{n/2-k} < 0.5c_2n^{1/2}$ is realized by a rotation of a skew--symmetric one-side flat Littlewood polynomials.
\end{proposition}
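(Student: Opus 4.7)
The plan is to directly combine the hypothetical flat polynomial supplied by Conjecture \ref{flatConj} with the rotation mechanism of Corollary \ref{rotL}. Concretely, fix an even $n \in \N$ and let $f(z) \in \LL_n$ be the skew-symmetric Littlewood polynomial promised by the conjecture, so that $|f(z)| > c_1 n^{1/2}$ for every $z \in \partial\DD$. In particular $U(f)=0$, since $f$ cannot vanish on the unit circle.

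Next I would verify that such an $f$ has $N(f) = n/2$. Skew-symmetry $f^{*}(z) = \pm f(-z)$ pairs each zero $\al$ of $f$ with $-1/\overline{\al}$, which lies strictly outside $\DD$ whenever $\al \in \DD$ and vice versa. Combined with $U(f)=0$, the $n$ zeros split into $n/2$ pairs straddling $\partial\DD$, so exactly half of them lie inside $\DD$. Hence $k_0 := N(f) = n/2$ and the hypothesis of Corollary \ref{rotL} is satisfied with $\mu > c_1 n^{1/2}$.

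Corollary \ref{rotL} then yields that every pair $(k, n)$ with $|k - n/2| < \lceil \mu/2 \rceil - 1$ is Littlewood-admissible, realized by some rotation $\Rot{f}{j}$ of the flat skew-symmetric polynomial. Since $\mu > c_1 n^{1/2}$, this bound implies
\[
\lceil \mu/2 \rceil - 1 \; > \; \tfrac{c_1}{2} n^{1/2} - 1.
\]
Choose any constant $c_2$ with $0 < c_2 < c_1$; then for all sufficiently large $n$ one has $0.5\, c_2 n^{1/2} \leq \tfrac{c_1}{2} n^{1/2} - 1$, which gives the desired range $|n/2 - k| < 0.5 c_2 n^{1/2}$. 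The finitely many small $n$ can be absorbed by shrinking $c_2$ further (for small $n$ the admissibility condition is vacuous or is covered by direct inspection via Theorems \ref{Thm_k2_Littlewood}, \ref{Thm_k3to11_Littlewood}, or \ref{mainLittlewoodThm}).

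There is no genuine obstacle: the proof is a bookkeeping exercise once the conjectured flat polynomial is in hand. The only mildly delicate point is the integer gap between the bound $\lceil \mu/2 \rceil - 1$ provided by the corollary and the real-valued bound $0.5 c_2 n^{1/2}$ in the proposition, which forces one to take $c_2$ strictly smaller than $c_1$; everything else is immediate from the skew-symmetric zero-pairing and from Corollary \ref{rotL}.
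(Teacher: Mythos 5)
Your proof is correct and follows essentially the same route the paper takes: invoke Corollary \ref{rotL} with $k_0 = n/2$ (from the skew-symmetric pairing of zeros across $\partial\DD$, given $U(f)=0$) and $\mu > c_1 n^{1/2}$ (from Conjecture \ref{flatConj}), then shrink $c_1$ slightly to $c_2$ to absorb the $\lceil\cdot\rceil-1$ integer correction and the small-$n$ cases. The paper states this in a single sentence just before the proposition; you have merely filled in the routine details.
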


Very recently, a resolution of original Littlewood flat polynomial conjecture was announced by Balister, Bolob\'{a}s, Morris, Sahasrabudhe and Tiba \cite{BBMST}, although it is not yet clear whether the polynomials they prove to exist are skew-symmetric or not. 

In \cite{CEF} it was shown how to construct the sequence of skew-symmetric polynomials $f \in \LL_n$ such that $|f(z)|>n^{0.43}$ on $\partial \DD$. One takes $f_0(z)$ to be the aforementioned Barker polynomial of degree $12$, $n_0 := \deg{f_0}$ and defines
\begin{equation}\label{eqConstr}
f_{j+1}(z) := f_j(z^{n_j+1})f_j(z), \quad n_{j+1} := \deg{f_{j+1}} = (n_j+1)^2-1.
\end{equation}
Iteration equation \eqref{eqConstr} produces a sequence $\{f_j\}$ of Littlewood polynomials of degree $n_j = 13^{2^j}-1$. Applying Corollary \ref{rotL} to this sequence, one obtains the following unconditional result:

\begin{theorem}\label{almostflatTeor}
Pairs $(k, n) \in \N^2$, where $n=13^{2^j}-1$, $j \in \N_0$ and $\abs{k-n/2} \leq \lceil{n^{0.43}/2}\rceil - 1$ can be realized by rotations of Littlewood polynomial $f_j(z)$ in equation \eqref{eqConstr}.  
\end{theorem}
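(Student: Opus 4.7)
The plan is to apply Corollary \ref{rotL} directly to each polynomial $f_j$ in the Carroll--Eustice--Fiegel sequence defined by \eqref{eqConstr}, with $k_0 = n_j/2$ and $\mu = n_j^{0.43}$. To do this I need two ingredients: (a) the ``midpoint'' identity $N(f_j) = n_j/2$, and (b) the lower bound $\min_{|z|=1}|f_j(z)| > n_j^{0.43}$ for every $j \geq 0$. Once both are in hand, Corollary \ref{rotL} gives admissibility of every pair $(k, n_j)$ with $|k - n_j/2| < \lceil \mu/2 \rceil - 1$, and since $\mu > n_j^{0.43}$ and the ceiling is monotone, this set contains all $k$ with $|k - n_j/2| \leq \lceil n_j^{0.43}/2 \rceil - 1$, matching the theorem.

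For (a), I would first verify by induction on $j$ that every $f_j$ is skew-symmetric in the sense $f_j^*(z) = f_j(-z)$, and simultaneously that $n_j$ is even. The base case is the Barker polynomial $f_0$, which directly satisfies $f_0^*(z) = f_0(-z)$ (a quick coefficient check), and $n_0 = 12$. For the inductive step, using the functorial property $(pq)^* = p^*q^*$ and the identity $(f_j(z^{n_j+1}))^* = f_j^*(z^{n_j+1})$ valid for any real polynomial, I get
\[
f_{j+1}^*(z) = f_j^*(z^{n_j+1})\,f_j^*(z) = f_j(-z^{n_j+1})\,f_j(-z).
\]
Since $n_j$ is even by the induction hypothesis, $n_j + 1$ is odd, so $-z^{n_j+1} = (-z)^{n_j+1}$, and the right-hand side equals $f_j((-z)^{n_j+1})\,f_j(-z) = f_{j+1}(-z)$. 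The parity of $n_{j+1} = (n_j+1)^2 - 1$ is even, closing the induction. (Along the way, one also checks that the multiplication in \eqref{eqConstr} produces no coefficient collisions, so $f_{j+1} \in \LL_{n_{j+1}}$: the exponents $(n_j+1)i + k$ with $0 \leq i, k \leq n_j$ are all distinct because $k < n_j + 1$.) Combined with (b), which ensures $f_j$ has no unimodular zeros, skew-symmetry pairs each zero $\alpha \in \DD$ with a distinct zero $-\alpha^{-1}\not\in\overline{\DD}$, forcing $N(f_j) = n_j/2$.

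For (b), I invoke the Carroll--Eustice--Fiegel estimate stated in the discussion preceding the theorem, which asserts precisely that $|f_j(z)| > n_j^{0.43}$ on $\partial\DD$ for the iterated construction \eqref{eqConstr}. Combining (a), (b), and Corollary \ref{rotL} with $k_0 = n_j/2$ and $\mu = \min_{|z|=1}|f_j(z)|$ then yields the theorem, the Littlewood-admissibility being realized concretely by the rotations $\Rot{f_j}{s}$ for integer $s$ with $|s| \leq \lceil n_j^{0.43}/2 \rceil - 1$.

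The only genuinely hard ingredient is (b), and that is taken care of by citing \cite{CEF}; everything else is bookkeeping. The mild subtlety worth being careful about is the parity tracking in the induction (the proof that $n_j+1$ is odd is what lets $(-z)^{n_j+1}$ absorb the minus sign), and the gap between the strict inequality in the statement of Corollary \ref{rotL} and the non-strict inequality in the theorem, which is bridged by the strictness of the bound $\mu > n_j^{0.43}$ coming from \cite{CEF}.
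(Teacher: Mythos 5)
Your overall strategy matches the paper's (which merely says ``applying Corollary~\ref{rotL} to this sequence, one obtains the following unconditional result'' and leaves the verification to the reader), and you fill in the unwritten details in a sensible way: the skew-symmetry induction, the parity of $n_j$, the no-collision check showing $f_{j+1}\in\LL_{n_{j+1}}$, and the deduction that a skew-symmetric Littlewood polynomial with no unimodular zeros has exactly half its zeros in $\DD$. All of that is correct, and the Barker base case $f_0^*(z)=f_0(-z)$ does check out.

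The one genuine gap is in your attempted bridge of the strict/non-strict mismatch. You claim the strictness $\mu > n_j^{0.43}$ upgrades the corollary's $|k-k_0|<\lceil\mu/2\rceil-1$ to the theorem's $|k-n_j/2|\le\lceil n_j^{0.43}/2\rceil-1$. That does not follow: $\mu>n^{0.43}$ only gives $\lceil\mu/2\rceil\ge\lceil n^{0.43}/2\rceil$, and these ceilings can coincide. Indeed they do coincide already in the base case: for $n_0=12$ one has $n_0^{0.43}\approx 2.91$ and (from Table~\ref{rotBarker}) $\mu\approx 3.01$, so $\lceil\mu/2\rceil=\lceil n_0^{0.43}/2\rceil=2$, and the corollary \emph{as literally stated} would only cover $|k-6|<1$, i.e.\ $k=6$ alone, whereas the theorem needs $k\in\{5,6,7\}$. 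The correct resolution is not the strictness of the CEF bound but an off-by-one in Corollary~\ref{rotL}'s statement: its own proof shows that one can carry out \emph{at least} $\lceil\mu/2\rceil-1$ rotations in either direction, each shifting $N$ by one, so the set of realizable values is $|k-k_0|\le\lceil\mu/2\rceil-1$ (non-strict). With the corollary read this way, monotonicity of the ceiling together with $\mu\ge n_j^{0.43}$ immediately gives $|k-n_j/2|\le\lceil n_j^{0.43}/2\rceil-1$, which is the theorem. So your proof is repaired by replacing the ``strictness bridges the gap'' sentence with an appeal to the non-strict version of the corollary that its proof actually delivers.
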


\section{Proofs of main Results}

\begin{proof}[Proof of Theorem \ref{mainNewmanThm}]
The smallest degree Newman polynomial $f(z)$ with the property $|f(z)| > 2$ on $\partial\DD$ of degree $38$ is given in Equation \eqref{superNewman}.  It has $N(f)=18$ zeros inside $\DD$. We choose this special polynomial $f(z)$ in Proposition \ref{Add2Prop} to obtain the polynomial  $h(z)$ with $N(h) = 18+r$ zeros in $\DD$ and $U(h) = 0$ zeros on $\partial \DD$. Note that for $r \geq 1$ and $n \geq 39+r$, $h(z) \in \NN_n$. By selecting $r \in \{1, 2, \dots, n-39\}$, one sees that $N(h)$ covers all $k$ in the interval $[19, n-21]$, for $n \geq 40$. By taking the reciprocal polynomials, one sees immediately that $N(h^*)$ covers all $k \in [21, n-19]$. The union of the two sets of integers is
\[
\left([19, n-21] \cup [21, n-19]\right) \cap \N =
\begin{cases}
	\{19, 21\} & \text{ for } n = 40,\\
	[19, n-19] \cap \N & \text{ for } n \geq 41.
\end{cases}					
\]
For $n=40$, the `gap' at $k=20$ can be covered by taking $f(z^{10})$, where $f(z)=1+z+z^4 \in \NN_4$ satisfies $N(f)=2$ and $U(f) = 0$. Therefore, all values $(k, n)$ for $k \in [19, n-19]$, $n \geq 40$ are Newman--admissible.

To prove that pairs $(k, n)$ with $ 3 \leq k \leq 18$ are Newman admissible for all sufficiently large $n$, we use Newman polynomials whose coefficients form a certain \emph{pattern} of digits $\{0, 1\}$: such patterns always start with \emph{a prefix} that is followed by the long repetition of a short sub--string called \emph{period} and end with \emph{a suffix}.

\emph{Cases $6 \leq k \leq 18$:} In Table \ref{largeNewman}, one finds a list of small degree polynomials $f_k(z) \in \NN$ that have $N(f_k)=k$ zeros inside $\DD$ and are always $>1$ in absolute value on the unit circle $\partial \DD$. Let $g_k(z):=f_k(z)+z^n$, where $n \geq \deg{f_k}+1$. By Proposition \ref{Add1Prop}, $N(g_k)=k$. Polynomials $f_k(z)$  themselves  cover cases where $n=\deg{f_k}$. Hence, pairs $(k, n)$, $6 \leq k \leq 18$ with $n \geq \deg{f_k}$ are Newman admissible. Intermediate lower bounds on degrees $n$ that are covered by this construction are listed in Table \ref{leastDegree}. Coefficients of Newman polynomials $g(z)$ constructed in this way have long repetitions of `$0$' and end with one-digit suffix $1$.

Next, we treat remaining values of $k$, in decreasing order. We use more complicated patterns with  longer periods from Table \ref{patternsNewman345}. The procedure to prove values $N(h)=k$, $U(h)=0$ for these $h(z)$ is outlined in Section \ref{sec_autoproof}.

\emph{Case $k=5$:} Consider odd degrees $n$ first. Take $f(z) = 1 + z^3+ z^7 +z^8+z^9$.
By direct computation, $N(f)=5$, $f(-1)=-1$ and $|f(z)| > 1$ for  $z \in \partial \DD \setminus\{-1\}$. By taking $f(z)$ itself, one sees that the pair $(k, n)=(5,9)$ is Newman-admissible. Larger degrees  $n=2m+1$, $m \in \N$, $m \geq 5$ are covered using polynomials $g(z)=f(z)+z^n$. These polynomials have the pattern of coefficients in $1001000111(0)^{2m-9}1$ with period of length $1$. By Proposition \ref{Add1Prop}, $g(z)$ has exactly $N(g)=5$ zeros in $\DD$ . Therefore, all pairs $(5, n)$, where $n \geq 9$ is odd, are Newman-admissible.  For $n$ even, $z=-1$ is a root of $g(z)$, hence our restriction to the odd $n$. Alternatively, for odd $n \geq 15$ one could also use the pattern no.~8  with period of length $2$ from Table \ref{patternsNewman345}.

For the even $n \geq 28$, $n \not \equiv 6 \pmod{8}$, we use Newman polynomials that arise from the pattern no.~7  from Table \ref{patternsNewman345} with the period of length $2$. Missing even degrees $n \equiv 6 \pmod{8}$, $n \geq 14$ are covered by using pattern no.~9  in Table \ref{patternsNewman345} with the period of length $8$.

\emph{Case $k=4$:}
For an even $n$, take $f^*(z) = 1 + z+ z^2 +z^6+z^9$. Here, $f^*(z)$ is reciprocal to $f(z)$ used in the $k=5$ case.   One has $N(f^*) = 4$, $|f^*(z)| > 1$ for $z \in \partial \DD \setminus\{-1\}$ and $f^*(-1)=1$. By Proposition \ref{Add1Prop}, the polynomial $g(z) = f^*(z)+z^n \in \NN_n$ of even degree  $n=2m$, $m \geq 5$ with pattern $1110001001(0)^{2m-10}1$ has $N(g)=4$ zeros in $\DD$.  Therefore, every pair $(4, n)$ with even $n \geq 10$ is Newman admissible. Alternatively, one could cover all even $n \geq 14$ by using pattern no.~4 with period of length $2$ from Table \ref{patternsNewman345}.

For the odd $n \geq 9$, use Newman polynomials of degree $n$ that arise from patterns no.~5  and no.~6  in Table \ref{patternsNewman345} with periods of length $4$ for each residue class $n \equiv 1,3 \pmod{4}$.

\emph{Case $k=3$:} All pairs $(3, n)$ of even degrees $n \geq 16$ are shown to be Newman--admissible by looking at polynomials $h(z)$ in Table \ref{patternsNewman345} with $N(h)=3$ that arise from pattern no.~1. Odd degrees $n \geq 9$ that are $n \not\equiv 3 \pmod{8}$ are covered by using pattern no.~2 with period of length $2$. Degrees $n \geq11$ that are $\equiv 3 \pmod{4}$ are covered by pattern no.~3 with the period of length $4$.

\begin{table}
{\small
\begin{tabular}{@{}lcccccccccccccccc@{}}
\toprule
$k =$ 			& $3$		&	$4$	&	$5$	&	$6$ 	& $7$		& $8$		& $9$		& $10$	 & $11$ & $12$ & $13$ & $14$ & $15$ & $16$ & $17$ & $18$\\
$n \geq$		& $16$		&	$9$	& 	$28$	&	$12$	& $15$		& $14$		& $17$		& $17$	 & $18$ & $18$ & $21$ & $21$ & $23$ & $23$ & $25$ & $25$\\
\bottomrule
\end{tabular}
}
\caption{Degrees of Newman polynomials obtained from patterns in Tables \ref{largeNewman} and \ref{patternsNewman345}.}\label{leastDegree}
\end{table}

Thus, for each $k \geq 3$, we now have $n_k \in \N$, such that the pair $(k, n)$ is Newman--admissible whenever $n \geq n_k$.  Lower bounds $n_k$ for $3 \leq k \leq 18$ are summarized in Table \ref{leastDegree}.

By using explicit examples of Newman polynomials of degree $n \in [k+3, n_k]$ from Table \ref{gapTable}, we can reduce intermediate lower bounds $n_k$ further: it turns out, $(k, n)$ with degrees $n \geq k + 3$ are all admissible for $k \in [4, 18]$. A tiny exception is $k=3$: from examples  in Table \ref{gapTable} and infinite sequences constructed above from patterns in Table \ref{patternsNewman345}, we see that pairs $(3, n)$ and, by reciprocation, $(n-3, n)$ are Newman--admissible for $n \geq 7$.

Thus, $(k, n)$ is Newman-admissible for every $n \geq 7$ and $k$ in the interval $I_n :=[3, \min\{18, n-3\}]$. For $7 \leq n \leq 21$, $I_n=[3, n-3]$ -- which are the full intervals that need to be covered. For $n \geq 22$, $I_n=[3, 18]$. By the reciprocation, pairs $(k, n)$ with $k$ in the interval $J_n := [n-18, n-3]$ for $n \geq 22$ are Newman admissible as well. Thus, admissible is every pair $(k, n)$, where $k$ belongs to the interval
\begin{equation}\label{eqIJ}
		I_n \cup J_n =\begin{cases}
			[3, n-3] & \text{ for } 7 \leq n \leq 37,\\
			[3, 18] \cup [19, 34] = [3, n-3] \; \setminus \; ]18, 19[ & \text{ for } n = 37,\\
			[3, 18] \cup [20, 35] = [3, n-3] \; \setminus \; ]18, 20[ & \text{ for } n = 38,\\
			[3, 18] \cup [21, 36] = [3, n-3] \; \setminus \; ]18, 21[ & \text{ for } n = 39,\\
			[3, n-3] \; \setminus \; ]18, n-18[ & \text{ for } n \geq 40.\\
		\end{cases}					
\end{equation}
Above, $]a, b[ \subset \R$ stands for the open interval between $a, b \in \R$, to avoid the confusion with the notation of a pair $(k, n)$.

From first part of the proof, we already know that `most' pairs $(k, n)$ with
$k \in [19, n-19] \cap \N$, $n \geq 40$, are Newman-admissible. For $n \geq 40$, by combining them with pairs where $k \in I_n \cup J_n$, we cover every $k \in [3, n-3]$.

For $3 \leq n \leq 39$, admissible pairs $(k, n)$ where $k \in I_n \cup J_n$ in \eqref{eqIJ} cover all $k \in [3, n-3]$, except for $3$ missing $k$ cases $(19, 38)$, $(19, 39)$, and $(20, 39)$ that were left out in \eqref{eqIJ}. First two missing pairs can be shown to be Newman--admissible by using $1+z+z^{9}+z^{35}+z^{38}$ and $1+z+z^{21}+z^{39}$, respectively, both of which have $k=19$ zeros in $\DD$ and no zero on $\partial\DD$. The last missing case $(20, 39)$ is then covered by reciprocation of $(19, 39)$.
\end{proof}

\begin{proof}[Proof of Theorem \ref{Thm_k2_Newman}]
It is enough to deal with the $k=2$ case, since the $k=n-2$ follows by reciprocation. For $k=2$, even degrees $n \geq 6$ are covered using the polynomials with the pattern no.~1 from Table \ref{patternsNewman2} pairs $(2, n)$ that have $k=2$ zeros inside $\DD$ and no zero on $\partial \DD$. For $k=2$, the degree $n=4$ can be covered by $f(z)=1+z+z^4$ that originates from Pattern no.~6 with $m=0$. Pattern no.~2 covers every degree $n \geq 5$, $n \equiv 2 \pmod{3}$. Patterns no.~6 and no.~7 cover every $n \geq 4$, $n \equiv 4 \pmod{5}$ and every $n \geq 3$, $n \equiv 3 \pmod{5}$, respectively. All together, these $4$ patterns cover all $n \geq 3$ with an exception of residue classes $n \equiv 1, 7, 15, 21, 25, 27 \pmod{30}$. One easily checks that the values of $n$, smaller or equal to $20$, that are not covered by one of the aforementioned patterns are: $n \in \{1, 2, 7, 15\}$. Cases $n=1$ and $n=2$ are inadmissible in a trivial way; $n=7$  by $1+z+z^2+z^4+z^7$ and $n=15$ by $1+z+z^2+z^4+z^7+z^9+z^{10}+z^{12}+z^{15}$.
\end{proof}

\begin{proof}[Proof of Theorem \ref{notNewmanAdmThm}] For $k \in \{3, n-3\}$, degrees $n \geq 7$, are Newman--admissible by Theorem \ref{mainNewmanThm}. For $n=5$, $f(z)=1+z^3+z^5$ has $k=3$ zeros inside $\DD$ and none on $\partial\DD$. An exhaustive search in $\NN_6$ shows that $(3, 6)$ is not--Newman admissible.  
For $k=2$, the proof again is computational, by complete examination of all Newman polynomials of degree $\leq 35$, see Section \ref{sec_exhaustive}.
\end{proof}

\begin{proof}[Proof of Theorem \ref{Thm_k2_Littlewood}]
By Table \ref{patternsLittlewood2}, for every $n \geq 3$ and $n \not\equiv 1 \pmod{6}$, there exists a regular $f \in \LL_n$ with $N(f)=2$ and $N(f^*)=n-2$. For $n$ in $3 \leq n \leq 12$, only $n = 7$ has remainder $1 \pmod{6}$.  To see that pairs $(2, 7)$ and $(5, 7)$ are Littlewood--admissible, consider sporadic $f(z)=1+z+z^2 -z^3 +z^4 - z^5 +z^6 -z^7$ with $N(f)=2$.
\end{proof}

\begin{proof}[Proof of Theorem \ref{Thm_k3to11_Littlewood}]
Let $k$ be $3 \leq k \leq 11$. By Table \ref{patternsLittlewood3to11}, there exists $f \in \LL_n$ with $N(f)=k$ for every $n \geq k+3$ if $k$ is odd and for every $n \geq k+4$ if $k$ is even. To cover the values $n=k+3$ for $k=4$, $6$, $8$, $10$, for each $k$ take a Littlewood polynomial $f(z)$ of degree $n=k+3$ with $N(f)=3$ that corresponds to the pattern no.~1 from Table \ref{patternsLittlewood3to11}: its reciprocal then has $\deg{f^*}=k+3$ and $N(f^*)=n-3=k$.
\end{proof}

\begin{proof}[Proof of Theorem \ref{mainLittlewoodThm}]
If the pair $(k, n)$ is realized by $f(z)$ with $N(f)=k$, $U(f)=0$, then the pair $(n-k, n)$ is realized by $f^*(z)$. The reflection $(k, n) \mapsto (n-k, n)$ between valid pairs preserves the admissibility.

Assume that $n$ is even. By Corollary \ref{colSPL}, every odd $k \in [3, n/2]$ is Littlewood admissible. By reflection, $(k', n)$ with $k' = n-k$ are also admissible and they constitute all odd integers $k' \in [n/2, n-3]$. Hence, every odd $k \in [3, n/2] \cup [n/2, n-3] = [3, n-3]$ for $n \geq 6$ is also admissible. In a similar way, every even $k \in [n/2, 2n/3]$ is admissible, and so is every even $k'=n-k \in [n/3, n/2]$. This means every even $k \in [n/3, n/2] \cup [n/2, 2n/3]=[n/3, 2n/3]$ is admissible.

Finally, assume that $n$ is odd. Every $(k, n)$ with odd $k \in [3, n/2]$ is admissible by Corollary \ref{colSPL}. Hence, every $(k', n)$ with even $k'=n-k \in [n/2, n-3]$ is admissible as well. It remains to note that for $n \geq 9$, intervals $[3, n/2]$ and $[n/2, n-3]$ absorb respective intervals $[n/3, n/2]$ and $[n/2, 2n/3]$ that can be obtained from  the $n$--odd, $k$--even case in Corollary \ref{colSPL} and its reflection. 
\end{proof}

\begin{proof}[Proof of Theorem \ref{thm_inadm_Littl}] 
The proof is computational, by complete examination of $\LL_n$ for $2 \leq n \leq 31$, see Section \ref{sec_exhaustive}.
\end{proof}

\begin{proof}[Proof of Theorem \ref{thmSPL}]
Formula \eqref{eqSPL} is obtained directly from the pattern $(+-+)^m+(-)^l$. It expands into
\[
h(z)=\frac{1 - z + z^2 - 2z^{2+3m}(1+z)  + z^{3m+l+1}(1+z+z^2)}{1-z^3}.
\]
Rewrite it as
\begin{equation}\label{eqForm1}
h(z) = \frac{f(z) + z^{3m+l+1}g(z)}{1-z},
\end{equation}
where
\begin{equation}\label{eqForm2}
f(z) :=\frac{1 - z + z^2 - 2z^{2+3m}(1+z)}{1+z+z^2}, \qquad g(z):=1.
\end{equation}

Denote the  numerator of Eq. \eqref{eqForm1} by $h_1(z):=(1-z)h(z)$ and the numerator polynomial in Eq. \eqref{eqForm2} by $f_1(z):=(1+z+z^2)f(z)$. By Lemma \ref{lemPrep1}, $\abs{f(z)} \geq \abs{g(z)}$ for $z \in \partial\DD$, with equality attained at points $z=1$, $e^{\pm 2\pi i/3}$ that are the only possible unimodular zeros of $h_1(z)$. Calculations show that $h_1(1) = 0$, $h_1'(1)=l-m-1$. Thus $z=1$ is a simple zero of $h_1(z)$ whenever $l \ne m+1$. A similar, but lengthier calculation shows that $h_1(e^{\pm 2\pi i/3}) \ne 0$ for $l, m \in \N$.

We now apply the formula of Boyd from Proposition \ref{propBoyd} to $h_1(z)$, $f(z)$ and $g(z)$ with $n=3m+l+1$ (here, $n$ denotes the power of $z$ in Proposition \ref{propBoyd}, not the degree of $h(z)$). We know that $z=1$ is the only possible exit point. By using the identity $zf'(z)/f(z) = z f_1'(z)/f_1(z) - z(1+2z)/(1+z+z^2)$, one evaluates $f'(1)/f(1)= (-9-12m)/(-3)-1 = 2+4m$. Since $g'(z)=0$, $n_0 = \Re\left.{\left(zf'(z)/f(z) - zg'(z)/g(z)\right)}\right|_{z=1} = 2+4m$ in Proposition \ref{propBoyd}. Therefore, $E(f, g) = 1$, for $n < n_0$ and $E(f, g) = 0$ for $n>n_0$. Since $n=3m+l+1$, $n_0=4m+2$, this is equivalent to $l < m+1$ and  $l > m+1$, respectively. From Lemma \ref{lemPrep2}, we know that $N(f)=2m+1$, $U(f)=0$. Boyd's formula yields $N(h)=N(h_1)=N(f) - E(f, g)=2m$, if $l < m+1$, and $N(h)=2m+1$, if $l>m+1$. This completes the proof.
\end{proof}

\begin{proof}[Proof of Corollary \ref{colSPL}]
Consider the even $k \geq 2$ case first. To obtain $f \in \LL_n$ with $N(f)=k$, $U(f)=0$ from Theorem \ref{thmSPL},  set $m = k/2$, $l=n-3m=n-3k/2$ in the `even' case of the formula therein. Then $k \geq 2$ yields $m \geq 1$, and $k\leq 2n/3$ results in $l \geq 0$. One must ensure that $l<m+1$. Since we are working with integers, this is equivalent to $l \leq m$, or $n-3m \leq m$, and this translates to $k \geq n/2$. Thus, every $(k, n)$ with even $k \in [n/2, 2n/3]$ is Littlewood--admissible.

For $k$ odd, pick $m = (k-1)/2$, $l=n-3m$ in the `odd' case of Theorem \ref{thmSPL}. For $k$ in range $3 \leq k \leq n/2$, $m \geq 1$ and $l = n - 3(k-1)/2 \geq (n+6)/4 \geq 2$. For integers $l$ and $m$, the inequality $l>m+1$ means $l \geq m+2$.  The later is equivalent to $n\geq 4m+2= 2k$, which is true. Thus, for every odd $k \in [3, n/2]$, $(k, n)$ is Littlewood--admissible.
\end{proof}

\begin{table}
\def\arraystretch{1.2}
\begin{tabular}{@{}cccl@{}}
\toprule
$N(f)$ & $\deg{f}$ & $\min{f}$ & Coefficients of $f(z)$, low to high\\
\midrule
$6$ & $12$ & $1.362$ & $1111100110101$\\
$7$ & $15$ & $1.183$ & $1100111100000101$\\
$8$ & $14$ & $1.025$ & $110011001011111$\\
$9$ & $17$ & $1.248$ & $100101000100011111$\\
$10$ & $17$ & $1.254$ & $100000101000110111$\\
$11$ & $18$ & $1.018$ & $1100011000010111011$\\
$12$ & $18$ & $1.029$ & $1000101001100101111$\\
$13$ & $21$ & $1.235$ & $1001001101010111001111$\\
$14$ & $21$ & $1.218$ & $1000100010001010010111$\\
$15$ & $23$ & $1.365$ & $100001010100111001011011$\\
$16$ & $23$ & $1.167$ & $100000101001101000110111$\\
$17$ & $25$ & $1.161$ & $10000100111001010111010011$\\
$18$ & $25$ & $1.143$ & $10101010101011010110011111$\\
\bottomrule
\end{tabular}
\caption{$f(z) \in \NN$ of smallest possible degree with largest minima greater than 1 for $6 \leq N(f) \leq 18$.}\label{largeNewman}
\end{table}

\begin{landscape}
\begin{table}
\begin{tabular}{@{}rcllccl@{}}
\toprule
no. & Degree					& Pattern 						& Polynomial $h(z)$ \\
\midrule
1. 	& $6+2m$, $m \geq 0$	& $110(01)^m0001$			& $\left(1+z-z^2-z^3+z^4 - z^{4+2m}(1-z^2+z^4)\right)/(1-z^2)$\\
2. 	& $2+3m$, $m \geq 1$	& $111(011)^m$				& $\left(1+z+z^2-z^3 - z^{4+3m}(1+z)\right)/(1-z^3)$\\
3.	& $2+4m$, $m \geq 1$	& $101(0001)^m$ 			& $\left(1+z^2-z^4 - z^{6+4m}\right)/(1-z^4)$\\
4. 	& $2+4m$, $m \geq 1$	& $101(1001)^m$			& $\left(1-z+2z^2-z^3 -z^{3+4m}(1-z+z^2)\right)/(1-z+z^2-z^3)$\\
5.	& \begin{tabular}{c}
				$4+4m$, $m \geq 1$,\\
				$m \not\equiv 0 \pmod{3}$
	 \end{tabular}  & $11011(01)^{2m}$					& $\left(1+z-z^2+z^4-z^5-z^{6+4m}\right)/(1-z^2)$\\
6. 	& $4+5m$, $m \geq 0$	& $11001(01001)^m$		& $\left(1+z+z^4-z^5 -z^{6+5m}(1+z^3)\right)/(1-z^5)$\\
7. 	& $3+5m$, $m \geq 0$	& $1011(00011)^m$			& $\left(1+z^2+z^3-z^5-z^{7+5m}(1+z)\right)/(1-z^5)$\\
8. 	& $4+6m$, $m \geq 0$	& $10111(000111)^m$		& $\left(1-z+z^2+z^3-z^4-z^{8+6m}\right)/(1-z+z^3-z^4)$\\
9. 	& $2+6m$, $m \geq 1$	& $111(001)^{2m}$			& $\left(1+z+z^2-z^3-z^4-z^{5+6m}\right)/(1-z^3)$	\\
10. & $4+8m$, $m \geq 0$	& $11101(00101101)^m$	& $\left(1-z^3+2z^4-z^5-z^{7+8m}(1-z+z^2)\right)/(1-z+z^4-z^5)$\\
\bottomrule
\end{tabular}
\caption{Regular $h(z) \in \NN$ with $N(h)=2, U(h)=0$.}\label{patternsNewman2} 
\end{table}

\begin{table}
\begin{tabular}{@{}llll@{}}
\toprule
no. & Degree 					& Pattern				& Polynomial $h(z)$ \\
\midrule
1.	& $2+2m$, $m \geq 1$	& $++(-)^{2m+1}$	& $(1 - 2z^2 + z^{3+2m})/(1-z)$ \\
2.  & $2+2m$, $m \geq 1$	& $++-(-+)^m$		& $(1 + 2z - 2z^3 +z^{3+2m})/(1+z)$ \\
3.  & $3m$, $m \geq 1$		& $+(++-)^m$		& $(1 + z + z^2 - 2z^3 - z^{1+3m}(1+z-z^2))/(1 - z^3)$ \\
4.  & $2+3m$, $m \geq 1$	& $+++(-++)^m$	& $(1 + z + z^2 - 2z^3 + z^{3+3m}(1 - z - z^2))/(1 - z^3)$ \\
\bottomrule
\end{tabular} 
\caption{Regular $h(z) \in \LL$ with $N(h)=2$ and $U(h)=0$, normalized by $a_0=a_1=1$.}\label{patternsLittlewood2} 
\end{table}

\begin{footnotesize}
\begin{table}
\begin{tabular}{@{}lclll@{}}
\toprule
no. & $N(h)$ & Degree  & Pattern & Polynomial $h(z)$ \\
\midrule
1 & $3$	& $12+2m$, $m \geq 2$					& $110010(01)^m0001001$					& \begin{tabular}{l}
																												$\left(1+z-z^2-z^3+z^4-z^6+z^7 - \dots \right.$\\
																												$\left. \dots - z^{7+2m}(1-z^2+z^4-z^5+z^7)\right)/(1-z^2)$
																											\end{tabular}\\
2 & $3$	& \begin{tabular}{c}
				$9+2m$, $m \geq 0$\\
				$m \not\equiv 1 \pmod{4}$
		    \end{tabular}								& $110011(01)^m0001$						& $\left(1+z-z^2-z^3+z^4+z^5-z^6-z^{7+2m}(1-z^2+z^4)\right)/(1-z^2)$\\

3 & $3$	& $3+4m$, $m \geq 2$					& $1011(0001)^m$							&	$\left(1+z^2+z^3-z^4-z^6 - z^{7+4m}\right)/(1-z^4)$\\

4 & $4$	& $14+2m$, $m \geq 0$					& $111001011(01)^m000101$ 				& $\left(1+z-z^3-z^4+z^5+z^8-z^9 - z^{10+2m}(1-z^2+z^6)\right)/(1-z^2)$\\

5 & $4$	& $9+4m$, $m \geq 0$					& $1011(1100)^m111011$					& \begin{tabular}{l}
																										$\left(1-z+2z^2-z^3+z^4-z^6 - \dots \right.$\\
																										$\left.  \dots -z^{6+4m}(1-z+z^2-z^3-z^6)\right)/(1-z+z^2-z^3)$
																							   			\end{tabular}\\

6 & $4$ 	& $7+4m$, $m \geq 1$					& $11101(0110)^m011$						& $\left(1+z^2-z^3+z^4-2z^5+2z^6-z^7-z^{10+4m}\right)/(1-z+z^2-z^3)$\\

7 & $5$ 	& \begin{tabular}{c}
				$22+2m$, $m \geq 3$\\
				$m \not\equiv 0 \pmod{4}$
			\end{tabular}							& $11001001001011(01)^m001001001$	&	\begin{tabular}{l}
																											$\left(1+z-z^2-z^3+z^4-z^6+z^7-z^9+z^{10}+z^{13}-z^{14} -\dots\right.$\\
																											$\left. \dots -z^{15+2m}(1-z+z^3-z^4+z^6-z^7+z^9)\right)/(1-z^2)$
																										\end{tabular}	\\

8 & $5$	& $15+2m$, $m \geq 0$					& $110011011011(01)^m0001$				& \begin{tabular}{l}
																											$\left(1+z-z^2-z^3+z^4+z^5-z^6+z^8-z^9+z^{11} -z^{12} - \dots \right.$ \\
																											$\left. \dots - z^{13+2m}(1-z^2+z^4)\right)/(1-z^2)$
																										\end{tabular}	\\

9 & $5$ 	& $6+8m$, $m \geq 1$					& $1001011(10001011)^m$					& $(1+z^3+z^5+z^6+z^7-z^8-z^{7+8m}(1+z^4+z^6+z^7))/(1-z^8)$\\ 
																	
\bottomrule
\end{tabular} 
\caption{Examples of $h(z) \in \NN$ with $N(h)=3, 4, 5$ and $U(h)=0$.}\label{patternsNewman345} 
\end{table}

\begin{table}
\begin{tabular}{@{}lclll@{}}
\toprule
no. & $N(h)$ & Degree  & Pattern & Polynomial $h(z)$ \\
\midrule
1 & $3$	& $3+m$, $m \geq 3$	& $+-++(-)^m$			& $(1 - 2z + 2z^2 - 2z^4 + z^{4+m})/(1-z)$\\
2 & $4$	& $5+m$, $m \geq 3$ 	& $+---+-(+)^m$		& $(1 - 2z + 2z^4 - 2z^5 + 2z^6 -z^{6+m})/(1 - z)$\\
3 & $5$	& $5+m$, $m \geq 3$	& $+-+-++(-)^m$		& $(1 - 2z + 2z^2 - 2z^3 + 2z^4 - 2z^6 - z^{6+m}/(1 - z)$\\
4 & $6$	& $7+m$, $m \geq 3$	& $+-+-+---(+)^m$ 		& $(1 - 2z + 2z^2 - 2z^3 + 2z^4 - 2z^5 + 2z^8 - z^{8+m})/(1 - z)$\\
5 & $7$	& $7+m$, $m \geq 3$	& $+-+-+-++(-)^m$	& $(1 - 2z + 2z^2 - 2z^3 + 2z^4 - 2z^5 + 2z^6 - 2z^8 - z^{8+m})/(1 - z)$\\
6 & $8$ 	& $9+m$, $m \geq 3$	& $++---+++-+(-)^m$	& $(1 - 2z^2 + 2z^5 - 2z^8 + 2z^9 - 2z^{10} + z^{10+m})/(1 - z)$\\
7 & $9$ 	& $9+m$, $m \geq 3$	& $+-+-+-+-++(-)^m$	&	\begin{tabular}{l}
																			$\left(1 - 2z + 2z^2 - 2z^3 + 2z^4 - 2z^5 + 2z^6 -\dots\right.$\\
																			$\left. \dots - 2z^7 + 2z^8 - 2z^{10} + z^{10+m}\right)/(1-z) $
																		\end{tabular}	\\
8 & $10$	& $11+m$, $m \geq 3$	& $+---+++---+-(+)^m$ & $(1 - 2z + 2z^4 - 2z^7 + 2z^{10} - 2z^{11} + 2z^{12} -z^{12+m})/(1 - z)$\\
9 & $11$ 	& $11+m$, $m \geq 3$	& $+-+-+-+-+-++(-)^m$	& \begin{tabular}{l}
																			$\left(1 - 2z + 2z^2 - 2z^3 + 2z^4 - 2z^5 + 2z^6 - 2z^7  +\dots\right.$\\
																			$\left. \dots + 2z^8 - 2z^9 + 2z^{10} - 2z^{12} + z^{12+m}\right)/(1-z) $
																		\end{tabular}	\\
\bottomrule
\end{tabular}
\caption{Examples of $h(z) \in \LL$ with $3 \leq N(h) \leq 11$ and $U(h)=0$.}\label{patternsLittlewood3to11} 
\end{table}
\end{footnotesize}
\end{landscape}

\begin{table}
\renewcommand{\arraystretch}{1.2}
{\small
\begin{tabular}{@{}lll@{}}
\toprule
 ${k=3}$										& ${k=6}$											& ${k=13}$ \\
$1 + z + z^{7}$								& $1 + z^{6} + z^{9}$ 							&	$1 + z^{7} + z^{9} + z^{15} + z^{16}$\\
$1 + z + z^{3} + z^{5} + z^{8}$			& $1 + z^{6} + z^{10}$						& 	$1 + z^{11} + z^{13} + z^{17}$\\
$1 + z + z^{9}$								& $1 + z^{6} + z^{11}$						& 	$1 + z^{5} + z^{11} + z^{15} + z^{18}$\\
$1 + z + z^{3} + z^{5} + z^{10}$		& ${k=7}$											& 	$1 + z^{5} + z^{17} + z^{19}$\\
$1 + z + z^{3} + z^{11}$					& $1 + z^{3} + z^{5} + z^{9} + z^{10}$	& 	$1 + z + z^{13} + z^{19} + z^{20}$\\
$1 + z + z^{5} + z^{7} + z^{12}$		& $1 + z^{9} + z^{11}$						& 	${k=14}$\\
$1 + z + z^{3} + z^{13}$					& $1 + z + z^{7} + z^{11} + z^{12}$		& 	$1 + z^{12} + z^{16} + z^{17}$\\
$1 + z + z^{7} + z^{9} + z^{14}$		& $1 + z^{7} + z^{13}$						& 	$1 + z^{15} + z^{16} + z^{18}$\\
$1 + z^{3} + z^{9} + z^{15}$			& $1 + z + z^{3} + z^{9} + z^{14}$			& 	$1 + z^{8} + z^{16} + z^{19}$\\
${k=4}$											& ${k=8}$ 											& 	$1 + z^2 + z^{19} + z^{20}$\\
$1 + z^{4} + z^{7}$						& $1 + z^{2} + z^{10} + z^{11}$			& 	${k=15}$\\
$1 + z^{2} + z^{8}$						& $1 + z^{8} + z^{12}$						& 	$1 + z^{5} + z^{11} + z^{13} + z^{17} + z^{18}$\\
${k=5}$											& $1 + z^{10} + z^{13}$						& 	$1 + z^{5} + z^{11} + z^{16} + z^{19}$\\
$1 + z + z^{5} + z^{7} + z^{8}$			& ${k=9}$ 											& 	$1 + z^{7} + z^{13} + z^{17} + z^{20}$\\
 $1 + z^{5} + z^{9}$						& $1 + z^{5} + z^{7} + z^{11} + z^{12}$	& 	$1 + z^{15} + z^{17} + z^{21}$\\
 $1 + z + z^{3} + z^{7} + z^{10}$		& $1 + z^{3} + z^{11} + z^{13}$ 			& 	$1 + z^{3} + z^{17} + z^{21} + z^{22}$\\
 $1 + z^{3} + z^{11}$						& $1 + z + z^{9} + z^{13} + z^{14}$ 		& 	${k=16}$\\
 $1 + z + z^{3} + z^{5} + z^{12}$		& $1 + z^{9} + z^{15}$ 						& 	$1 + z^{14} + z^{15} + z^{16} + z^{19}$\\
 $1 + z + z^{13}$							& $1 + z + z^{9} + z^{13} + z^{16}$		& 	$1 + z^{8} + z^{16} + z^{20}$\\
$1 + z + z^{3} + z^{5} + z^{14}$		& ${k=10}$ 											& 	$1 + z^{10} + z^{20} + z^{21}$\\
 $1 + z + z^{15}$							& $1 + z^{6} + z^{12} + z^{13}$			& 	$1 + z^{4} + z^{20} + z^{22}$\\
 $1 + z + z^{3} + z^{5} + z^{16}$		& $1 + z^{2} + z^{13} + z^{14}$			& 	${k=17}$\\
 $1 + z + z^{7} + z^{17}$					& $1 + z^{10} + z^{15}$						& 	$1 + z^{2} + z^{9} + z^{11} + z^{17} + z^{19} + z^{20}$\\
 $1 + z + z^{3} + z^{5} + z^{18}$		& $1 + z^{13} + z^{16}$						& 	$1 + z^{9} + z^{11} + z^{20} + z^{21}$\\
 $1 + z + z^{3} + z^{19}$					& ${k=11}$ 											& 	$1 + z^{11} + z^{17} + z^{19} + z^{22}$\\
 $1 + z + z^{3} + z^{7} + z^{20}$		& $1 + z^{5} + z^{7} + z^{13} + z^{14}$	& 	$1 + z^{13} + z^{21} + z^{23}$\\
 $1 + z + z^{3} + z^{21}$					& $1 + z^{9} + z^{13} + z^{15}$			& 	$1 + z + z^{21} + z^{23} + z^{24}$\\
 $1 + z + z^{3} + z^{7} + z^{22}$		& $1 + z + z^{13} + z^{15} + z^{16}$		& 	${k=18}$\\
 $1 + z + z^{3} + z^{23}$					& $1 + z^{15} + z^{17}$						& 	$1 + z^{6} + z^{12} + z^{18} + z^{21}$\\
 $1 + z + z^{5} + z^{7} + z^{24}$		& ${k=12}$ 											& 	$1 + z^{2} + z^{14} + z^{16} + z^{22}$\\
 $1 + z + z^{3} + z^{25}$					& $1 + z^{6} + z^{12} + z^{15}$			& 	$1 + z^{14} + z^{22} + z^{23}$\\
 $1 + z + z^{3} + z^{7} + z^{26}$		& $1 + z^{4} + z^{14} + z^{16}$			& 	$1 + z^{6} + z^{21} + z^{24}$\\
 $1 + z + z^{5} + z^{27}$					& $1 + z^{2} + z^{16} + z^{17}$			&	\\
\bottomrule
\end{tabular}
}
\caption{Newman polynomials $f(z) \in \NN$ with \newline $N(f)=k$, $3 \leq k \leq 18$ for missing low degrees.}\label{gapTable}
\end{table}

\section{Auxiliary propositions and lemmas}\label{sec_Lem}

Propositions \ref{Add1Prop}, \ref{Add2Prop}, \ref{propBoyd}, \ref{UnimodularProp} that are collected in Section \ref{sec_Lem} for the reference in our proofs are variations on a basic method due to Salem \cite{S1, S2}, and Pisot and Dufresnoy \cite{DP1, DP2}. See also \cite{PisotBook, SalemBook}. Subsequently, this technique was applied by numerous authors \cite{C1, C2, Cant, Grand, Smy2}. Boyd perfected it in \cite{B1}: his result is essential to our applications and is recreated as Proposition \ref{propBoyd}.

Criteria stated as Lemmas \ref{ResolventLem} and \ref{MultiplicityLem} were used in actual computations. The last two lemmas, \ref{lemPrep1} and \ref{lemPrep2}, concern auxiliary polynomials that arise in the proof of Theorem \ref{mainLittlewoodThm}.

In Theorem \ref{mainNewmanThm}, we will make use of two simple constructions that involve a polynomial which stays large on $\partial \DD$.

\begin{proposition}\label{Add1Prop}
Let $f(z) \in \C[z]$ and $m := \min_ {\abs{z}=1} \abs{f(z)}$. If $m>1$, then, for every $n \in \N$, the polynomial $g(z) := f(z) + z^n$ has $N(g) = N(f)$ zeros in $\DD$ and $U(g)=0$ zeros on $\partial\DD$. If $m=1$, then $N(g) = N(f)$,  $U(g)=0$ still holds provided that $g(z) \ne 0$ at points $z \in \partial\DD$ where $|f(z)|=1$.
\end{proposition}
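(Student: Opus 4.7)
The natural tool here is Rouché's theorem, applied to the pair $f(z)$ and $z^n$ on the unit circle $\partial\DD$. For the case $m > 1$, the standard form of Rouché is immediate: on $\partial\DD$ we have $|f(z)| \geq m > 1 = |z^n|$, so $f(z)$ and $f(z)+z^n=g(z)$ have the same number of zeros inside $\DD$, giving $N(g)=N(f)$. Moreover, $g$ has no zeros on $\partial\DD$ at all: if $g(z_0)=0$ with $|z_0|=1$, then $f(z_0) = -z_0^n$ forces $|f(z_0)|=1$, contradicting $m>1$. Hence $U(g)=0$.

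For the boundary case $m=1$, the strict inequality needed for standard Rouché is lost, so I would use either a symmetric version of Rouché (as cited for Lemma \ref{RotLem}) or, more hands-on, a homotopy argument. The plan is to consider the family $h_t(z) := f(z) + t z^n$ for $t \in [0,1]$, so $h_0 = f$ and $h_1 = g$, and to show that $h_t$ does not vanish on $\partial\DD$ for any $t$ in this range. Suppose to the contrary that $h_t(z_0)=0$ with $|z_0|=1$. Then $f(z_0) = -t z_0^n$ and therefore $|f(z_0)| = t$. Since $|f(z)| \geq m = 1$ on $\partial\DD$, this forces $t \geq 1$, hence $t = 1$ and $|f(z_0)|=1$. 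But at $t=1$ this says $g(z_0)=0$ at a point where $|f(z_0)|=1$, which is precisely what the hypothesis excludes.

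Because the zeros of $h_t(z)$ depend continuously on $t$ and none of them can cross $\partial\DD$ as $t$ moves from $0$ to $1$, the count $N(h_t)$ is constant on $[0,1]$, yielding $N(f) = N(g)$. The statement $U(g)=0$ is built into the hypothesis in this case. The only subtle point in the argument is the $m=1$ boundary behavior; once the homotopy is set up, the bookkeeping reduces to the simple estimate $|f(z_0)|=t$, so I do not expect any real obstacle beyond writing out this deformation carefully.
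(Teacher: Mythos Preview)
Your proof is correct and essentially matches the paper's own argument: Rouch\'e's theorem for the strict case, and a continuity-of-roots deformation for the boundary case $m=1$. The only cosmetic difference is that the paper deforms via $g_t(z)=tf(z)+z^n$ with $t\downarrow 1$, whereas you deform via $h_t(z)=f(z)+tz^n$ with $t\uparrow 1$; the verification that no zero touches $\partial\DD$ during the homotopy is the same computation either way.
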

\begin{proof}
Take a real number $t>1$. By Rouch\'e's Theorem,  for the polynomial  $g_t(z)=tf(z) + z^n$ one has $N(g) = N(f)$. By the continuity of roots with respect to $t$, $N(g_t) = N(g)$ as  $t \to 1$  if no root of $g_t(z)$ crosses the unit circle, that is, $g(z) \ne 0$ for $z \in \partial \DD$. If $m>1$, this is obvious. If $m=1$, this is true if $g(z) \ne 0$ at all points $z \in \partial\DD$ where this minima $|f(z)|=1$ is achieved.
\end{proof}

\begin{proposition}\label{Add2Prop}
Suppose that $f(z) \in \C[z]$ satisfies $|f(z)|>2$ for every $z \in \partial\DD$. Then, for every $n, r \in \N$, the polynomial
\[
h(z) = 1 + z^rf(z) + z^n
\]
has $N(h) = N(f)+r$ zeros in $\DD$ and $U(h)=0$ zeros on $\partial\DD$.
\end{proposition}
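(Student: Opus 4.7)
The plan is a direct application of Rouch\'e's theorem. I split $h(z) = 1 + z^r f(z) + z^n$ as $h = g + k$, where $g(z) := z^r f(z)$ and $k(z) := 1 + z^n$, and compare their sizes on $\partial\DD$.

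First I would observe that on $\partial\DD$ one has $|g(z)| = |z|^r \cdot |f(z)| = |f(z)| > 2$ by the hypothesis, while $|k(z)| = |1 + z^n| \leq 1 + |z|^n = 2$ by the triangle inequality. Hence the strict inequality $|g(z)| > |k(z)|$ holds everywhere on $\partial\DD$. Rouch\'e's theorem then gives that $g(z)$ and $g(z) + k(z) = h(z)$ have the same number of zeros inside $\DD$. Since $g(z) = z^r f(z)$ has a zero of order $r$ at the origin together with the $N(f)$ zeros of $f$ that lie in $\DD$, we conclude $N(h) = N(f) + r$.

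For the unimodular part, the same strict inequality rules out zeros on $\partial\DD$: if $h(z_0) = 0$ for some $z_0$ with $|z_0| = 1$, then $g(z_0) = -k(z_0)$ would force $|g(z_0)| = |k(z_0)|$, contradicting $|g(z_0)| > 2 \geq |k(z_0)|$. Therefore $U(h) = 0$.

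There is no real obstacle here; the only point requiring a tiny bit of care is the boundary behaviour, which is handled by the same strict inequality that powers Rouch\'e's theorem. The essential content of the proposition lies entirely in the hypothesis $|f(z)| > 2$ on $\partial\DD$, which is exactly calibrated so that $|z^r f(z)|$ dominates $|1 + z^n|$ (whose maximum on the unit circle is $2$, attained when $z^n = 1$). This is also what makes the polynomial $f(z)$ of Equation \eqref{superNewman} valuable for the construction in Theorem \ref{mainNewmanThm}: once such an $f$ is in hand, Proposition \ref{Add2Prop} sweeps out a whole interval of admissible pairs by varying $r$ and $n$.
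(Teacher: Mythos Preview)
Your proof is correct and follows exactly the approach of the paper, which simply says to apply Rouch\'e's theorem to $z^r f(z)$ and $1+z^n$. You have merely written out the details that the paper leaves implicit.
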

\begin{proof}
Apply Rouch\'e's Theorem to polynomials $z^rf(z)$ and $1+z^n$.
\end{proof}

In more complicated cases, we will need to determine zero numbers for polynomials of the form $f(z)+z^ng(z)$, where $\abs{f(z)} \geq \abs{g(z)}$ that constantly appear through the literature on Pisot and Salem numbers \cite{PisotBook, SalemBook}. Their reemergence is not a coincidence: Cantor, based on the earlier work of Pisot and Dufresnoy \cite{DP1, DP2},  has shown that the polynomials of this shape always arise from the limit functions in the Schur tree, see Theorem 5.7 in \cite{Cant}, and also papers \cite{B4, Grand}.

To handle such situations, we supply a variant of Boyd's formula from \cite{B1}. The original formula in \cite{B1} and a variant exploited in \cite{BCFJ} were specialized to the case $g(z)=f^*(z)$. The proof that is presented here is a slightly more explanatory than the original one in \cite{B1}. The main idea, however, remains the same: to count the number of \emph{exit points} by tracking the sign of certain derivative. 

\begin{proposition}[Boyd's exit point formula]\label{propBoyd}
Let $h(z)=f(z)+z^ng(z)$, $n \in \N$, where $f, g \in \C[z]\setminus\{0\}$ satisfy $\abs{f(z)} \geq \abs{g(z)}$ for $z \in \partial \DD$. If all unimodular zeros of $\zeta \in \partial \DD$ of $h(z)$ are simple, and $f(\zeta) \ne 0$, then
\[
N(h)=N(f)-E(f, g),
\] where
\begin{equation}\label{eqDefE}
E(f, g) := \#\left\{ \zeta \in \partial \DD: h(\zeta) = 0, \; \Re\left(\frac{\zeta f'(\zeta)}{f(\zeta)} -\frac{\zeta g'(\zeta)}{g(\zeta)}\right) > n\right\}
\end{equation}
denotes the number of exit points. In particular, for every $n > n_0$, where the quantity 
\begin{equation}\label{eq_n0}
n_0 := \max_{\substack{ h(\zeta)=0,\\ \zeta \in \partial \DD}} \Re\left(\frac{\zeta f'(\zeta)}{f(\zeta)} -\frac{\zeta g'(\zeta)}{g(\zeta)}\right),
\end{equation}
one has $E(f, g)=0$ and $N(h)=N(f)$.
\end{proposition}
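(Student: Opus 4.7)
The plan is to study the one-parameter homotopy $h_t(z) := f(z) + t\,z^n g(z)$, $t\in[0,1]$, between $h_0 = f$ and $h_1 = h$, and to track how zeros cross $\partial\DD$. My first step would be to observe that the hypothesis forces $f$ to be non-vanishing on $\partial\DD$: if $f(\zeta)=0$ for some $\zeta\in\partial\DD$, then $|g(\zeta)|\le|f(\zeta)|=0$ gives $g(\zeta)=0$, hence $h(\zeta)=0$ with $f(\zeta)=0$, contradicting the hypothesis. Consequently, for every $0\le t<1$ the strict inequality $|f(z)|>|tz^n g(z)|$ holds on $\partial\DD$, and Rouch\'e's theorem simultaneously gives $N(h_t)=N(f)$ and the absence of any zero of $h_t$ on $\partial\DD$.

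Next, I would analyze the transition at $t=1$. Each unimodular zero $\zeta$ of $h$ is simple with $f(\zeta)\ne 0$; the relation $\zeta^n g(\zeta)=-f(\zeta)$ then forces $g(\zeta)\ne 0$ as well, so the implicit function theorem yields a unique smooth arc of zeros $\zeta(t)$ of $h_t$ with $\zeta(1)=\zeta$, defined in a neighborhood of $t=1$. Implicitly differentiating $f(\zeta(t))+t\,\zeta(t)^n g(\zeta(t))=0$ at $t=1$ and using $\zeta^n g(\zeta)=-f(\zeta)$ to eliminate the $g$-terms, a short simplification yields
$$\overline{\zeta}\,\zeta'(1) \;=\; \frac{1}{w},\qquad w \;:=\; \frac{\zeta f'(\zeta)}{f(\zeta)} - \frac{\zeta g'(\zeta)}{g(\zeta)} - n,$$
so that the radial velocity
$$\left.\frac{d|\zeta(t)|^2}{dt}\right|_{t=1}=2\,\Re\!\left(\overline{\zeta}\,\zeta'(1)\right)=\frac{2\,\Re(w)}{|w|^2}$$
is strictly positive precisely when $\Re(\zeta f'/f - \zeta g'/g)>n$, that is, precisely when $\zeta$ is an exit point in the sense of \eqref{eqDefE}.

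To conclude, I would compare interior counts across $t=1$. Each arc $\zeta(t)$ approaches $\zeta$ from the interior of $\DD$ for $t<1$ iff the radial velocity at $t=1$ is positive, iff $\zeta\in E(f,g)$. Exit arcs contribute to $N(h_t)=N(f)$ for all $t<1$ but leave $\DD$ at $t=1$; entry arcs (those with $\Re(w)<0$) were outside $\DD$ for $t<1$ and never contributed to $N(h_t)$. Since for $t<1$ there are no zeros on $\partial\DD$, no other root can cross into or out of $\DD$ as $t\to 1^-$. Hence $N(h)=N(f)-E(f,g)$, and the final statement that $n>n_0$ implies $E(f,g)=0$ and $N(h)=N(f)$ follows at once from the definition of $n_0$ in \eqref{eq_n0}.

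The main subtlety I anticipate is the borderline case in which $\Re(w)=0$ at some unimodular zero: the arc is then tangent to $\partial\DD$ at $t=1$, and the first-order radial velocity is inconclusive. To handle it rigorously one would either push the expansion of $|\zeta(t)|^2-1$ to second order in $t$, using simplicity of $\zeta$ as a zero of $h$ to fix a definite sign, or deform through the slightly perturbed family $f(z)+t(1+\eps)z^n g(z)$ with $\eps\to 0^+$, where the critical case is avoided for generic $\eps$, and recover the formula by continuity of interior root counts.
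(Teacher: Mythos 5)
Your argument is essentially the paper's: both deform $h$ through a one-parameter family (the paper uses $h(z,t)=tf(z)+z^n g(z)$ with $t\downarrow 1$, you use $f(z)+t\,z^n g(z)$ with $t\uparrow 1$, the same family up to $t\mapsto 1/t$), apply Rouch\'e to pin $N(h_t)=N(f)$ off the critical parameter value, invoke the implicit function theorem at each simple unimodular zero, and read the exit condition $\Re(\zeta f'/f-\zeta g'/g)>n$ off the sign of the radial velocity. You are slightly more careful than the paper in two places: you note explicitly that the hypotheses force $f\neq 0$ on $\partial\DD$ (which the paper uses implicitly when applying Rouch\'e), and you flag the tangential borderline case $\Re(w)=0$, which the paper passes over without comment and which is in fact excluded in its later applications by hypotheses such as $l\neq m+1$.
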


\begin{proof}
Consider
\[
h(z, t) := tf(z)+z^ng(z) \in \C[z, t],
\] where $t$ is taken to be real and $\geq 1$. Let us denote by $z(t)$ the branch of algebraic function, defined by the equation $h(z(t), t)=0$, that satisfies $z(1) = \zeta \in \partial{\DD}$. By the complex version of Implicit Function Theorem (see Theorem 3.1.4 in \cite{Schei}), $z(t)$ is well--defined and locally differentiable for all $t$ where $\partial{h}(z, t)/\partial{z}\ne 0$, and, in that case,
\begin{equation}\label{eqDerivative}
\frac{1}{z}\frac{dz}{dt} = -\frac{1}{z}\frac{\partial{h}(z, t)/\partial{t}}{\partial{h}(z, t)/\partial{z}}= \frac{1}{t}\cdot\frac{1}{n + zg'(z)/g(z) - zf'(z)/f(z)}.
\end{equation}
The non--vanishing of the denominator for $t=1$ in equation \eqref{eqDerivative} is equivalent to $h'(\zeta) \ne 0$, which holds since $\zeta$ is a simple root of $h(z)$.  As $\abs{f(z)} \geq \abs{g(z)}$ for $z \in \partial\DD$, one has $N(h_t)=N(f)$ for $t > 1$. By the continuity of $z(t)$, $N(h) = N(f) - E(f, g)$, where $E(f, g)$ is the number of branches $z(t)$ that cross $\partial\DD$ at $t=1$ from inside of $\DD$. Due to $f(\zeta) h'(\zeta) \ne 0$, $z'(1) \ne 0$ by Eq. \eqref{eqDerivative}. This means that, for an exit point, $\abs{z(t)}^2$ is increasing as $t \downarrow 1$. Since $t$ is \emph{decreasing} to $1$ from above, the later is equivalent to $\partial{\abs{z(t)}^2}/\partial{t} < 0$: by differentiating and using $\overline{\zeta}=1/\zeta$, one finds that this is equivalent $\Re{\left((1/z)\partial{z}/\partial{t}\right)} <0$ at $t=1$, which, in turn, through Eq. \eqref{eqDerivative} yields Eq. \eqref{eqDefE}.  
\end{proof}

We will need a convenient formal criterion to check the inequality $\abs{f(z)} \geq \abs{g(z)}$ on the computer.

 \begin{lemma}\label{ResolventLem}
Let $d:=\max\{\deg{f}, \deg{g}\}$, where $f(z)$, $g(z) \in \C[z]$. Then $\abs{f(z)} \geq \abs{g(z)}$ on the unit circle $|z|=1$ if and  only the resolvent polynomial
$R(z) := f(z)f^{*}(z)z^{d - \deg{f}} - g(z)g^*(z)z^{d - \deg{g}}$ satisfies one of the two conditions:
\begin{enumerate}
\item either $R(z)$ is a zero polynomial in $\C[z]$, or
\label{ResolventLemi}
\item The middle coefficient $r_{d}$ of $z^d$ term in $R(z)$ must be $>0$ (so in particular, $\deg{R}$ is even), and every unimodular zero of $R(z)$ is of even multiplicity.
\label{ResolventLemii}
\end{enumerate}
\end{lemma}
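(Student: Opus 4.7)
The plan is to convert the inequality $|f(z)|\ge|g(z)|$ on $\partial\DD$ into a positivity statement about a real trigonometric polynomial obtained from $R(z)$, and then translate that positivity into the stated conditions on the coefficients and zero multiplicities of $R$.

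The central observation is that for $z\in\partial\DD$ one has $\bar z=1/z$, so $p^*(z)=z^{\deg p}\overline{p}(1/z)=z^{\deg p}\overline{p(z)}$ for any $p\in\C[z]$, and hence $p(z)p^*(z)=z^{\deg p}|p(z)|^2$ on $\partial\DD$. Applying this to $f$ and $g$ yields the key identity
\[
\frac{R(z)}{z^d} \;=\; |f(z)|^2-|g(z)|^2 \qquad\text{for every } z\in\partial\DD.
\]
Thus the inequality $|f|\ge|g|$ on $\partial\DD$ is equivalent to $R(z)/z^d\ge 0$ on $\partial\DD$. First I would check that $\deg R\le 2d$ and that the coefficients of $R$ obey the Hermitian reciprocal relation $r_{d+k}=\overline{r_{d-k}}$; this is just the translation of the fact that $R(z)/z^d$ is a real-valued function on $\partial\DD$. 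In particular $r_d$ is real, and by Parseval's identity it equals the mean value
\[
r_d \;=\; \frac{1}{2\pi}\int_0^{2\pi}\bigl(|f(e^{it})|^2-|g(e^{it})|^2\bigr)\,dt \;=\;\sum_j|a_j|^2-\sum_j|b_j|^2.
\]

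For the forward direction, assume $|f|\ge|g|$ on $\partial\DD$. If $|f|=|g|$ identically on $\partial\DD$, then $R(z)/z^d$ vanishes on the uncountable set $\partial\DD$, hence $R\equiv 0$, giving case \ref{ResolventLemi}. Otherwise $|f|^2-|g|^2$ is a nonnegative continuous function on $\partial\DD$ that is strictly positive on a nonempty arc, so its mean $r_d$ is strictly positive. Moreover, at any unimodular zero $\zeta=e^{it_0}$ of $R$, the real function $T(t):=|f(e^{it})|^2-|g(e^{it})|^2$ attains a local minimum at $t_0$ (since $T\ge 0$ and $T(t_0)=0$); writing $R(z)=(z-\zeta)^m\widetilde R(z)$ with $\widetilde R(\zeta)\ne 0$ and expanding $R(e^{it})/e^{itd}$ in $t-t_0$ shows that $T$ vanishes to exactly order $m$ at $t_0$, so $m$ must be even — an odd $m$ would force $T$ to change sign, contradicting $T\ge 0$. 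This is case \ref{ResolventLemii}.

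For the reverse direction, case \ref{ResolventLemi} immediately yields $|f|=|g|$ on $\partial\DD$, so assume case \ref{ResolventLemii}. The real trigonometric polynomial $T(t)$ has all its zeros of even order, hence by continuity it does not change sign on $\partial\DD$ (at each zero it only touches $0$ and turns back), so $T$ has a constant sign $\pm 1$. Its mean is $r_d>0$, so the sign must be nonnegative, and $T\ge 0$ on $\partial\DD$, which is precisely $|f|\ge|g|$ on $\partial\DD$. The parenthetical claim that $\deg R$ is even under case \ref{ResolventLemii} follows from the Hermitian symmetry: the nonzero leading coefficient $r_D$ pairs with $r_{2d-D}\ne 0$, so $\ord R+\deg R=2d$, and the existence of the real positive middle coefficient $r_d$ places $d$ symmetrically between them.

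The main technical point to get right is the translation between the multiplicity of $\zeta\in\partial\DD$ as a zero of $R(z)$ (a polynomial in the complex variable $z$) and the order of vanishing of the real trigonometric polynomial $T(t)$ at $t_0$ with $\zeta=e^{it_0}$; once this is pinned down by a local Taylor expansion, the "no sign change" argument closes both directions cleanly.
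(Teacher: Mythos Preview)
Your proof is correct and follows essentially the same route as the paper's: both reduce the inequality to nonnegativity of the real trigonometric polynomial $T(t)=e^{-idt}R(e^{it})$, identify $r_d$ as its mean value via the Fourier/Parseval formula, and translate between the multiplicity of a unimodular zero of $R$ and the vanishing order of $T$ (the paper does this by repeated differentiation, you by a local Taylor expansion, which amounts to the same thing). The only extras in your write-up are the explicit Hermitian-symmetry remark and the justification of the parenthetical ``$\deg R$ is even'' claim, neither of which the paper spells out.
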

\begin{proof}
As Case \ref{ResolventLem} is trivial, we can assume $\abs{f(z)} \ne \abs{g(z)}$ for at least one point $z$ on the unit circle. Set $z=e^{it}$, $t \in [0, 2\pi)$ and let $H(t) = \abs{f(e^{it})}^2 - \abs{g(e^{it})}^2$.
For $|z|=1$,
\[
\abs{f(z)}^2 - \abs{g(z)}^2 = f(z)\overline{f}(1/z)-g(z)\overline{g}(1/z)=z^{-d}R(z).
\]
Therefore, $H(t) = e^{-idt}R(e^{it})$. Notice that
\begin{equation}\label{eqAvg}
r_{d} = \frac{1}{2\pi}\int_{0}^{2\pi}e^{-idt}R(e^{it})\,dt.
\end{equation}
If $|f(z)| \geq |g(z)|$ and $R(z) \not\equiv 0$, then $H(t) \geq 0$ and $H(t)$ is not identically zero, which yields $r_d > 0$.
By repeated differentiation, one finds
\begin{equation}\label{eqDerH}
H^{(s)}(t) = i^s e^{it(s-d)} R^{(s)}(e^{it}) + \dots\text{ lower derivatives } R^{(j)}(e^{it}),
\end{equation}
for $s=0, 1, \dots, \deg{R(z)}$.

As $\abs{f(z)} \geq \abs{g(z)}$ is equivalent to $H(t) \geq 0$, any zero $t_0 \in [0, 2\pi)$ of $H(t)$ must be the point of local minimum. This means
\begin{equation}\label{derHeq}
H(t_0) = H'(t_0)= \dots = H^{2l-1}(t_0)=0, H^{2l}(t_0) \ne 0,
\end{equation}
for some $l \in \N$. By \eqref{eqDerH}, equations in \eqref{derHeq}, for $z_0 = e^{it_0}$, are equivalent to
\begin{equation}\label{derReq}
R(z_0) = R'(z_0) = \dots = R^{2l-1}(z_0) = 0, R^{2l}(z_0) \ne 0,
\end{equation} 
so $z_0$ a unimodular zero of $R$ of even multiplicity. 

Conversely, assume that all zeros $z_j=e^{it_j}$, $t_j \in [0, 2\pi)$ of $R(z)$ are of even multiplicity $2l_j$, $1 \leq j \leq m$. As equations \eqref{derReq} lead back to equations \eqref{derHeq}, $t_j$ must be also the points of local extrema (minima or maxima) of $H(t)$. It follows from this that $H(t)$ must have the same sign for every other $t \ne t_j$. If $r_d > 0$, by the average value formula \eqref{eqAvg},  $H(t) \geq 0$.
\end{proof}

Next, we formalize the detection of unimodular zeros among polynomials $f(z)+z^ng(z)$. The periodic recurrence of cyclotomic factors that is mentioned in Proposition \ref{UnimodularProp} was already exploited many times to determine all reducible members in polynomial sequences of this type by Schinzel \cite{schi}, Smyth \cite{Smy2}, Filaseta and others \cite{FiFoKo, FilMat}. 

\begin{proposition}\label{UnimodularProp}
Let $\zeta \in \partial \DD$ be a zero of one of polynomials $ f(z)+z^ng(z)$, $n \in \N \cup \{0\}$. Then $\zeta$ is also the zero of resolvent polynomial $R(z)$ defined in Lemma \ref{ResolventLem}. Additionally, if  such $\zeta$ is a root of unity and $f(\zeta) \ne 0$, then the sequence $f(\zeta)+\zeta^ng(\zeta)$ is periodic, with period $T=\ord{\zeta}$. Conversely,  any common zero $\zeta \ne 0$ of two polynomials in the sequence $f(z)+z^ng(z)$, that is not a zero of $f(z)$, must be the zero of $R(z)$ and also a root of unity. In such a case, the set of all $n \in \N$ such that $\zeta$ is a zero of $f(z)+z^ng(z)$ is an arithmetic progression with difference equal to $\ord{\zeta}$.
\end{proposition}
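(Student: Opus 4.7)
The plan is to reduce each claim to the identity
\begin{equation*}
|f(z)|^2 - |g(z)|^2 = z^{-d} R(z), \qquad |z|=1,
\end{equation*}
already derived in the proof of Lemma \ref{ResolventLem}, together with simple algebraic manipulation of the defining relation $f(\zeta)+\zeta^n g(\zeta)=0$. The proposition really bundles four related assertions, and all of them follow by pinning down $\zeta^n g(\zeta) = -f(\zeta)$ and exploiting the multiplicative rigidity of $\zeta$.

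First I handle the direct claims. If $\zeta \in \partial\DD$ satisfies $f(\zeta)+\zeta^n g(\zeta)=0$, then $f(\zeta)=-\zeta^n g(\zeta)$ combined with $|\zeta|=1$ gives $|f(\zeta)|=|g(\zeta)|$, and the displayed identity then forces $R(\zeta)=0$. For the periodicity statement, if $\zeta$ is a root of unity with $\ord{\zeta}=T$, then $\zeta^{n+T}=\zeta^n$, so the sequence $n \mapsto f(\zeta)+\zeta^n g(\zeta)$ trivially admits $T$ as a period; the hypothesis $f(\zeta)\neq 0$ is used only to exclude the degenerate case where $f(\zeta)=g(\zeta)=0$ and the sequence is identically zero.

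For the converse, suppose $\zeta \neq 0$ is a common zero of $f(z)+z^{n_1}g(z)$ and $f(z)+z^{n_2}g(z)$ with $n_1<n_2$ and $f(\zeta)\neq 0$. Subtracting the two vanishing relations gives
\begin{equation*}
\bigl(\zeta^{n_2}-\zeta^{n_1}\bigr)\,g(\zeta) = 0.
\end{equation*}
Now $g(\zeta)=0$ together with $f(\zeta)+\zeta^{n_1}g(\zeta)=0$ would force $f(\zeta)=0$, contradicting the hypothesis; so $\zeta^{n_2-n_1}=1$, and $\zeta$ is a root of unity, in particular $\zeta\in\partial\DD$, whence $R(\zeta)=0$ by the forward direction. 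Finally, if $\ord{\zeta}=T$ and $\zeta$ is a zero of some $f(z)+z^{n_0}g(z)$, then the same argument shows $g(\zeta)\neq 0$ and $f(\zeta)=-\zeta^{n_0}g(\zeta)$, so
\begin{equation*}
f(\zeta)+\zeta^n g(\zeta) \;=\; g(\zeta)\bigl(\zeta^n - \zeta^{n_0}\bigr),
\end{equation*}
which vanishes precisely when $T \mid n-n_0$; hence the set of such $n$ is the arithmetic progression $\{n\in\N : n\equiv n_0 \pmod T\}$, with common difference $T=\ord{\zeta}$.

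The argument is entirely elementary, so the only potential obstacle is bookkeeping the role of the standing hypothesis $f(\zeta)\neq 0$: it is precisely what rules out the pathological case where $f$ and $g$ share a unimodular zero and lets us convert the system of two zero conditions into a clean cyclotomic equation for $\zeta$.
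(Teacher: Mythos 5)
Your proof is correct and follows essentially the same line of reasoning as the paper's: deduce $|f(\zeta)|=|g(\zeta)|$ to get $R(\zeta)=0$, subtract the two vanishing relations and factor out $g(\zeta)$, then use $f(\zeta)\neq0$ to force $g(\zeta)\neq0$ and hence $\zeta^{n_2-n_1}=1$. The only minor imprecision is in your aside about the role of $f(\zeta)\neq0$: what it actually buys (together with the vanishing relation and $\zeta\neq0$) is $g(\zeta)\neq0$, which is what guarantees the period is exactly $\ord{\zeta}$ rather than $1$.
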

\begin{proof}
The equality $f(\zeta)=-\zeta^ng(\zeta)$ means $\abs{f(\zeta)}=\abs{g(\zeta)}$, so $R(\zeta)=0$. Now, if $f(\zeta)+\zeta^{n_1}g(\zeta)=0$ and $f(\zeta)+\zeta^{n_2}g(\zeta)=0$, $n_2 > n_1$, with $f(\zeta) \ne 0$, then $g(\zeta) \ne 0$ and $\zeta^{n_1}=\zeta^{n_2}$. Hence, $\zeta$ is a root of unity with $\ord{\zeta} \mid (n_2 - n_1)$. The periodicity becomes obvious.
\end{proof}

For the automatic detection of singular exponents $n \in \N$, for which $f(z)+z^ng(z)$ contain repeated unimodular zeros, and the calculation of the value $n_0$ in Boyd's formula \ref{propBoyd}, we use the following result.

\begin{lemma} \label{MultiplicityLem} Suppose that $f(z)$, $g(z)$ and $R(z)$ are as in Lemma \ref{ResolventLem} and $R(z) \not\equiv 0$ in $\C[z]$. Define the resultant $S(z):=S[f, g](z)$ by
\begin{equation}\label{eqS}
S(z) := \mathrm{Res}_{w}\left(R(w), w(f'(w)g(w) - f(w)g'(w)) - zf(w)g(w)\right),
\end{equation}
where the resultant is computed in $\C[z, w]$ with respect to $w$.

The critical exponent value $n = n_0$ in equation \ref{eq_n0} of Proposition \ref{propBoyd} satisfies $n_0 = \Re{(\xi)}$, where $\xi$ is one of the zeros of $S(z)$.

If $h(z)=f(z)+z^ng(z)$ has a repeated zero at $z=\zeta \in \partial \DD$, $f(\zeta) \ne 0$, for $n=n_0$, then the critical exponent $n_0 \in \N \cup \{0\}$ must be the zero of $S(z)$, and $n_0 = \left|\left(f(\zeta)/g(\zeta)\right) ' \right|$.
\end{lemma}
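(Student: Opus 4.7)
The plan is to recognize that, once $\zeta \in \partial\DD$ is a common unimodular root candidate, the quantity $\xi := \zeta f'(\zeta)/f(\zeta) - \zeta g'(\zeta)/g(\zeta)$ that appears in equation \eqref{eq_n0} is precisely the value of $z$ for which $w=\zeta$ is a common root of the two polynomials whose $w$--resultant defines $S(z)$. From this, the resultant property $S(\xi)=0$ will follow automatically.

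First, I would observe that any $\zeta$ contributing to the maximum in \eqref{eq_n0} is a zero of $h_n(z)=f(z)+z^n g(z)$ on $\partial \DD$, hence by Proposition \ref{UnimodularProp} a zero of $R(w)$; also $g(\zeta)\ne 0$, since $f(\zeta)=-\zeta^n g(\zeta)$ and $f(\zeta)\ne 0$. Clearing denominators in the definition of $\xi$ gives
\[
\xi\, f(\zeta) g(\zeta) = \zeta\bigl(f'(\zeta) g(\zeta) - f(\zeta) g'(\zeta)\bigr),
\]
which is exactly the statement that $w=\zeta$ is a common zero of the two polynomials
\[
R(w), \qquad w\bigl(f'(w)g(w)-f(w)g'(w)\bigr)-\xi f(w)g(w).
\]
The standard resultant property then yields $S(\xi)=0$. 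Maximizing $\Re(\xi)$ over the finite set of unimodular zeros of $R$ that correspond to actual zeros of some $h_n$ gives the first statement: $n_0=\Re(\xi)$ for some zero $\xi$ of $S(z)$.

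For the second part, assume that $h_n$ has a repeated zero at $\zeta\in\partial\DD$ with $f(\zeta)\ne 0$, and write out $h_n(\zeta)=0$ and $\zeta h_n'(\zeta)=0$:
\[
f(\zeta)+\zeta^n g(\zeta)=0,\qquad \zeta f'(\zeta)+n\zeta^n g(\zeta)+\zeta^{n+1}g'(\zeta)=0.
\]
Substituting $\zeta^n g(\zeta)=-f(\zeta)$ from the first into the second and dividing by $f(\zeta)$ yields $n = \zeta f'(\zeta)/f(\zeta)-\zeta g'(\zeta)/g(\zeta)=\xi$, so in this special situation $\xi$ is already a real (integer) number and $n=n_0\in \N\cup\{0\}$ is itself a zero of $S(z)$ by the first part. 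Finally, for the closed-form $n_0=|(f(\zeta)/g(\zeta))'|$, I would apply the quotient rule to get
\[
\left.\bigl(f/g\bigr)'\right|_{z=\zeta}=\frac{f'(\zeta)g(\zeta)-f(\zeta)g'(\zeta)}{g(\zeta)^2}
\]
and combine it with the identity $n_0 f(\zeta)g(\zeta)=\zeta\bigl(f'(\zeta)g(\zeta)-f(\zeta)g'(\zeta)\bigr)$ and $f(\zeta)/g(\zeta)=-\zeta^{n_0}$ to obtain $(f/g)'(\zeta)=-n_0\zeta^{n_0-1}$; taking absolute values and using $|\zeta|=1$ finishes the argument.

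There is no real analytic obstacle here; the only technical care needed is in handling the resultant step, namely ensuring $R(w)\not\equiv 0$ (already assumed in the hypotheses) and $g(\zeta)\ne 0$ (which follows from $f(\zeta)\ne 0$), so that clearing denominators in the expression for $\xi$ is legitimate and the common-root characterization of the resultant applies.
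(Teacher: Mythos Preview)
Your proposal is correct and follows essentially the same route as the paper's proof: both arguments identify $\xi=\zeta f'(\zeta)/f(\zeta)-\zeta g'(\zeta)/g(\zeta)$ as the value making $w=\zeta$ a common root of $R(w)$ and $w(f'g-fg')-zfg$, then use the vanishing of the resultant, and for the repeated-root case both differentiate $h_n$, eliminate $\zeta^n$ via $\zeta^n g(\zeta)=-f(\zeta)$, and finish by dividing through by $g(\zeta)^2$ together with $|f(\zeta)|=|g(\zeta)|$. Your version is in fact slightly more explicit than the paper's (you spell out $(f/g)'(\zeta)=-n_0\zeta^{n_0-1}$ and justify $g(\zeta)\ne 0$), but there is no substantive difference in strategy.
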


\begin{proof}
Observe that the zeros of $S(z)$ are $\xi=\zeta f'(\zeta)/f(\zeta) - \zeta g'(\zeta)/g(\zeta)$, where $\zeta$ runs over all zeros of $R(z)$. This, in particular, allows another way to compute $n_0$ in equation \eqref{eq_n0}.

Assume that $\zeta$ is a multiple root of $h(z)$. Then, from, $h'(\zeta)=0$,  one obtains
\begin{equation}\label{eqMult}
f'(\zeta)+\zeta^ng'(\zeta) + n\zeta^{n-1}g(\zeta)=0.
\end{equation}
After multiplying both sides by $\zeta g(\zeta) \ne 0$, it is possible to eliminate all occurrences of $\zeta^n$ by using $\zeta^ng(\zeta)=-f(\zeta)$:
\begin{equation}\label{eq_zeta}
\zeta f'(\zeta) g(\zeta) - \zeta f(\zeta)g'(\zeta) - nf(\zeta)g(\zeta)=0.
\end{equation}
Hence, if $\zeta$ is a repeated unimodular root of $h(z)$, then one of the roots of $S(z)$ is $\xi=n_0$ by equation \eqref{eq_zeta}. For the last statement, divide equation \eqref{eq_zeta} by $g(\zeta)^2$ 
\begin{equation}\label{eq_derivs}
\zeta\left(\frac{f(\zeta)}{g(\zeta)}\right)' - n\left(\frac{f(\zeta)}{g(\zeta)}\right)=0,
\end{equation}
and use $\abs{f(\zeta)}=\abs{g(\zeta)}$, since $\zeta$ is unimodular.
\end{proof}

In principle, Lemma \ref{MultiplicityLem} allows us to find (or estimate) singular exponents $n=n_0$ without using any floating point arithmetic. 

\begin{figure}
\subcaptionbox{$u(t) \geq v(t)$\label{graph_uv}}{\includegraphics[width=0.49\linewidth]{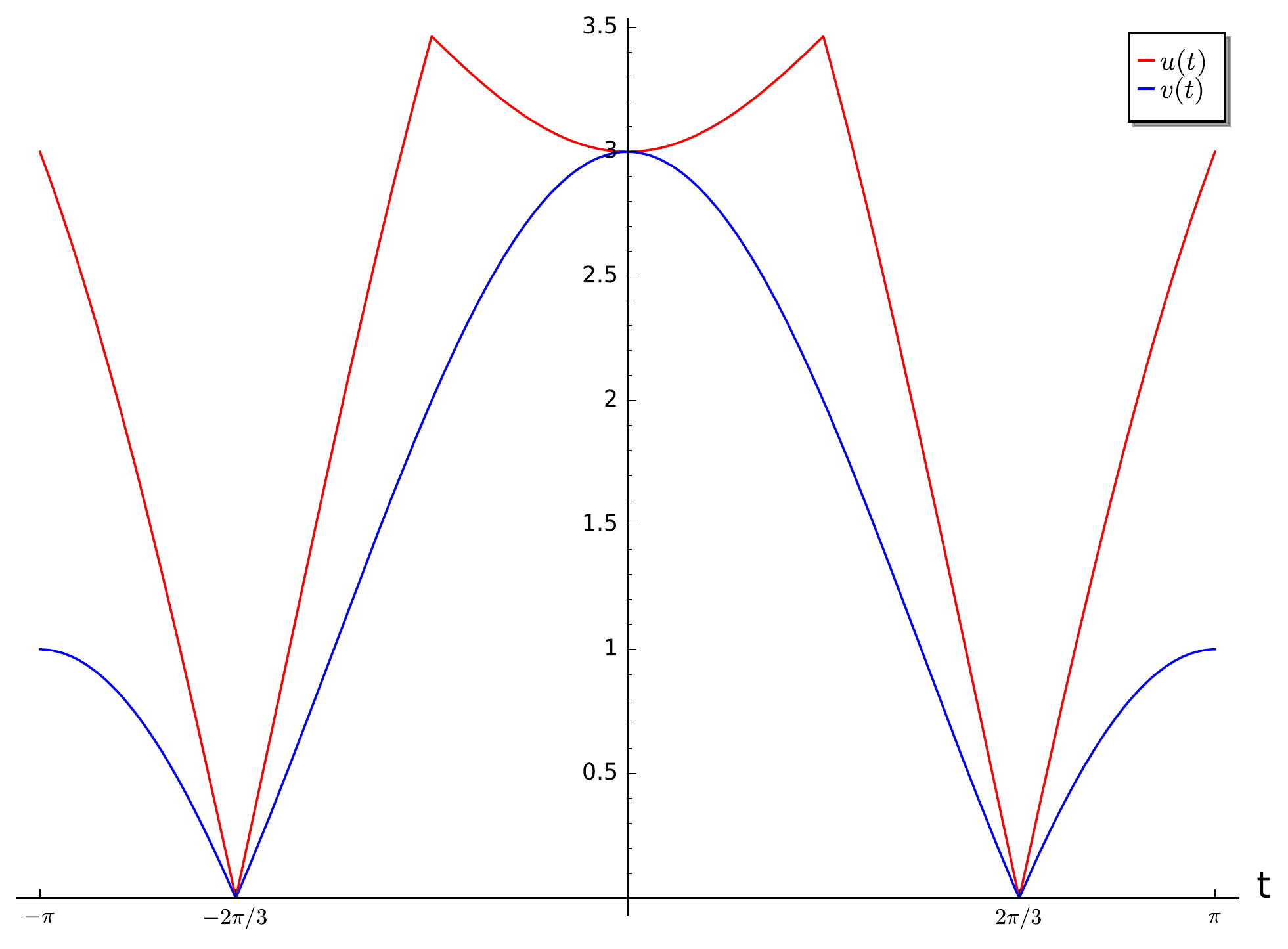}}
\subcaptionbox{$\abs{\cos{(3t/2)}}$ and $4\cos^2{(t/2)}$\label{graph_cos}}{\includegraphics[width=0.49\linewidth]{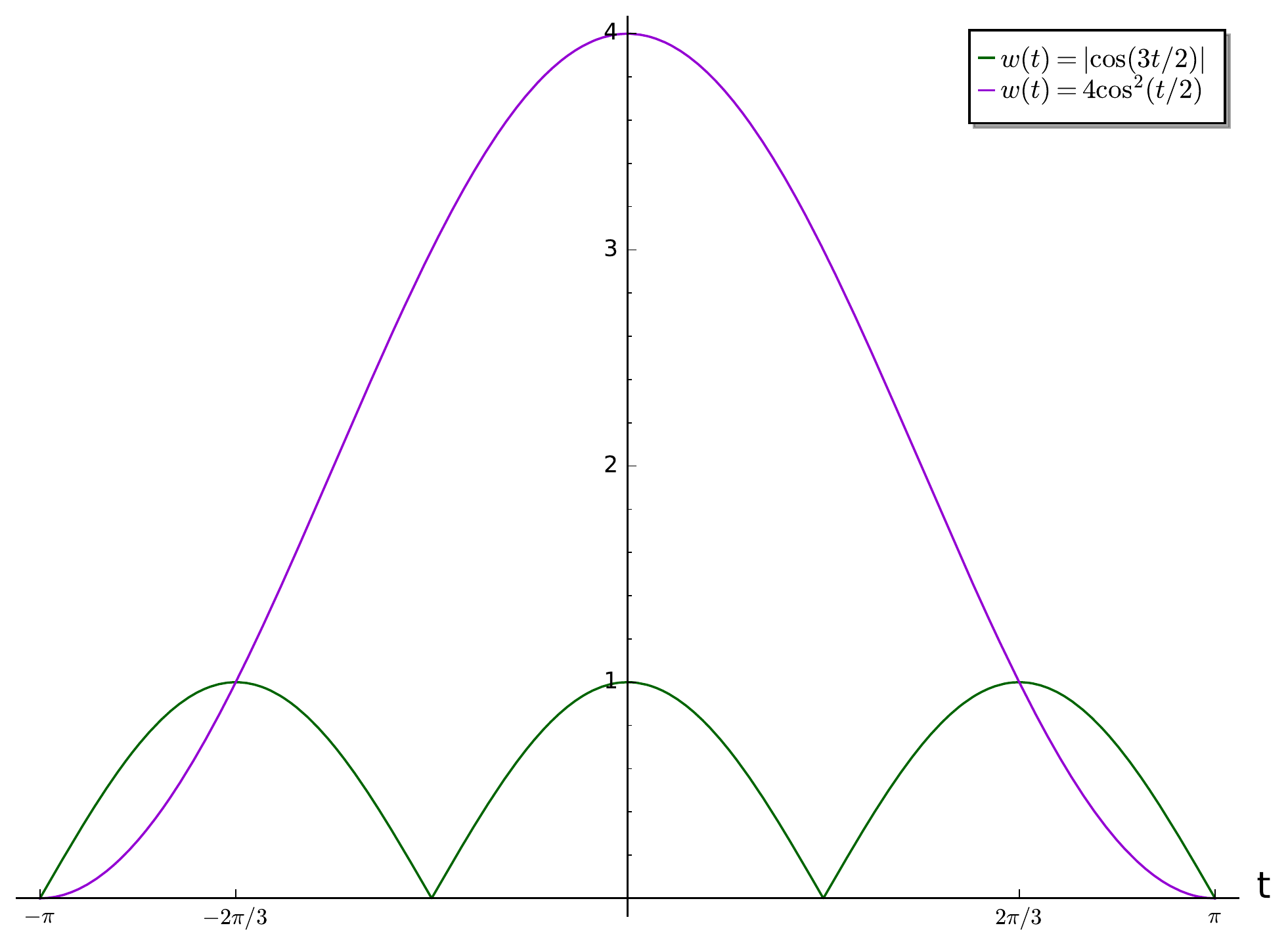}}
\vfill
\caption{Auxiliary functions used in the proofs of lemmas \ref{lemPrep1} and \ref{lemPrep2}}\label{fig_plots}
\end{figure}

We conclude this Section with two preparatory lemmas for Theorem \ref{thmSPL}.

\begin{lemma}\label{lemPrep1}
For $m, l \in \N$, let $f(z)=1 - z + z^2 - 2z^{2+3m}(1+z)$, $g(z)=1+z+z^2$. Then $\abs{f(z)} \geq \abs{g(z)}$ for $z \in \partial\DD$, with equality attained only at $z=1$, $z=e^{\pm 2\pi i/3}$. In particular, $e^{\pm 2\pi i/3}$ are the only simple unimodular zeros of $f(z)$.
\end{lemma}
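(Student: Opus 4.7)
The plan is to parametrize $z=e^{it}$ on the unit circle and turn the inequality $|f(z)|\ge|g(z)|$ into an elementary trigonometric one that does not depend on $m$ in any essential way. The key algebraic observation is the identity $(1-z+z^2)(1+z)=1+z^3$; using $\bar z=z^{-1}$ on $\partial\DD$, this collapses the cross term that arises when expanding $|f|^2=|1-z+z^2 - 2z^{2+3m}(1+z)|^2$ into just two pure cosines. After sum-to-product these combine into a single product of cosines, making the inequality trivial.

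Concretely, first compute $|g(e^{it})|^2=(1+2\cos t)^2$, $|1-z+z^2|^2=(2\cos t-1)^2$, and $4|1+z|^2=8(1+\cos t)$. For the cross term in $|f|^2$, expand
\[
(1-z+z^2)\,\overline{2z^{2+3m}(1+z)} = 2z^{-2-3m}(1-z+z^2)(1+z^{-1}) = 2(z^{-3-3m}+z^{-3m}),
\]
using the identity mentioned above. Taking real parts and applying the sum-to-product formula yields
\[
|f(e^{it})|^2 - |g(e^{it})|^2 = 8\Bigl(1 - \cos\bigl((3/2+3m)t\bigr)\cos(3t/2)\Bigr),
\]
after the $\pm 8\cos t$ contributions cancel. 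This is manifestly $\ge 0$, proving the inequality for every $m\in\N$.

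For the equality analysis, $\cos A\cos B=1$ forces both cosines to be $+1$ or both to be $-1$. Solving $\cos(3t/2)=\pm 1$ on $[0,2\pi)$ pins $t$ down to $0$, $2\pi/3$, $4\pi/3$; at each of these, the integrality of $(3+6m)/3$ makes the other cosine automatically match sign. These three points are exactly $z=1,\; e^{\pm 2\pi i/3}$, so equality holds precisely there. Since $|f|\ge|g|$ on $\partial\DD$, any unimodular zero of $f$ must also be a zero of $g$; the only zeros of $g$ on $\partial\DD$ are $e^{\pm 2\pi i/3}$, and $f(1)=-3\ne 0$ rules $z=1$ out. Finally, simplicity at $\omega:=e^{2\pi i/3}$ follows from a direct calculation: using $\omega^3=1$ and $1+\omega=-\omega^2$,
\[
f'(\omega)=-1+2\omega -2(2+3m)\omega(1+\omega)-2\omega^2 = 3+6m + 2(\omega-\omega^2)=3+6m+2i\sqrt 3,
\]
which is non-zero; the conjugate computation handles $\bar\omega$. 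The only step requiring care is bookkeeping in the cross-term expansion and checking that the sign-matching at the three equality points works for all $m$, but both are immediate once the identity $(1-z+z^2)(1+z)=1+z^3$ is in play.
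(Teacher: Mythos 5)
Your proof is correct, and it takes a genuinely different route from the paper's. The paper proceeds by the reverse triangle inequality: it bounds $\abs{f(e^{it})}$ from below by the $m$--independent function $u(t):=\bigl|\,|2\cos t-1|-4\cos(t/2)\,\bigr|$ and then argues that $u(t)\geq v(t):=\abs{g(e^{it})}=|1+2\cos t|$ ``by direct calculation'' with reference to a figure. You instead compute $\abs{f}^2-\abs{g}^2$ exactly: the identity $(1-z+z^2)(1+z)=1+z^3$ (equivalently $(1+z^{-1})(1-z+z^2)=z^2+z^{-1}$ on $\partial\DD$) collapses the cross term into two pure cosines, and after sum--to--product you get the clean closed form
\[
\abs{f(e^{it})}^2-\abs{g(e^{it})}^2 \;=\; 8\Bigl(1-\cos\bigl(\tfrac{3}{2}(1+2m)t\bigr)\cos\bigl(\tfrac{3t}{2}\bigr)\Bigr)\;\geq 0.
\]
This buys you rigor (no appeal to a picture) and immediately exposes the equality locus; the only slight imprecision in your write-up is the phrase ``integrality of $(3+6m)/3$'' --- what you actually need, and what makes the sign-matching work at $t=2k\pi/3$, is that $1+2m$ is an \emph{odd} integer, so that $\cos((1+2m)k\pi)=(-1)^k=\cos(k\pi)$. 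The paper's approach is lossier but requires no cross-term bookkeeping. Both proofs finish by checking nonvanishing of $f'$ at $e^{\pm 2\pi i/3}$; your value $f'(\omega)=3+6m+2\sqrt{3}\,i$ is correct (it checks out at $m=0$, where $f'(z)=-1-2z-6z^2$ gives $f'(\omega)=3+2\sqrt 3\,i$), and differs from the expression $2\mp\sqrt 3(1+2m)$ printed in the paper, which appears to contain a slip --- though either way the nonvanishing conclusion holds, so the lemma stands.
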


\begin{proof}
For $z=e^{it}$, $t \in [-\pi, \pi)$, one has
\[
\abs{1 \pm z+z^2}=\abs{2\cos{t} \pm 1}, \qquad \abs{1+z}=2\cos{t/2}.
\]
Notice that
\[
\abs{f(e^{it})} \geq \abs{\abs{1-e^{it}+e^{2it}} - 2\abs{1+e^{it}}} = \abs{\abs{2\cos{t} -1}-4\cos{(t/2)}} := u(t).
\]
Set
\[
v(t) := \abs{g(e^{it})}=\abs{1+e^{it}+e^{2it}}=\abs{1+2\cos{t}}.
\]
By direct calculation, $u(t) \geq v(t)$, with equality at $t=0$, $\pm2\pi/3$, (see Figure \ref{graph_uv} for clarification). It follows that zeros of $g(z)$ are the only possibilities for $f(z)=0$ in $\partial\DD$. All it remains is to check that $f(\zeta)=0$, $f'(\zeta)=2 \mp \sqrt{3}(1+2m) \ne 0$, for $\zeta=e^{\pm 2\pi i/3}$.
\end{proof}

\begin{lemma}\label{lemPrep2}
Let $f(z)=1 - z + z^2 - 2z^{2+3m}(1+z)$, $m \in \N$. Then $U(f)=2$ and $N(f)=2m+1$.
\end{lemma}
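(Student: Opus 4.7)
The claim $U(f) = 2$ will follow immediately from Lemma \ref{lemPrep1}, which identifies the only unimodular zeros of $f$ as the two simple zeros at $e^{\pm 2\pi i/3}$. The bulk of the work is the assertion $N(f) = 2m+1$, which I plan to prove by induction on $m$.

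For the base case $m = 0$, direct factorization gives
\[
f_0(z) = (1-z+z^2) - 2z^2(1+z) = 1 - z - z^2 - 2z^3 = (1+z+z^2)(1-2z),
\]
so $N(f_0) = N(1-2z) = 1 = 2\cdot 0 + 1$.

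For the inductive step I will write $f_m = (1+z+z^2)\,q_m$ with $\deg q_m = 3m+1$; since $1+z+z^2$ has no zeros strictly inside $\DD$, we have $N(q_m) = N(f_m)$, and Lemma \ref{lemPrep1} yields $|q_m(z)| \geq 1$ on $\partial \DD$ (strict except at $z=1$), so that $q_m$ is zero-free on the unit circle. A short calculation gives
\[
f_{m+1}(z) - f_m(z) = 2z^{2+3m}(1+z)(1-z)(1+z+z^2),
\]
from which one extracts the clean polynomial recursion $q_{m+1}(z) = q_m(z) + 2z^{2+3m}(1-z^2)$. The inductive step then reduces to showing $N(q_{m+1}) = N(q_m) + 2$: as the degree jumps from $3m+1$ to $3m+4$, three new roots emerge from infinity, and exactly two should land inside $\DD$ by the time $t=1$ in the homotopy below.

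To count these, my plan is to follow the family $Q(z,t) = q_m(z) + 2t\, z^{2+3m}(1-z^2)$ for $t \in [0,1]$ and track net inward crossings of $\partial \DD$ by its zeros. A crossing at $(z,t) \in \partial\DD \times (0,1]$ requires $t = -q_m(z)/\bigl(2z^{2+3m}(1-z^2)\bigr) \in (0,1]$, a rigid constraint that pins down finitely many candidate $z$. The principal obstacle will be that Proposition \ref{propBoyd} does not apply directly in its stated form, since the hypothesis $|q_m(z)| \geq |2(1-z^2)|$ on $\partial \DD$ fails on a substantial arc (for instance $|q_m(i)| \leq \sqrt{13} < 4$), so no uniform Rouch\'e comparison is available. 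I anticipate closing the argument either by an arc-by-arc winding analysis exploiting the explicit expansion
\[
q_m(z) = 1 - 2\sum_{k=0}^{m} z^{3k+1} + 2\sum_{k=0}^{m-1} z^{3k+2},
\]
or by a direct argument-principle computation on the factorization
\[
f(e^{i\theta}) = e^{i\theta}\bigl[(2\cos\theta - 1) - 4\cos(\theta/2)\, e^{i(3/2+3m)\theta}\bigr],
\]
tracking the winding of the bracketed factor around the origin as $\theta$ traverses $[0,2\pi]$ and accounting for the two inward indentations needed at $\theta = \pm 2\pi/3$; both routes should deliver the required increment of $2$.
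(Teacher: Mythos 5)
Your framing is sensible and the reductions are correct: the base case $m=0$ factorization, the identity $q_{m+1}(z)=q_m(z)+2z^{2+3m}(1-z^2)$, the fact that $N(q_m)=N(f_m)$ because $1+z+z^2$ has no zeros in $\DD$, and the observation that $q_m$ is zero--free on $\partial\DD$ (via Lemma \ref{lemPrep1} and $q_m(1)=-1$). You also correctly diagnose why Proposition \ref{propBoyd} cannot be invoked directly: $|q_m|$ is only bounded below by $1$ on $\partial\DD$ while $|2(1-z^2)|$ reaches $4$, so the hypothesis $|f|\geq|g|$ of Boyd's formula is violated, and no uniform Rouch\'e comparison is available across the whole circle.

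The problem is that the crux of the lemma --- establishing $N(q_{m+1})=N(q_m)+2$ --- is never actually proved. You reduce it to tracking crossings along the homotopy $Q(z,t)=q_m(z)+2t\,z^{2+3m}(1-z^2)$ and then write ``I anticipate closing the argument either by an arc-by-arc winding analysis \ldots\ or by a direct argument-principle computation \ldots; both routes should deliver the required increment of $2$.'' That sentence is a promise, not a proof. There are concrete difficulties in both suggested routes. For the homotopy route, three roots appear from infinity as $t$ leaves $0$, and you must show that exactly two of them end up inside $\DD$ at $t=1$ while proving that no root of $q_m$ escapes across $\partial\DD$; the constraint $t=-q_m(z)/(2z^{2+3m}(1-z^2))\in(0,1]$ pins down candidate crossing points only implicitly, and determining the \emph{direction} (inward vs.\ outward) of each crossing requires exactly the kind of sign/derivative bookkeeping that you have not carried out. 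For the argument-principle route, the factorization $f(e^{i\theta})=e^{i\theta}\bigl[(2\cos\theta-1)-4\cos(\theta/2)e^{i(3/2+3m)\theta}\bigr]$ is correct, but turning it into a winding count is not a short step: the bracket touches the origin at $\theta=0,\pm 2\pi/3$ (the unimodular zeros of $f$ after cancellation), and you must account for the indentations there, the $m$--dependent rate of rotation of the exponential, and the sign changes of $2\cos\theta-1$ and $\cos(\theta/2)$ across the circle. The paper's own proof, which is essentially your ``second route'' carried out non-inductively, devotes an entire careful Cauchy-index computation to classifying the singularities of $\Phi(t)=\Im(iq(e^{it}))/\Re(iq(e^{it}))$ on $[-\pi,\pi]$, including a removable singularity at $2\pi/3$, a sign analysis on $(0,2\pi/3)$ versus $(2\pi/3,\pi)$, and a separate treatment of the endpoint $\theta=\pi$ depending on the parity of $m$. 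Nothing in your sketch substitutes for that work. As it stands the proposal is a correct reduction plus a plan, but the main inequality is unproved.

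As an aside, the inductive reformulation probably does not simplify matters: the increment argument you need is, modulo bookkeeping, the same winding-number computation as the direct one, applied to a polynomial of one lower degree. You would still face the full sign/singularity analysis, just repackaged as ``exactly two inward crossings per step.'' Also make sure your base case matches the paper's range of $m$: Theorem \ref{thmSPL} invokes the lemma for $m\geq 1$, so if $\N$ is read to start at $1$ you should verify $m=1$ directly rather than $m=0$ (either is fine once the induction is in place, but it is worth being explicit).
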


\begin{proof}
Note that the formula $U(f)=2$ has been already established in Lemma \ref{lemPrep1}. Let
\[
q(z) := \frac{f(z)}{1+z+z^2} = \frac{1+z^3 - 2z^{2+3m}(1+z)^2}{(1+z)(1+z+z^2)}.
\]
Note that $q(z) \ne 0$ for $z \in \partial\DD$ by Lemma \ref{lemPrep1}. For $z=e^{it}$, $t \in \R$,
\[
q(e^{it}) = \frac{\cos(3t/2)- 4e^{(3/2+3m)it}\cos^2{(t/2)}}{\cos{(t/2)}(2\cos{t}+1)}.
\]
After separating real and imaginary parts, one obtains:
\[
\Re{q(e^{it})} = \frac{\cos(3t/2)- 4\cos^2{(t/2)}\cos{(3/2+3m)t}}{\cos{(t/2)}(2\cos{t}+1)},
\] and
\[
\Im{q(e^{it})} = \frac{- 4\cos^2{(t/2)}\sin{(3/2+3m)t}}{\cos{(t/2)}(2\cos{t}+1)}.
\]
Since $U(q)=0$, $\Re{q(e^{it})}$ and $\Re{q(e^{it})}$ cannot vanish simultaneously. Set
\begin{equation}\label{eq_var1}
\Phi(t) := \frac{\Im{\left(iq(e^{it})\right)}}{\Re{\left(iq(e^{it})\right)}}=\frac{\cos(3t/2)- 4\cos^2{(t/2)}\cos{(3/2+3m)t}}{4\cos^2{(t/2)}\sin{(3/2+3m)t}}.
\end{equation}

Restricting $t$ to the interval $t \in [-\pi, \pi]$, $\Phi(t)$ in equation \ref{eq_var1} has singularities at points
\[
t_j := \frac{2\pi j}{3+6m},  \qquad -1-3m \leq j \leq  1+3m,
\]
of order at most $1$ and singularities at the endpoints $t= \pm \pi$ of order at most $2$. For $t \in (t_j, t_{j+1})$ and also in the interval $(t_{3m+1}, \pi)$, the sign of the denominator of $\Phi(t)$ always coincides with the sign of
\[
\mathrm{sgn}\left(\sin{(1/2+3m)t}\right) =(-1)^j.
\]
At each endpoint  $t=t_j$, in the numerator of equation \eqref{eq_var1}, one has
\[
\cos{(3/2+3m)t_j}=(-1)^j.
\]
For $t \in (0, 2\pi/3)$, $\abs{\cos{(3t/2)}} < 4\cos^2{t/2}$, see Figure \ref{graph_cos}. Therefore, the numerator of $\Phi(t)$ has sign $(-1)^{j+1}$ at $t_j$, for $0 \leq j \leq 2m$.

The point $s \in \R$ is the singularity of  \emph{a first kind}, if $\Phi(t)$ jumps from $-\infty$ to $+\infty$ at $t=s$. Likewise, $s$ is a singularity of \emph{a second kind}, if $\Phi(t)$ jumps from $+\infty$ to $-\infty$ at $t=s$.

From the sign information one deduces that $\Phi(t)$ jumps from $+\infty$ to $-\infty$ at $t=t_j$, for $j=0, \dots, 2m$.

At $t_{2m+1}=2\pi/3$, laborious calculations yield
\[
\Phi(2\pi/3) = 0/0 = \lim_{t \to 2\pi/3} \Phi(t) =
-\left.\frac{d\Re{\left(q(e^{it})\right)/dt}}{d\Im{\left(q(e^{it})\right)/dt}}\right|_{t=2\pi/3}=  \frac{2\sqrt{3}}{3+6m} \ne 0,
\]
therefore $t=2\pi/3$ is a removable singularity that does not need to be accounted for.

For $t \in (2\pi/3, \pi)$, $\abs{\cos{(3t/2)}} > 4\cos^2{t/2}$, see Figure \ref{graph_cos}. This means that the sign of the numerator of Eq. \eqref{eq_var1} is equal to that of $\cos{(3t/2)}$, that is, negative, for $t > 2\pi/3$.

Hence, $\Phi(t) \to +\infty$ as $t \uparrow t_{2m+2}$. Further to the left, as the numerator stays negative for $2m+2 \leq j \leq 3m+1$, $\Phi(t)$ at $t=t_j$ jumps from $+\infty$ to $-\infty$ if $j$ is even, and from  $-\infty$ to $+\infty$ if $j$ odd.

At $t=\pi$, $\sin(3/2+3m)\pi=(-1)^{m+1}$, $\cos(3/2+3m)\pi=0$. This leads to
\[
\lim_{t \uparrow \pi} \Phi(t) = (-1)^{m+1} \lim_{t \uparrow \pi} \frac{\cos{(3t/2)}}{\cos^2{(t/2)}} = (-1)^{m+1} \lim_{t \uparrow \pi} \frac{3\sin{(3t/2)}}{2\sin{(t/2)}\cos{(t/2)}}
\]
\[
=(-1)^{m+2}\cdot \frac{3}{2}\lim_{t \uparrow \pi} \frac{1}{\cos{(t/2)}}=\begin{cases}
																					+\infty, & \text{if } m \text{ is even}\\
																					- \infty, & \text{if } m \text{ is odd}\\																				\end{cases}.
\]
A similar calculation yields
\[
\lim_{t \downarrow \pi} \Phi(t)  =\begin{cases}
										-\infty, & \text{if } m \text{ is even}\\
										+\infty, & \text{if } m \text{ is odd}\\
										\end{cases}.
\] 
Therefore, $\Phi(t)$ at $t=\pi$ jumps from $+\infty$ to $-\infty$ if $m$ is even, from $-\infty$ to $+\infty$ if $m$ is odd.

The number of singularities of the first kind in the interval $(0, \pi)$ is
\[
N_{0}^{\pi}(\Phi)^+  = \begin{cases}
										m/2,		& \text{ if } m \text{ is even},\\
										(m-1)/2,	& \text{ if } m \text{ is odd},\\
							\end{cases} 
\]
while the total number of singularities of the second kind is
\[
N_{0}^{\pi}(\Phi)^-  = \begin{cases}
										2m + m/2,	   & \text{ if } m \text{ is even},\\
										2m + (m+1)/2, & \text{ if } m \text{ is odd}.\\
							\end{cases}
\]
Since $\Phi(t)$ is an odd function in $[-\pi, \pi]$, the mapping $\Phi(t) \mapsto -\Phi(-t)$  is a bijection from singularities in $(-\pi, 0)$ to the singularities $(0, \pi)$ that preserves the kind of singularities. One must also account for a singularity at $t=0$ of the second kind, and a singularity at $t=\pi$ that is of the second kind if $m$ even, and of the first kind if $m$ odd. Choosing $\eps > 0$, such that $\Phi(-\pi+\eps)=\Phi(\pi + \eps) \not\in \{0, \pm \infty\}$, from $2\pi$-periodicity of $\Phi(t)$ one obtains:
\[
N_{-\pi+\eps}^{\pi+\eps}(\Phi)^+ = m, \qquad N_{-\pi+\eps}^{\pi+\eps}(\Phi)^- =2+5m.
\]
The Cauchy index of $\Phi$ in the interval $[-\pi+\eps, \pi+\eps]$ is
\[
I_{-\pi+\eps}^{\pi+\eps}(\Phi) = N_{-\pi+\eps}^{\pi+\eps}(\Phi)^+ - N_{-\pi+\eps}^{\pi+\eps}(\Phi)^- = -2 - 4m.
\]
By Cauchy Index Theorem (Theorem 11.1.3 on p.358 in \cite{RahSch}), the number of zeros of $iq(z)$ inside $\DD$ is
\[
N(iq(z)) = -I_{-\pi+\eps}^{\pi+\eps}(\Phi)/2 = 2m+1.
\]
Obviously, $N(f)=N(iq)=2m+1$.
\end{proof}

\section{Computations}\label{sec_comp}

\subsection{Large Newman polynomials}\label{sec_largeMin}

Several authors  looked at $\{0, 1\}$--polynomials that stay large (in absolute value) on $\partial\DD$. Smyth \cite{Smy1} proved that the largest minimum on the unit circle, taken over all Newman polynomials of a given degree, $\mu(n) := \max_{f \in \NN_n} \min_{z \in \partial\DD}\abs{f(z)}$, tends to $\infty$ with $n$. Boyd \cite{B6} strengthened this by proving $\mu(n) \gg n^{0.137}$ and tabulated $\mu(n)$ for $1 \leq n \leq 21$ thereby extending previous computations done by Campbell, Ferguson and Forcade \cite{CFF}. Boyd also found some examples of Newman polynomials of large degree and large minima: for instance, in Appendix A of \cite{B6} he listed $f \in \NN_{77}$ with $\abs{f(z)} > 1.47$. In \cite{B6}, Section $6$, Part $3$ he noted that it would be of interest to find the smallest integer $n_0$ for which there exists a $f(z) \in \NN_{n_0}$ with $\abs{f(z)} \geq 2$ and gave an estimate $n_0 \leq 272$. More recently Mercer posed variants of Boyd's question in \cite{Mer2, Mer3}. 

Our computations indicate that $n_0=38$; it arises from the `super--polynomial' $f(z)$ in equation \eqref{superNewman} that was used to prove Theorem \ref{mainNewmanThm}.  In order to find it, all possible $\{0, 1\}$ polynomials of degree at most $40$ were generated by the method described in Section \ref{sec_gen}, avoiding repetitions of reciprocal as well as the self--reciprocal polynomials (the later are known to vanish on $\partial\DD$, \cite{BEFL, KovMat, Mer}). Then, for each generated $f(z)$, the minimum of values $\abs{f(e^{2\pi i j/m})}$, $0 \leq j \leq m-1$ was computed at $m=128$ sample points in double precision using the fast Fourier transform library \texttt{FFTW} \cite{FriJoh} (v.3.3.8); polynomials $f(z)$ with values $<2$ were filtered out. The choice of the parameter $m=128$ can be explained as the smallest power of $2$ that is $ \approx 4\deg{f}$: it provides a good balance between the high speed of the computation and the small number of surviving `false positive' cases with true minima dipping below $2$ somewhere in between of the sampling points. The filtering procedure for $n \leq 40$ took about 94.6 hrs to complete on our machine; degree $n=38$ alone took about $11$ hrs. The survivor polynomials were then filtered one more time through FFT by increasing $m$ to $32678$, until first Newman super--polynomial became apparent at $n=38$. It was then verified by solving for the critical points of its derivative $\partial{dt}\abs{f(e^{it})}^2/\partial{t}$, $t \in [0, 2\pi)$ and evaluating $\abs{f(e^{it})}$ at these points in \texttt{Sage}.

\subsection{Heuristic pattern search}\label{sec_heuristic}

Patterns that produce infinite sequences of polynomials $f(z) \in \NN_n$ or $\LL_n$ with prescribed values $N(f)=k$, listed in Tables \ref{patternsNewman2}, \ref{patternsNewman345}, \ref{patternsLittlewood2}, \ref{patternsLittlewood3to11}, were found using the heuristic search procedure that attempts to find a suitable extension to already known good pattern of small length.

We start with \emph{a seed polynomial} $f_{\mathrm{seed}}(z) \in \NN$ or $\LL$ with $N(f_{\mathrm{seed}})=k$ zeros in $\DD$.  We look for seed polynomials by searching the sets $\NN_n$, $\LL_n$ of small degree, typically $6 \leq n \leq 12$.

Let $u = a_0 a_1 \dots a_n$ be the word on alphabet $\mathcal{A}=\{0, 1\}$ (in the Newman case) or $\mathcal{A} =\{-1, 1\}$ (in the Littlewood case) that represents the seed polynomial $f_{\mathrm{seed}}(z)$, written as in equation \eqref{genForm}. Together with the word $u$, denote by $W$ the collection of words $w=w_1 w_2 \dots w_l$ of length $l=\abs{w} \leq L$ on $\mathcal{A}$. In our searches, we typically looked at $L=1, 2, 3, 4, 5, 6$ -- length parameters were selected according to which degree progressions $n \in \N$ were to be covered.

For every $w \in W$ and for every possible factorization of $u$ into the prefix and suffix  parts $u=u'u''$, where one of the $u'$, $u''$ is also allowed to be empty word, we try to insert $w$ in between $u'$ and $u''$. In this way, we obtain a set of \emph{new seed words}
\[
\mathcal{S}_{\mathrm{new}}:=\mathcal{S}_{\mathrm{new}}(u, W) = \{u'wu'': u'u''=u, w \in W\}.
\]
For a word $u_{\mathrm{new}} \in \mathcal{S}_{\mathrm{new}}$, we compute the polynomial $f_{\mathrm{new}}(z) \in \NN$ or in $\LL$ whose coefficients are represented by the word $u_{\mathrm{new}}$ as in equation \eqref{genForm}. If $N(f_{\mathrm{new}}) \ne k$, then $u_{\mathrm{new}}$ is removed from $\mathcal{S}_{\mathrm{new}}$. If $N(f_{\mathrm{new}}) = k$, then we consider $f_{\mathrm{new}}(z)$ to be the new seed polynomial, and perform the search procedure on $f_{\mathrm{new}}(z)$ recursively, in a depth--first order. This is repeated for each $u_{\mathrm{new}} \in \mathcal{S}_{\mathrm{new}}$. If the same seed polynomial is encountered for the second time during the search, it is rejected.

The recursion is continued until the length of a seed word $u_{\mathrm{new}}$  exceeds certain pre-determined length $\abs{u_{\mathrm{new}}} \geq	 N$ (we used values $N \leq 60$). Then, one tries to factor the word $u_{\mathrm{new}}=p r^m s$ into prefix $p$, repeating part $r$ and suffix $s \in \mathcal{A}$, where $p$ or $s$ can be empty, but $r$ is always non--empty and is chosen in such a way that the number of repetitions $m \geq 2$ is maximized.  In the case of the alphabet $\mathcal{A}=\{-1, 1\}$, the factorization is also attempted for the word $\overline{u}_{\mathrm{new}}=a_0 \overline{a_1}a_2 \overline{a_3}\dots$, where $\overline{a}_j := -a_j$, that corresponds to the polynomial $f_{\mathrm{new}}(-z) \in \LL_n$. It should be noted that these factorizations, in general, are not unique: only one of them is reported.

\begin{example} We give a simple example for the $k=2$ case.

We start with $f_{\mathrm{seed}}(z)=1+z+z^2+z^4+z^5$, $N(f_{\mathrm{seed}})=2$ that corresponds to $u_{\mathrm{seed}}=111011$. We take $W = \{0, 011\}$ and set the maximal word length to $N=40$.

By inserting from $W$ into $u_{\mathrm{seed}}$, and rejecting words that produce polynomials with $N(f_{\mathrm{new}}) \ne 2$, we find that $u_{\mathrm{seed}}$ has $3$ possible children:
\[
\mathcal{S}_{\mathrm{new}} = \{ 1110{\underline 0}11, 111 \underline{011}011, 11101{\underline 0}1\}.
\] Here the inserted word is underlined. There were two different ways of inserting $0$ into $111011$ to get $1110011$, either as the first $0$ or as the second $0$. Since these insertions produce the same word, it is treated as a single descendant.

By depth first search, we find that $u_{\mathrm{new}}= 1110011 \in \mathcal{S}_{\mathrm{new}}$ has only one child $1{\underline 0}110011$. This word, in turn, also has only one child, $1011{\underline 0}0011$. This again has only one child, $10110 \underline{011}0011$. This last word has no children, and the search at this particular branch terminates.

The search on $u_{\mathrm{new}}= 111011011$ produces the sequence of children: 
\[
111011011, \qquad 111011011011, \qquad 111011011011011, \dots.
\]
When the length of the child reaches $\geq N= 40$, the recursion stops and the last child word in the branch is factored to find the longest run of repetitions. In our case,
\[
111011011011011011011011011011011011011011 = 111(011)^{13}.
\]  
The pattern $111(011)^m$ is reported and the search backtracks to the remaining child in $\mathcal{S}_{\mathrm{new}}$.

In a similar way, the described algorithm determines that $1110101$ has no descendants of length $\geq N$.

It should be noted that word $11101001101$ appears twice: as the child of $11101001$ ($111010\underline{011}01$) or the child of $1110101101$ ($111010\underline{0}1101$). This repeating word and its children are examined only once and the second occurrence of it in the tree is pruned.

The search tree is illustrated in Figure \ref{fig:tree}. For each child $u_{\mathrm{new}}$, the location of the insert $w$ is underlined. The nodes in a box have no children and terminate that branch. There is one branch that does not appear to terminate and produces a pattern with repetitions that is listed in Table  \ref{patternsNewman2}. It will be further examined and validated, as described in Section \ref{sec_autoproof}.
\end{example}

\begin{sidewaysfigure}
\vspace{11cm}
\begin{tikzpicture}
[ level 1/.style={sibling distance=-1.7cm}]
\Tree [.111011 
        [.1110\underline{0}11
          [.1\underline{0}110011 
            [.1011\underline{0}0011
              [.\node[draw]{10110\underline{011}0011}; ] 
            ]
          ]
        ]
        [.111\underline{011}011
          [.111\underline{011}011011
            [.111\underline{011}011011011
              [.111\underline{011}011011011011
                [.111\underline{011}011011011011011
                  [.111\underline{011}011011011011011011
                    [.111\underline{011}011011011011011011011
                      [.111\underline{011}011011011011011011011011
                        [.111\underline{011}011011011011011011011011011
                          [.111\underline{011}011011011011011011011011011011
                            [.111\underline{011}011011011011011011011011011011011 $\vdots$ ] 
                          ] 
                        ] 
                      ] 
                    ] 
                  ] 
                ] 
              ] 
            ] 
          ] 
        ]
        [. 11101\underline{0}1
          [.11101\underline{0}01
            [.\node[draw]{111\underline{0}01001}; ]
            [.\node[draw]{111\underline{011}01001}; ] 
            [.\node[draw]{111010\underline{011}01}; ] 
            [.\node[draw]{11101001\underline{011}}; ] 
          ]
          [.11101\underline{011}01
            [.\node[draw]{111\underline{011}0101101}; ] 
          ]
        ]
      ]
\end{tikzpicture}
\caption{Search tree for $k = 2$, $u_{\mathrm{seed}}=111011$ and $W = \{0, 011\}$}
\label{fig:tree}
\end{sidewaysfigure}

\subsection{Proving $N(h)=k$ and $U(h)=0$}\label{sec_autoproof}

A computer--aided procedure was carried out in \texttt{Sage} to prove the formula for $N(h)=k$ and ensure $U(h)=0$ for each polynomial $h(z)$ in Tables \ref{patternsNewman2}, \ref{patternsLittlewood2}, \ref{patternsNewman345} and \ref{patternsLittlewood3to11}.

First, from a given pattern of coefficients of the form $uv^mv$ on alphabet $\mathcal{A}=\{0, 1\}$ or $\mathcal{A}=\{-1, 1\}$ that corresponds to the polynomial $h(z)$, one builds the rational function representation
\[
h(z) = \frac{f(z)/d(z)+z^{s+tm}g(z)/d(z)}{(1-z^t)/d(z)};
\] here $t=|v|$ denotes the length of the period in the pattern, $m$ is the number of repetitions of the period $v$, $s=|u|$ is the prefix length, and $d(z)$ is the polynomial g.c.d. of $f(z)$, $g(z)$ and $1-z^t$ that needs to be factored out. Expressions for $h(z)$ are provided in aforementioned tables.

After that, one applies Lemma \ref{ResolventLem} in order to verify that $\abs{f(z)} \geq \abs{g(z)}$ on $\partial \DD$ and to find all possible unimodular zeros of the numerator polynomial: these are the only points $z \in \partial \DD$, where $\abs{f(z)}=\abs{g(z)}$ can hold. For every pattern listed in Tables \ref{patternsNewman2}, \ref{patternsLittlewood2}, \ref{patternsNewman345} and \ref{patternsLittlewood3to11}, we computed resolvents $R(z)$. In all cases, the unimodular zeros $\zeta$ of resolvent polynomials had even multiplicities and their middle terms were positive. We found that unimodular zeros of $h(z)$ were roots of unity of small orders $\ord{\zeta} \in \{1, 2, 3, 4, 5, 6, 8, 24\}$. We determined all arithmetic progressions $m \pmod{\ord{\zeta}}$ in $\N$, were $h(\zeta)=0$ might have such a zero: for this, it suffices to check the unimodular factors of $h(z)$ for each non-negative integer $0 \leq m \leq 24$ and then apply the periodicity property of Proposition \ref{UnimodularProp}. We calculated critical exponents $m_0$, such that all unimodular zeros of the numerator are be simple for every $m > m_0$ by the method described in Lemma \ref{MultiplicityLem}. We verified that unimodular factors in the numerator cancel out with denominator $1-z^t$, leaving $U(h) = 0$ for $m > m_0$; if they do not cancel, we exclude all bad arithmetic progressions $m \pmod{ \ord{\zeta}}$ where $U(h) \ne 0$ from the table.

By Boyd's formula \ref{propBoyd}, $N(h)=N(f)$ holds from certain threshold $m > m_1$; we found that this value $m_1$ always coincided with the critical exponent $m_0$,  at which polynomial $h(z)$ had double unimodular zeros. In our case, critical exponents were small: $m_0=0$, $1$, $2$, $3$, $4$: they are listed in aforementioned tables. By computing $N(f)=k$ on the computer, we completed proving the formulas $N(h)=k$ for $m \geq m_0$.

 \subsection{Gray codes and normalization}\label{sec_gen}
 
 For Newman and Littlewood polynomials, one has $\abs{f(z)}=\abs{f^*(z)}$ when $z \in \partial\DD$. Also, $U(f^*)=U(f)$, $N(f^*)=\deg{f} - N(f)-U(f)$. Thus, it suffices to test only one member of the pair $f(z)$, $f^*(z)$ to determine the zero counts or absolute minima of both. As degrees become large, avoiding computations on the reciprocal cuts down the computation time almost by a half.

For Newman polynomials, this can be accomplished as follows: the coefficients of every $f(z) \in \NN_{n}$, $n \geq 1$ can be written as a pattern $1uvw^*1$, where $u$, $v$ and $w$ denote (possibly empty) binary words on the alphabet $\{0, 1\}$ of length $l=|u|=|w|=\lfloor(n-1)/2\rfloor$, $\abs{v} \leq 1$, and $w^*$ stands for the reflection of $w$. Let $G: [1, 2^l] \mapsto [0, 2^l-1]$ be \emph{binary reflected Gray code} \cite{Gr} of the length $l$ of an integer $i \in [1, 2^l]$. Since the function $G$ is bijective, one can always find integers $i$ and $j$ such that $u=G(i)$, $w=G(j)$, $i, j \in [1, 2^l]$. By looking only at words $u$ and $w$ where $i \leq j$, one produces exactly one of the polynomials $f(z)$, $f^*(z)$, but not the other, when $f(z) \ne f^*(z)$. If self-reciprocal polynomials are not desirable as well, then one should consider only $i < j$.

For $f \in \LL_n$, it is also possible to generate only one member from the pair $f(z)$, $f^*(z)$. However, it is more complicated;  as we did not reach degrees $n$ as in the Newman case, this was not attempted. Instead, the normalization $a_0 = a_1 = 1$ was performed to compute only one of the four polynomials $f(z)$, $f(-z)$, $-f(z)$, $-f(-z)$. This was accomplished by the patterns of the form $11u$, where $u=\widetilde{G}(i)$ is the binary word on $\{-1, 1\}$ obtained from Gray code $G(i)$ of $i$ by replacing all `$0$'s with `$-1$'s.

Gray codes speed up the computation as no additional inner loop to reset all coefficients is required -- only one is flipped at a time.  For polynomials in $\NN$ and $\LL$, Gray codes were  already widely used in computations by M.~J.~Mossinghoff and his coauthors \cite{BoMo2, BoHaMo, Mo1, Mo2}.

\subsection{Exhaustive computations}\label{sec_exhaustive}

The exhaustive computation of numbers $N(f)$ and $U(f)$ for every $f \in \NN_n$, $2 \leq n \leq 35$ and every $f \in \LL_n$, $2 \leq n \leq 31$ was carried out in order to:
\begin{enumerate}
 \item check if heuristic searches did not miss any regular sequences of Newman and Littlewood polynomials with $N(f)=2$ besides those listed in Table \ref{patternsNewman2}, \ref{patternsNewman345}, \ref{patternsLittlewood2}, \ref{patternsLittlewood3to11};
 \item identify sporadic cases  that do not fit any of the regular pattern;
 \item gather the data on the frequency distribution of $N(f)$, described in Section \ref{sec_stats}.
 \end{enumerate}
 We implemented the Bistritz algorithm \cite{Bis} for the \texttt{fmpq\_poly\_t} type from \texttt{FLINT} library \cite{Hart} v.2.5.2 in \texttt{C}. A \texttt{C} generator program invoked the Bistritz rule on Newman and Littlewood polynomial produced as outlined in Section \ref{sec_gen}. We were quick to find out that it is highly inefficient to store the values of $N(f)$, $U(f)$  and coefficients of generated polynomials in text: as $n$ approached $30$, the required disk space exceeded 100 GB. As a remedy, we implemented buffered binary data files, where the values of $N(f)$ and $U(f)$ are packed as two byte--size records in one large array; the position of the record corresponds to the order in which the polynomial $f(z)$ was generated sequence. After finishing the computations of $N(f)$ and $U(f)$ for each degree, our \texttt{C} program also calculated the statistics and stored it in a separate text file. To extract specific polynomials for post-processing, we wrote another \texttt{C} program that loops over the computed binary data file (in the same order as they were generated) and outputs polynomials with required $N(f)$ and $U(f)$ numbers to a separate text file.  In the Newman case, the combined size of the binary data files for degrees $2 \leq n \leq 35$ was $32$ GB ($16$ GB file for degree $n=35$ alone); in the Littlewood case, $4$ GB of data files for $2 \leq n \leq 31$ ($2$ GB for the last degree $n=31$) were generated. The total computation time in the Newman case was $389.8$ CPU hours ($202.5$ hrs for the last degree $n=35$); the corresponding time in the Littlewood case for all degrees was $66.4$ hrs, for $n=31$ it took $45$ hrs.
 
\subsection{Software and hardware}

Small degree computations (depending on the task, for degrees $n \leq 15$ or $\leq 18$) and the post--processing (statistics calculations in Section \ref{sec_stats} or the verification procedure in Section \ref{sec_autoproof}) were done in \texttt{Sage} v.8.1. \cite{Sage} and \texttt{Maple}. Heuristic pattern searches were accomplished on \texttt{Maple}\texttrademark \cite{WMaple}\footnote{Maple is a trademark of Waterloo Maple Inc.}. Larger degree computations were coded in \texttt{C}, compiled and built by \texttt{gcc} v.8.3.0. on \texttt{MinGW-w64} \cite{mingw64} toolchain provided by \texttt{MSYS2} \cite{msys2}. The first named author worked on his 2.2GHz \texttt{Intel Core i7} 8GB 1600Mhz DDR3 laptop and the 3.8GHz \texttt{Intel Core i5} 16 GB 2400 MHz DDR4 desktop machine. The second named author performed his computations on his personal 2.2Ghz \texttt{Intel i5-5200} $4$--core Windows 10 laptop with 1600MHz 8GB DDR3 RAM.

\section{Acknowledgments}

We would like to thank M.~J.~Mossinghoff for references to the earlier work \cite{BLS, G1, G2} related to our subject. Michael pointed to the polynomial that can be used to prove the $\{0, 1\}$--admissibility of pairs $(5,  2n+1), n \in\N$ and provided few other random small degree examples in the Newman case.

The authors have benefited from discussions during 2015 \emph{Algebraic numbers and polynomials} event at Banf International Research Station (BIRS) and \emph{Numeration} conferences at Universit\'e  Paris Diderot -- Paris 7 in 2018 and Erwin Schr\"{o}dinger Institute in Vienna in 2019.

\end{document}